  \documentclass[10pt, a4paper ] {article}
  \pdfoutput=1
  
\usepackage{amsmath,amsthm,amssymb}
\usepackage{graphicx,color}
\usepackage{dsfont}
\usepackage{float}
\usepackage{caption}
\usepackage{subcaption}
\usepackage[]{algorithm2e}

\usepackage{makecell}
\usepackage{gnuplottex}
\usepackage{pgfplots}

\usepackage{tikz} 
\usepackage{pgfplots}
  
  
\usepackage{accents}

  \newcommand{\overbar}[1]{\mkern 1.5mu\overline{\mkern-1.5mu#1\mkern-1.5mu}\mkern 1.5mu}

  \newcommand{\bproof}{\begin{proof}}
      \newcommand{\eproof}{\end{proof}}

    \newcommand{\ben}{\begin{equation}}
    \newcommand{\een}{\end{equation}}

    \newcommand{\benn}{\begin{equation*}}
    \newcommand{\eenn}{\end{equation*}}

    \newcommand{\R}{\mathbf{R}}
    \newcommand{\N}{\mathbf{N}}

    \newcommand{\divv}{\operatorname{div}}

    \newcommand{\vb}{\mathbf{v}}

    \newcommand{\lb}{[\![}
    \newcommand{\rb}{]\!]}
    \newcommand{\Hrep}{\mathcal H(\mathcal X,\R^d)}
    \newcommand{\VN}{\mathcal V_N(\mathcal X,\R^d)}

    \newcommand{\gb}{\mathbf{g}}

    \newcommand{\dt}{{\frac{d}{dt}}}

    \newcommand{\tr}{\operatorname{tr}}

    \newcommand{\Sb}{\mathbf{S}}
    \newcommand{\Sf}{\mathfrak{S}}

    \newcommand{\nf}{\nu}
    
     \newcommand{\ac}{\accentset{\circ}}
    
    \theoremstyle{definition}
    \newtheorem{thrm}{Theorem}[section]
    
    \newtheorem{theorem}[thrm]{Theorem}
    \newtheorem{lemma}[thrm]{Lemma}
    \newtheorem{corollary}[thrm]{Corollary}
    \newtheorem{remark}[thrm]{Remark}
    
    \newtheorem{definition}[thrm]{Definition}
    
    \newtheorem{assumption}[thrm]{Assumption}

    \newtheorem{example}[thrm]{Example}



    
    \numberwithin{equation}{section}

    \frenchspacing
    
    \textwidth=155mm
    \textheight=23cm
    \parindent=16pt
    \oddsidemargin=-0.5cm
    \evensidemargin=-0.5cm
    \topmargin=-0.5cm
    
\providecommand{\keywords}[1]{\textbf{\textit{Keywords: }} #1}
\providecommand{\AMS}[1]{\textbf{\textit{AMS classification: }} #1}

    

   \DeclareMathOperator{\si}{sinc}

    \setlength{\jot}{2pt}

\begin{document}

\newpage

\title{Reproducing kernel Hilbert spaces and variable metric algorithms in PDE constrained shape optimisation}
\author{M. Eigel\thanks{ Weierstrass-Institute,
        Mohrenstr. 39,
        10117 Berlin Germany,
        martin.eigel@wias-berlin.de } \;          
    and K. Sturm \thanks{
        Universit\"at Duisburg-Essen,
        Fakult\"at f\"ur Mathematik,
        Thea-Leymann-Str. 9,
        45127 Essen,
        Germany,
        kevin.sturm@uni-due.de}
}
\date{} 
    \maketitle
    
    \begin{abstract}
In this paper we investigate and compare different gradient algorithms 
designed for the domain expression of the shape derivative. Our main focus is 
to examine the usefulness of kernel reproducing Hilbert spaces for PDE 
constrained
shape optimisation problems. We show that radial kernels 
provide convenient formulas for the shape gradient that can be efficiently 
used in numerical simulations. 
The shape gradients associated with radial kernels depend on a so called smoothing parameter that allows a 
smoothness  adjustment of the shape during the optimisation process. 
Besides, this smoothing parameter can be used to modify the
movement of the shape. 
The theoretical findings are verified in a number of numerical 
experiments. 
    \end{abstract}
    
    \keywords{
        shape optimization; reproducing kernel Hilbert spaces; gradient method; variable metric; radial kernels
    }
    
    \AMS{
        35J15; 46E22; 49Q10; 49K20; 49K40
    }

\section{Introduction}
\label{sec:introduction}
Optimal shape design questions naturally arise from problems
in the engineering sciences and industrial applications. For instance, it plays 
an important role in
aircraft design, electrical impedance tomography, 
cantilever designs, inductor coil design and many more. 
The main objective of shape optimisation is to minimise a certain cost/shape 
function depending on one or many design variables.
A great challenge, relevant for applications, 
is to find fast and efficient algorithms providing as output (locally) optimal shapes. One may define first and second order methods by means of the so called shape 
derivative.
\\[0.2cm]
A central result of shape optimisation constitutes the \emph{structure theorem} 
for shape functions defined on open or closed subsets of the Euclidean space.
As a consequence of the structure theorem we can identify, in smooth situations, the shape derivative with a distribution on the boundary only depending on 
normal perturbations. In many applications this distribution can 
be written as boundary integral which is referred to as \emph{boundary expression}. 
If the shape is not smooth enough one still can conclude that the shape derivative is concentrated on the 
boundary, but it may not necessarily be a distribution on the boundary anymore. 
However, for many application problems, a weaker form of the 
shape derivative is usually available. This form can be referred to as \emph{volume/domain expression} or \emph{distributed shape derivative} and
it can be written in a convenient tensor form as 
detailed in \cite{lauraindistributed}. 
\\[0.2cm]
By definition the \emph{shape gradient} of the shape derivative depends 
on the choice of the Hilbert space and inner product. It is nothing but the Riesz representation 
of the shape derivative in this Hilbert space. Using the boundary expression of the 
shape derivative has the advantage that it allows to resort to boundary spaces.
For PDE constrained optimal design problems, many gradient-type algorithms using the boundary 
expression in conjunction with boundary spaces have been proposed by employing various explicit 
parametrisations such as B\'ezier splines, B-splines, NURBS; see e.g. \cite{MR2887931,SturmHoemHint13,MR2299620,MR1939127,ls_algo_sensi,MR1911658}. While the boundary 
expression gives a relatively easy formula of the shape derivative, it is
not the first choice from the numerical point of view as recently pointed out in 
\cite{MR3348199,lauraindistributed,MR2642680}. By definition the shape gradient depends
on the choice of the Hilbert space where the shape derivative is represented.
While some choices using $H^1$ metrics and finite elements have
successfully been used \cite{lauraindistributed,MR3436555}, the question arises if there are
better Hilbert spaces and metrics that are more controllable. At best, one might want to change the 
metric during the optimisation process in order to escape stationary points 
that are no global minima.  
\\[0.2cm]
Reproducing kernel Hilbert spaces (RKHSs) were introduced in the beginning of 
the 19th century. They play a crucial role in polynomial approximation and machine learning.
We refer to \cite{MR2131724} for an introduction to  RKHS and their 
application to scattered data approximation. 
RKHS can be extended to vector valued reproducing kernel Hilbert spaces (vvRKHS).
As  shown in  \cite{MR2656312} they can also efficiently be used to solve
diffeomorhpic matching problems. A specific property of vvRKHS is that the point 
evaluation on them is a continuous linear mapping. Conversely, the continuity
of the evaluation mapping in a Hilbert space implies that it is a vvRKHS. The continuity of the 
evaluation mapping is also necessary to build complete metric groups of diffeomorphisms
as demonstrated in \cite[Chap. 4]{MR2731611}. This shows that there is a close 
relation between RKHS and shape design problems.  Therefore, it seems natural to  
combine and examine results from RKHS theory with problems from 
PDE constrained shape optimisation.  
\\[0.2cm]
In this paper we examine the usefulness of reproducing kernel Hilbert spaces 
in the context of PDE constrained shape optimisation problems. 
We combine the generic tensor form of the domain expression of the shape 
derivative with reproducing kernel Hilbert space methods. We provide ready to use 
explicit formulas for the shape gradient in these kernel spaces and compare them with previously used ones.
Moreover, we study radial kernels that allow us to construct flows that
can efficiently detect stationary points. Our theoretical results 
are verified by several numerical experiments. 
\\[0.3cm]
\subsubsection*{Structure of the paper}
\hspace*{0.1em}\\
In Section~\ref{sec:preliminary}, we review basic results from shape calculus and 
recall the recently introduced tensor representation of the shape derivative. 
We recall the definition of the gradient of the shape derivative and define
descent directions. 

In Section~\ref{sec:kernel}, we introduce the theory of reproducing 
kernel Hilbert spaces and relate them to the shape derivative. Explicit 
formulas of gradients in general reproducing kernel Hilbert spaces are obtained 
that can be readily used in 
numerical algorithms. The general results are specialised to 
radial kernels and the relation. At the end of the section, different
approaches to obtain descent directions are proposed and compared.

In Section~\ref{sec:model_problems}, a transmission problem together with a 
tracking-type cost function is studied. 
We give a detailed description of the 
discretisation of the PDE and of the shape derivative.  In 
a general tensor setting we compare the discrete domain and boundary expression. 

In Section~\ref{sec:experiments}, the previously introduced methods are 
tested in a number of numerical experiments. 

\section{Preliminaries}\label{sec:preliminary}
\setcounter{equation}{0}
In this section, we recall some basics from shape calculus. For an in-depth 
treatment we refer the reader to the 
monographs \cite{MR2731611,SokoZol,HenPie05}. Numerous examples of 
PDE constrained shape functions and their shape derivatives can be found in \cite{sturm2015shape}. 

\subsection{Flow of vector fields and shape derivative}
Subsequently, let $D\subset \R^d$, $d\ge 1$, be an open and bounded set.
Given a function $X\in \ac C^{0,1}(\overbar{D},\R^d)$, we denote by $\Phi_t$ the flow of $X$ 
(short $X$-flow) given by $\Phi_t(x_0) := x(t)$, where $x(\cdot)$ solves
\ben\label{eq:flow}
x'(t) = X(x(t)) \qquad\textsf{ in }\qquad
(0,\tau], \; x(0)=x_0.	
\een  
The space $\ac C^{0,1}(\overbar{D}, \R^d)$ comprises all bounded and Lipschitz 
continuous functions on $\overbar{D}$ vanishing on $\partial D$. 
Note that by the chain rule $\partial\Phi^{-1}(t,\Phi(t,x))=(\partial \Phi(t,x))^{-1}$ which we will often write as
\begin{equation}
(\partial (\Phi^{-1}_t))\circ \Phi_t=(\partial \Phi_t)^{-1}=:\partial \Phi^{-1}_t.
\end{equation}
By $\ac C^k(\overbar D,\R^d)$ we denote the subspace of $k$-times continuously 
differentiable functions on $\overbar D$ vanishing on $\partial \Omega$. 
For open and bounded sets $\Omega\subset \R^d$ and for all 
finite integers $p\ge 1$ and $k\ge 1$, we define the Sobolev space 
$\ac{W}^k_p(\Omega,\R^d) = \overline{C^\infty_c(\Omega,\R^d)}^{\|\cdot\|_{W^k_p(\Omega,\R^d)}}$. 
Moreover, for all  open and bounded sets $\Omega\subset \R^d$ with Lipschitz 
boundary $\partial \Omega$, we define 
$W^k_p(\Omega,\R^d) = \overline{C^\infty(\overbar{\Omega},\R^d)}^{\|\cdot\|_{W^k_p(\Omega,\R^d)}}.$ 
As usual in case $p=2$ we set $H^k(\Omega,\R^d):=W^k_2(\Omega,\R^d)$ and $\ac{H}^k(\Omega,\R^d):=\ac{W}^k_2(\Omega,\R^d)$.

\begin{definition}\label{def1} 
    Let $D\subset \R^d$ be an open set and $J:\Xi\subset \wp(D) \rightarrow \R$ a shape function defined on subsets
    of $D$. We denote by $\wp(D)$ the power set of $D$. Let $\Omega\in \Xi$ and $X \in C^k(\overbar D,\R^d)$, $k\ge 1$, be such that 
    $\Phi_t(\Omega) \in \Xi$ for all 
    $t >0$ sufficiently small.   
    Then the  \emph{Eulerian semi-derivative} of $J$ at $\Omega$ in direction $X$ is defined by
    \ben
    dJ(\Omega)(X):= \lim_{t \searrow 0}\frac{J(\Phi_t(\Omega))-J(\Omega)}{t} .
    \een
    \begin{itemize}
        \item[(i)] The function $J
        $ is said to be \textit{shape differentiable} at $\Omega$ 
        if for some $k\ge 1$ the Eulerian semi-derivative $dJ(\Omega)(X)$ 
        exists for all $X \in \ac C^k(\overbar D,\R^d)$ and 
        $ X     \mapsto dJ(\Omega)(X) $ 
        is linear and continuous on $\ac C^k(\overbar D,\R^d)$.
        \item[(ii)]  The smallest integer  $k\ge 0$ for which 
        $X \mapsto dJ(\Omega)(X)
        $ is continuous with respect to the 
        $ C^k(\overbar D,\R^d)$-topology is called the order of $dJ(\Omega)$. 
    \end{itemize}
\end{definition}

An important result of shape optimisation constitutes the so-called 
\emph{structure theorem} that gives a characterisation of shape derivatives in 
open or closed sets $\Omega$. When the boundary of $\Omega$ admits 
some regularity and the shape derivative is a distribution of certain order,
then the structure theorem tells us that the derivative depends only on normal perturbations.
\begin{theorem}\label{thm:structure}
    Let $J:\Xi\subset \wp(D) \rightarrow \R$ be a shape function and 
    $\Omega \subset \Xi$ open or closed with $C^{k+1}$ boundary $\Gamma$. 
    Suppose that $J$ is shape differentiable at $\Omega$ and that it is 
    of order $k$. Then there exists a scalar distribution 
    $g(\Omega)\in (C^k(\Gamma))^*$ such that
    \ben\label{eq:structure_g}
    dJ(\Omega)(X)=\langle g(\Omega), X\cdot \nu\rangle_{(C^k(\Gamma))^*,C^k(\Gamma)}\quad \textsf{ for all } X\in C^k_c(D, \R^d). 	
    \een
\end{theorem}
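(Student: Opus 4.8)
The plan is to prove the structure theorem in two stages: first establish that the Eulerian semi-derivative annihilates every field whose normal component vanishes on $\Gamma$, and then use this to factor $dJ(\Omega)$ through the normal trace $X\mapsto X\cdot\nu|_\Gamma$, identifying the resulting functional as the sought distribution $g(\Omega)$.

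The key lemma, and the heart of the argument, is the following invariance claim: if $X\in C^k_c(D,\R^d)$ satisfies $X\cdot\nu=0$ on $\Gamma$, then $\Phi_t(\Omega)=\Omega$ for all sufficiently small $t$, and hence $dJ(\Omega)(X)=0$. To see this I would exploit that such an $X$ is tangent to $\Gamma$ along $\Gamma$. Flattening the boundary locally by a $C^{k+1}$ chart that sends $\Gamma$ into a hyperplane $\{y_d=0\}$, the tangency condition becomes the vanishing of the last component of the pushed-forward field on that hyperplane; the corresponding scalar equation $\dot y_d = Y_d(y)$ then has $y_d\equiv 0$ as a solution through any boundary point, and uniqueness of the flow (guaranteed by $X\in C^1$, since $k\ge 1$) forces the whole trajectory to remain on $\Gamma$. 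Thus $\Phi_t(\Gamma)=\Gamma$, and since $\Phi_t$ is, for small $t$, a $C^0$-small perturbation of the identity fixing $\Gamma$ setwise, it maps the open set $\Omega$ onto itself. Consequently $J(\Phi_t(\Omega))=J(\Omega)$ for small $t$ and the difference quotient defining $dJ(\Omega)(X)$ vanishes identically. Note that this single statement already subsumes the case of fields supported in the interior or exterior, for which $X$ vanishes near $\Gamma$.

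With the lemma in hand, the factoring step is pure linearity. If $X_1,X_2\in C^k_c(D,\R^d)$ have the same normal trace, i.e. $X_1\cdot\nu=X_2\cdot\nu$ on $\Gamma$, then $(X_1-X_2)\cdot\nu=0$ on $\Gamma$, so $dJ(\Omega)(X_1)=dJ(\Omega)(X_2)$. Hence $dJ(\Omega)(X)$ depends only on the scalar field $\zeta:=X\cdot\nu\in C^k(\Gamma)$. To turn $\zeta\mapsto dJ(\Omega)(X)$ into a bona fide element of $(C^k(\Gamma))^*$ I would construct a bounded linear extension operator $E\colon C^k(\Gamma)\to C^k_c(D,\R^d)$ with $(E\zeta)\cdot\nu=\zeta$ on $\Gamma$; a natural choice is $E\zeta=\chi\,\tilde\zeta\,\tilde\nu$, where $\tilde\nu$ is a $C^k$ extension of the unit normal (which is $C^k$ precisely because $\Gamma$ is $C^{k+1}$), $\tilde\zeta$ is a $C^k$ extension of $\zeta$, and $\chi$ is a cutoff supported near $\Gamma$ inside $D$. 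Setting $\langle g(\Omega),\zeta\rangle:=dJ(\Omega)(E\zeta)$ and combining the continuity of $dJ(\Omega)$ in the $C^k$-topology with the boundedness of $E$ gives $|\langle g(\Omega),\zeta\rangle|\le C\,\|\zeta\|_{C^k(\Gamma)}$, so $g(\Omega)\in (C^k(\Gamma))^*$ and \eqref{eq:structure_g} holds by well-definedness of the construction.

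I expect the main obstacle to be the invariance lemma, specifically the passage from pointwise tangency on $\Gamma$ to genuine flow-invariance of $\Gamma$ and then of $\Omega$; this is where the regularity hypotheses enter, since one needs uniqueness of the flow ($k\ge 1$) together with a $C^{k+1}$ boundary both to make $\nu$ a genuine $C^k$ field and to supply the local straightening chart. The extension-operator construction is the second delicate point, again resting on $\Gamma\in C^{k+1}$, but it is routine once the normal is known to be $C^k$.
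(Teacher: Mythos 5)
The paper itself gives no proof of this theorem: it refers the reader to the monograph \cite{MR2731611}, and the argument you have written is essentially the standard proof found there (tangential fields generate flows that leave $\Omega$ invariant, hence lie in the kernel of $dJ(\Omega)$; then $dJ(\Omega)$ factors through the normal trace, made into an element of $(C^k(\Gamma))^*$ by a bounded extension operator, which exists because $\Gamma\in C^{k+1}$ makes $\nu$ a $C^k$ field). So your route is correct and coincides with the one the paper delegates to the literature.

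Two steps deserve tightening, though neither is a fatal gap. First, in the straightened chart the object that solves the ODE is the full curve $t\mapsto (y'(t),0)$ with $\dot y' = Y'(y',0)$, which is a solution of the whole system precisely because $Y_d(\cdot,0)=0$; the scalar relation $\dot y_d = Y_d(y)$ does not by itself have $y_d\equiv 0$ as ``a solution'' independently of the other components. Uniqueness (available since $k\ge 1$) then forces trajectories starting on the hyperplane to remain there, which is what you intended. Second, your justification that $\Phi_t(\Omega)=\Omega$ via ``a $C^0$-small perturbation of the identity fixing $\Gamma$ setwise'' is not conclusive as stated: $\Omega$ need not be connected, and an open set is not determined by its boundary. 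The clean argument is again uniqueness: since $\Gamma$ is flow-invariant, no trajectory starting off $\Gamma$ can ever reach $\Gamma$ (run the flow backwards to get a contradiction), so every trajectory stays in its connected component of $D\setminus\Gamma$; as $\Omega$ with $\partial\Omega=\Gamma$ is a union of such components, $\Phi_t(\Omega)=\Omega$ for all $t$ for which the flow exists, with no smallness of $t$ needed. With these repairs, the factorization step and the extension operator $E\zeta=\chi\,\tilde\zeta\,\tilde\nu$ (bounded from $C^k(\Gamma)$ to $C^k_c(D,\R^d)$ via the nearest-point projection onto the $C^{k+1}$ surface $\Gamma$) complete the proof exactly as you describe.
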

For a proof of the previous theorem we refer the reader to \cite{MR2731611}.

\subsection{Tensor representation of the shape derivative}
In the recent work \cite{lauraindistributed}, a generic tensor form 
of the shape derivative was proposed. Our further investigation 
benefits from this tensor form as it allows us to obtain convenient formulas of 
shape gradients and it helps to distinguish the discretised and
non-discretised shape derivative. 
\begin{definition}\label{def:tensor}
    Let $\Omega \in \wp(D)$ be a set with $C^1$ boundary. 
    Assume $J$ is shape differentiable at $\Omega$ 
    and that its shape derivative $dJ(\Omega)$ is of order $k=1$.  
    We say that the shape derivative of $J$ admits a \emph{tensor representation} at $\Omega$. If
    there exist tensors $\Sb_1\in  L_1(D, \R^{d,d})$, $\Sb_0\in L_1(D,\R^d)$  and 
    $\Sf_1\in L_1(\partial \Omega; \R^{d,d})$, 
    $\Sf_0\in L_1(\partial \Omega, \R^d)$
    such that for $X\in \ac C^1(\overbar D,\R^d)$,
    \ben\label{ea:volume_from}
    dJ(\Omega)(X) =\int_D \Sb_1: \partial X + \Sb_0\cdot 
    X \,dx + \int_{\partial \Omega} \Sf_1: \partial_\Gamma 
    X + \Sf_0 \cdot X\, ds,
    \een
    where $\partial_\Gamma X : = \partial X -(\partial X n)\otimes n$ is the 
    tangential derivative of $X$ along $\partial\Omega$. 
\end{definition}
\begin{remark}
    \begin{itemize}
        \item The functions $\Sb_0,\Sb_1$ and $\Sf_0,\Sf_1$ depend on the domain 
        $\Omega$.  When necessary, we explicitly express the dependence 
        of $\Sb_0,\Sb_1$ and $\Sf_0,\Sf_1$ on $\Omega$ by writing 
        $\Sb_0(\Omega),\Sb_1(\Omega)$ and $\Sf_0(\Omega),\Sf_1(\Omega)$, respectively. 
        \item The tensor representation \eqref{ea:volume_from} is not 
        unique.  In fact in Example~\ref{ex:extension} below  
        we show that one can obtain different tensor 
        representations of the same shape derivative by choosing 
        different inner products under the assumption that $dJ(\Omega)(\cdot)$ belongs
        to some Hilbert space. 
        \item When $dJ(\Omega)
        $ admits a tensor representation of order one, then $dJ(\Omega)(X)$ 
        naturally extends to vector fields $X\in W^1_\infty(D,\R^d)$ 
        by means of the right hand side of \eqref{ea:volume_from}.  
    \end{itemize}
\end{remark}
\begin{example}\label{ex:shape_func_f}
    As an example we consider
    an open subset $D\subset \R^d$ and the shape function $J(\Omega):=\int_D f_\Omega\, dx$ with $f_\Omega := f_1 \chi_\Omega + 
    f_2 \chi_{D\setminus \Omega}$,  $f_1,f_2\in C^1_c(\R^d,\R^d)$.  Then 
    $J$ is shape differentiable in all measurable subsets $\Omega\subset D$ and 
    the shape derivative in direction $X\in \ac C^1(\overbar D,\R^d)$ is given by 
    \ben
    dJ(\Omega)(X) = \int_D \Sb_1(\Omega):\partial X + \Sb_0(\Omega)\cdot X\;dx,	
    \een
    where 
    $$ \Sb_1(\Omega) := f_\Omega I, \quad \Sb_0(\Omega) := \chi_\Omega \nabla f_1 + \chi_{D\setminus \Omega} \nabla f_2.	$$
    Hence, in this case $\Sf_0(\Omega)=0$ and $\Sf_1(\Omega)=0$. 
    We refer the reader to \cite{lauraindistributed} for more examples of 
    shape derivatives admitting a tensor representation. 
\end{example}
\begin{example}\label{ex:extension}
    Let $J$ be a shape function such that 
    $dJ(\Omega)(X)$ is well-defined for all $X$ in  $\ac C^1(\overbar D,\R^d)$ and assume that it can be extended to a functional 
    $\widetilde{dJ}(\Omega)$ on $\ac{H}^1(D,\R^d)$.  Then 
    Riesz representation theorem states that there is a unique $\gb_\Omega \in \ac{H}^1(D, \R^d)$ such that
    \ben\label{eq:inner}
    \widetilde{dJ}(\Omega)(X) = \int_D \partial \gb_\Omega : \partial X + \gb_\Omega \cdot X \, dx \quad \textsf{ for all } X \in \ac{H}^1(D, \R^d).	
    \een
    Restricting $\widetilde{dJ}(\Omega)$ to smooth vector fields in $\ac C^1(\overbar D,\R^d)$, we recover formula 
    \eqref{ea:volume_from} with $\Sb_1:=\partial g_\Omega, \Sb_0:=g_\Omega$, 
    $\Sf_0=0$ and $\Sf_1=0$. 
    Of course instead of using the inner product on the right hand side of 
    \eqref{eq:inner} one could alternatively solve: find 
    $\tilde g_\Omega\in \ac H^1(D,\R^d)$ so that
    $$ \widetilde{dJ}(\Omega)(X) = \int_D \partial \tilde\gb_\Omega : \partial X \, dx \quad \textsf{ for all } X \in \ac{H}^1(D, \R^d),	$$
    then we get a different tensor form with $\Sb_1:=\partial \tilde g_\Omega, \Sb_0:=0$, 
    $\Sf_0=0$ and $\Sf_1=0$. 
\end{example}
Example~\ref{ex:extension} suggests to investigate shape functions with shape derivatives of 
order $k=1$ having a tensor representation of the form
\begin{equation}\label{eq:domain} 
dJ(\Omega)(X) = \int_D \Sb_1 : \partial X + \Sb_0\cdot X \, dx, \quad 
X\in \ac C^1(\overbar D,\R^d),
\end{equation} 
where $\Sb_1 \in L_1(D, \R^{d,d})$ and $\Sb_0 \in L_1(D, \R^d)$.

Under the assumption $\Sb_1\in W^1_1(D,\R^{d,d})$ we readily recover (cf. \cite[Prop. 3.3]{lauraindistributed}) the so-called boundary expression from 
\eqref{eq:domain}  
\begin{equation}\label{eq:boundary}
dJ(\Omega)(X) = \int_{\partial \Omega} \lb \Sb_1\nu\cdot \nu\rb \, X\cdot \nf 
\, ds, \quad X\in \ac C^1(\overbar D,\R^d),
\end{equation}
where $\lb\Sb_1\nu\cdot \nu\rb:= (\Sb_1^+-\Sb_1^-) \nf \cdot \nf$ denotes the 
jump of $\Sb_1$ across $\Gamma$ and  $\nu$ is the outward-pointing unit vector 
field along $\Gamma$. This formula is in accordance 
with~\eqref{eq:structure_g} of Theorem~\ref{thm:structure}. The 
$\pm$ indicates the restriction of the function to $\Omega^\pm$, respectively, for example 
$\Sb_1^\pm := (\Sb_1)|_{\Omega^\pm}$ with $\Omega^+:=\Omega$ and 
$\Omega^-:=D\setminus \Omega$. Here 
the involved tensor fields additionally satisfy the conservation equations
\ben \label{eq:equvilibrium_strong}
\begin{split}
    -\divv(\Sb_1^+) + \Sb_0^+ &= 0 \quad \textsf{ in } \Omega \\
    -\divv(\Sb_1^-) + \Sb_0^- &= 0 \quad \textsf{ in } D\setminus\overbar{\Omega}.
\end{split}
\een
Note that if the boundary $\partial \Omega$ is irregular, say $\Omega$ is 
merely bounded and open, formula \eqref{eq:domain} and \eqref{eq:boundary} 
are not equivalent. In fact, in this case  \eqref{eq:boundary} is in general 
not well-defined. 

It is important to notice that after discretisation the equality of \eqref{eq:domain} and \eqref{eq:boundary} 
breaks down as pointed out in \cite{MR3348199}.
For a numerical and theoretical 
comparison of the boundary and domain expression we refer to \cite{MR2642680,MR3348199,lauraindistributed}.

\subsection{Shape gradients and descent directions}\label{sec:gradient-flow}
Let $D\subset \R^d$ be an open set and $\Xi\subset \wp(D)$ a subset of the powerset of $D$.
Consider a shape function $J:\Xi \rightarrow \R$ that is shape differentiable at $\Omega\in \Xi$. 
Suppose there is a Hilbert space $\mathcal H(\mathcal X,\R^d)$ of functions from 
$\mathcal X$ into $\R^d$ and assume $dJ(\Omega) \in \mathcal H(\mathcal X,\R^d)^*$. 
\begin{definition}\label{def:H_gradient}
    \begin{itemize}
        \item[(i)] 
        The gradient of $J$ at $\Omega$ with respect to the space 
        $\mathcal{H}(\mathcal X,\R^d)$ and the inner product $(\cdot, \cdot )_{\mathcal{H}(\mathcal X,\R^d)}$,
        denoted $\nabla J(\Omega)$, is defined by
        \begin{equation}\label{eq:H_gradient}
        dJ(\Omega)(X)=( \nabla J(\Omega), X)_{\mathcal{H}(\mathcal 
            X,\R^d)} \; \textsf{ for all } X\in \mathcal{H}(\mathcal X,\R^d).
        \end{equation}
        We also call $\nabla J(\Omega)$ the $\mathcal H(\mathcal X,\R^d)$-gradient of $J$ 
        at $\Omega$.
    \end{itemize}
    \begin{remark}
        The Hilbert space $\mathcal H(\mathcal X,\R^d)$ may be equipped with 
        different scalar products $(\cdot, \cdot )_{\mathcal{H}(\mathcal 
            X,\R^d)}$ yielding the same topology on 
        $\mathcal H(\mathcal X,\R^d)$. 
    \end{remark}
\end{definition}
\begin{example}
    Consider the shape function $J
    $ from Example~\ref{ex:shape_func_f}. Let $\Omega\in D$ be open and set $\mathcal X:=D$. Then it is easy to see that $dJ(\Omega)$ belongs 
    to $\mathcal H(D,\R^d):=H^1_0(D,\R^d)$. The gradient $ \nabla J(\Omega)$ 
    with respect to the metric
    $ (\varphi,\psi)_{\ac{H}^1} := \int_D \partial \varphi : \partial \psi \; dx  $  
    is then defined as the solution of
    $$ ( \nabla J(\Omega),  X)_{\ac H^1} = \int_D \Sb_1(\Omega):\partial X + \Sb_0(\Omega) \cdot X \, dx \quad \textsf{ for all } X \in H^1_0(D,\R^d).	$$	
\end{example}
As shown by the next lemma, the negative gradient is nothing 
but the steepest descent direction for the shape derivative.
\begin{lemma}
    Let $J$ and $\mathcal H = \mathcal H(\mathcal X,\R^d)$ be as in 
    Definition~\ref{def:H_gradient} and suppose $dJ(\Omega)\ne 0$.
    Then there exists a unique $\gb_\Omega\in \mathcal H(\mathcal X,\R^d)$ with norm equal to one, satisfying
    $$ \min_{\substack{\vb\in \mathcal H\\ \|\vb\|_{\mathcal H}=1}} dJ(\Omega)(\vb) =  dJ(\Omega)(\gb_\Omega), 	$$
    where $\gb_\Omega$ is given by
    $ \gb_\Omega := - \nabla J(\Omega)/\|\nabla J(\Omega)\|_{\mathcal H(\mathcal X,\R^d)}. $
\end{lemma}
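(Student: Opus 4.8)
The plan is to reduce the statement to the defining property of the gradient together with the Cauchy--Schwarz inequality. By Definition~\ref{def:H_gradient}, the linear functional $dJ(\Omega)$ coincides on $\mathcal H$ with the inner product against $\nabla J(\Omega)$, so the first step is simply to rewrite
\begin{equation*}
dJ(\Omega)(\vb) = (\nabla J(\Omega), \vb)_{\mathcal H} \quad \textsf{ for all } \vb\in \mathcal H .
\end{equation*}
This turns the constrained minimisation problem over the unit sphere $\{\vb\in\mathcal H : \|\vb\|_{\mathcal H}=1\}$ into the problem of minimising $(\nabla J(\Omega), \vb)_{\mathcal H}$ over the same sphere, which is entirely a Hilbert space question and no longer involves the shape calculus.

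Next I would observe that since $dJ(\Omega)\ne 0$, the Riesz representation forces $\nabla J(\Omega)\ne 0$, so $\|\nabla J(\Omega)\|_{\mathcal H}>0$ and the candidate $\gb_\Omega := -\nabla J(\Omega)/\|\nabla J(\Omega)\|_{\mathcal H}$ is well defined and has unit norm. Applying Cauchy--Schwarz gives, for every unit vector $\vb$,
\begin{equation*}
(\nabla J(\Omega), \vb)_{\mathcal H} \ge -\|\nabla J(\Omega)\|_{\mathcal H}\,\|\vb\|_{\mathcal H} = -\|\nabla J(\Omega)\|_{\mathcal H},
\end{equation*}
so the functional is bounded below on the sphere. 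A direct substitution then shows $dJ(\Omega)(\gb_\Omega) = (\nabla J(\Omega), \gb_\Omega)_{\mathcal H} = -\|\nabla J(\Omega)\|_{\mathcal H}$, so the lower bound is attained at $\gb_\Omega$ and the minimum equals $-\|\nabla J(\Omega)\|_{\mathcal H}$.

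For uniqueness I would invoke the equality case of Cauchy--Schwarz: equality in the estimate above holds if and only if $\vb$ and $\nabla J(\Omega)$ are linearly dependent, i.e.\ $\vb=\lambda\,\nabla J(\Omega)$ for some scalar $\lambda$. The sign of the inner product forces $\lambda<0$, and the unit-norm constraint fixes $\lambda = -1/\|\nabla J(\Omega)\|_{\mathcal H}$, leaving $\vb=\gb_\Omega$ as the only minimiser. There is no genuine obstacle here; the only point requiring a little care is the non-degeneracy step, namely that $dJ(\Omega)\ne 0$ guarantees $\nabla J(\Omega)\ne 0$ so that the normalisation defining $\gb_\Omega$ makes sense and the equality case of Cauchy--Schwarz yields a \emph{strict} inequality off the line spanned by $\nabla J(\Omega)$.
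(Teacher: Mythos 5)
Your proof is correct and follows essentially the same route as the paper's: reduce to minimising the inner product against $\nabla J(\Omega)$ over the unit sphere, apply the Cauchy--Schwarz inequality to get the lower bound $-\|\nabla J(\Omega)\|_{\mathcal H}$ attained at $\gb_\Omega$, and deduce uniqueness from the equality case (colinearity). Your write-up is slightly more careful than the paper's in spelling out that $dJ(\Omega)\ne 0$ forces $\nabla J(\Omega)\ne 0$ and in pinning down the sign and magnitude of the scalar in the colinearity argument, but the underlying idea is identical.
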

\begin{proof}
    By Cauchy Schwarz's inequality, we get 
    $ (\nabla J(\Omega),-X)_{\mathcal H(\mathcal X,\R^d)} \le \|\nabla J(\Omega)\|_{\mathcal H(\mathcal X,\R^d)}$
    for all $X \in \mathcal H(\mathcal X,\R^d)$ with $\|X\|_{\mathcal H(\mathcal X,\R^d)}=1$, 
    which is equivalent to
    $   ( \nabla J(\Omega), -\gb_\Omega)_{\mathcal H(\mathcal X,\R^d)} \le (\nabla J(\Omega), X)_{\mathcal H(\mathcal X,\R^d)} $
    for all $X\in \mathcal H(\mathcal X,\R^d)$ with $\|X\|_{\mathcal H(\mathcal X,\R^d)}=1$. This proves
    existence and also uniqueness of the minimiser since the Cauchy-Schwarz inequality is an equality if and only if the vectors are colinear.
\end{proof}

\begin{remark}			
    Suppose that for all $\Omega\in \Xi$ there is a Hilbert space $\mathcal H(\Omega,\R^d)$ of functions from $\Omega$ into $\R^d$.  
    Consider a shape function $J:\Xi \rightarrow \R$, $\Omega\in \Xi$ and assume 
    that $dJ(\Omega) \in \mathcal H(\Omega,\R^d)^*$.
    Then formally the gradient of $J$ is a mapping
    $ \nabla J: \Xi \rightarrow \bigcup_{\Omega\in \Xi} \mathcal H(\Omega,\R^d) $
    satisfying
    $ \nabla J(\Omega)\in \mathcal H(\Omega,\R^d)	$ for all $\Omega\in \Xi$. 
    If we regards $\mathcal H(\Omega,\R^d)$ as the tangent space of $\Xi$ in the point 
    $\Omega$, we can interpret $\nabla J$ as a vector field. Of course at 
    this stage $\Xi$ has no differentiable structure turning it into a manifold.  
    However, there are several possibilities to do this. One way is to introduce 
    spaces of shapes via curves cf. \cite{MR2201275}, but there are several other 
    ways to put some structure on $\Xi$; cf.  \cite[Chapter 3-7]{MR2731611}.
\end{remark}

\begin{definition}
    We call a vector field $X\in \mathcal H(D,\R^d)
    $ descent direction for $J$ at $\Omega \in \Xi$ if $dJ(\Omega)(X)$ exists 
    and $dJ(\Omega)(X) < 0$. 
\end{definition}

\section{Reproducing kernel Hilbert spaces and the shape derivative}\label{sec:kernel}
In this section we recall the definition 
of reproducing kernel Hilbert spaces (RKHSs) and their basic properties.  
We give some examples of kernels that can be used in PDE constrained 
shape optimisation. 
The aim is now to introduce certain Hilbert spaces $\mathcal H$ namely 
\emph{reproducing kernel Hilbert spaces} that allow
explicit representations of the gradient $\nabla J(\Omega)$ of shape functions. 

\subsection{Definition and basic properties of reproducing kernels}
Let $\mathcal X\subset \overbar{D}$ be an arbitrary and given set. 
We denote by 
$\mathcal{H} = \mathcal H(\mathcal X;\R^d)$ a 
real Hilbert space of vector valued functions $f:\mathcal X\rightarrow \R^d$ that will 
be specified later on. In case $d=1$ we set 
$\mathcal H:= \mathcal H(\mathcal X) := \mathcal H(\mathcal X, \R)$. 
\begin{definition}\label{def:kernel_scalar}	 
    \begin{itemize}
        \item[(a)] A function 
        $k:\mathcal X \times \mathcal X \rightarrow \R$ is called positive (semi)-definite and 
        symmetric \emph{scalar kernel} if
        \begin{itemize}
            \item[$(a_1)$] $\forall x,y\in \mathcal X$, $k(x,y)=k(y,x)$
            \item[$(a_2)$] for arbitrary pairwise distinct points 
            $\{x_1,\ldots, x_N\} \subset \mathcal X$, $N\ge 1$, the matrix $k_{ij}:=k(x_i,x_j)$ 
            is positive (semi)-definite, i.e., for all $\alpha \in \R^N\setminus \{0\}$,
            $$ \sum_{i,j=1}^N  \alpha_i \alpha_j k_{ij}  \ge(>)   0. $$
        \end{itemize}
        \item[(b)] A kernel $k$ is called radial scalar kernel, if there 
        exists a function $\gamma:\R\rightarrow \R$ such that for all 
        $x,y\in \mathcal X$, $k(x,y)= \gamma(|x-y|)$. 
        \item[(c)] A function $f:\mathcal X\rightarrow \R$ is called 
        positive (semi)-definite, if $k(x,y):=f(x-y)$ is a 
        positive (semi)-definite kernel. 
        \item[(d)] A function 
        $k:\mathcal X\times \mathcal X \rightarrow \R$ is 
        called \emph{scalar reproducing kernel} for $\mathcal H(\mathcal X)$ if 
        \begin{itemize}
            \item[$(d_1)$] for all 
            $ x\in \mathcal X, \;k(x,\cdot)\in \mathcal H(\mathcal X)$
            \item[$(d_2)$] for all $ f\in \mathcal H(\mathcal X)$ and for all $ x\in \mathcal X$,
            \ben\label{eq:rep_vec}
            (k(x,\cdot), f(\cdot))_{\mathcal H(\mathcal X)} = f(x).	
            \een
        \end{itemize}
        In this case we call $\mathcal H(\mathcal X)$ \emph{reproducing kernel Hilbert space} 
        with kernel $k$. 
    \end{itemize}
\end{definition}
It is readily seen that a reproducing kernel 
$k:\mathcal X\times \mathcal X\rightarrow \R$ is symmetric. Indeed, using the 
reproducing property $(d_2)$, 
we obtain for all 
$x,y\in \mathcal X$, 
\ben\label{eq:kernel_x_y}
(k(x, \cdot), k(y,\cdot))_{\mathcal H(\mathcal X)} = k(x,y).	
\een
Hence, scalar reproducing kernels are always symmetric.
But they are also positive semi-definite since~\eqref{eq:kernel_x_y} shows for arbitrary pairwise distinct points 
$\{x_1,\ldots, x_N\} \subset \mathcal X$, $N\ge 1$ that for all $\alpha \in \R^N\setminus \{0\}$,
\ben\label{eq:pos_semi}
\begin{split}
    \sum_{i,j=1}^N  \alpha_i \alpha_j k_{ij} & = 
    \sum_{i,j=1}^N  \alpha_i \alpha_j (k(x_i, 
    \cdot), k(x_j,\cdot))_{\mathcal H(\mathcal X)} 
    \\
    &= \left( \sum_{i=1}^N \alpha_i k(x_i, 
    \cdot), \sum_{i=1}^N \alpha_i 
    k(x_i,\cdot)\right)_{\mathcal H(\mathcal X)}\ge 0.  
\end{split}
\een
We conclude that reprodcuing kernels are symmetric and positive semi-definite. It also 
follows from \eqref{eq:pos_semi} that 
a reproducing kernel is positive definite if and only if the evaluation maps $\delta_y$ are linearly 
independent in $(\mathcal H(\mathcal X))^*$ for all $y\in \mathcal X$. 
The Moore-Aronszajn theorem ensures that for each symmetric and positive semi-definite kernel $k$ there is a 
unique RKHS. 
\begin{theorem}\label{thm:reproducing_existence}
    Suppose that $k:\mathcal X\times \mathcal X \rightarrow \R$ is a positive semi-definite and symmetric scalar kernel. Then there exists a unique Hilbert
    space $\mathcal H(\mathcal X)$ of real valued functions 
    $f:\mathcal X\rightarrow \R$ for which $k$ is the reproducing kernel. 
\end{theorem}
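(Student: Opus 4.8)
The plan is to follow the classical Moore–Aronszajn construction: build a candidate pre-Hilbert space from the kernel sections, complete it, verify that the reproducing property survives the completion, and then argue uniqueness by showing any two such spaces must share a common dense subspace on which the inner product is already forced by $k$. First I would introduce the linear span $\mathcal H_0 := \operatorname{span}\{k(x,\cdot) : x\in\mathcal X\}$ of the kernel sections and equip it with the bilinear form
$$\Big(\textstyle\sum_i \alpha_i k(x_i,\cdot),\ \sum_j \beta_j k(y_j,\cdot)\Big) := \sum_{i,j} \alpha_i \beta_j\, k(x_i,y_j).$$
Symmetry of $k$ from $(a_1)$ makes this form symmetric, and positive semi-definiteness from $(a_2)$ makes it positive semi-definite. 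The first point requiring care is well-definedness, since the representation of an element of $\mathcal H_0$ is not unique; this follows by rewriting $(\sum_i \alpha_i k(x_i,\cdot), g)$ as $\sum_i \alpha_i g(x_i)$ for $g = \sum_j \beta_j k(y_j,\cdot)$, thereby expressing the value in terms of the functions themselves rather than their coefficients.

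Next I would upgrade this semi-inner product to a genuine inner product. I would establish the reproducing identity on $\mathcal H_0$, namely $(f, k(x,\cdot)) = f(x)$, directly from the definition, and then apply the Cauchy–Schwarz inequality for the semi-inner product to obtain $|f(x)|^2 \le (f,f)\, k(x,x)$. This shows that $(f,f)=0$ forces $f\equiv 0$ pointwise, so the form is positive definite and $\mathcal H_0$ is a genuine inner product space.

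The key step, and the main obstacle, is the completion. Abstractly completing $\mathcal H_0$ yields a Hilbert space of equivalence classes of Cauchy sequences, and the real work is to realise this completion concretely as a space of \emph{functions} on $\mathcal X$. The bound $|f(x)|^2 \le (f,f)\,k(x,x)$ shows each evaluation functional is Lipschitz on $\mathcal H_0$, so every Cauchy sequence $(f_n)$ converges pointwise to a limit function $f(x) := \lim_n f_n(x)$; I would then identify the abstract completion $\mathcal H(\mathcal X)$ with the set of all such pointwise limits, checking that two Cauchy sequences are equivalent in norm precisely when they share the same pointwise limit. This identification is the delicate part, since one must verify both that the norm of the limit is the limit of the norms and that distinct abstract classes cannot collapse to the same function. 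Once this is done, the reproducing property extends from $\mathcal H_0$ by continuity: for $f = \lim_n f_n$ one has $(f, k(x,\cdot)) = \lim_n (f_n, k(x,\cdot)) = \lim_n f_n(x) = f(x)$.

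Finally, for uniqueness, suppose $\mathcal H'$ is another RKHS with reproducing kernel $k$. Both $\mathcal H(\mathcal X)$ and $\mathcal H'$ contain every section $k(x,\cdot)$, and on these sections the inner product is forced by $(k(x,\cdot),k(y,\cdot)) = k(x,y)$, which is exactly \eqref{eq:kernel_x_y}; hence the two inner products agree on $\mathcal H_0$. The span $\mathcal H_0$ is dense in each space, since any $f$ orthogonal to all $k(x,\cdot)$ satisfies $f(x) = (f,k(x,\cdot)) = 0$ for every $x$, so $f\equiv 0$. A given $f\in\mathcal H'$ is therefore the limit of a sequence from $\mathcal H_0$ that is Cauchy in the common norm, hence also converges in $\mathcal H(\mathcal X)$; as both limits agree pointwise by continuity of evaluation, they coincide as functions and have equal norm. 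This shows $\mathcal H(\mathcal X) = \mathcal H'$ with identical inner products, completing the argument.
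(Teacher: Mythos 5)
Your proof is correct and follows exactly the classical Moore--Aronszajn construction; the paper itself offers no argument for this theorem, deferring to Aronszajn \cite{MR0051437} and Wendland \cite[p.138, Thm.~10.10]{MR2131724}, where precisely this construction (pre-Hilbert space spanned by kernel sections, pointwise realisation of the completion via the bound $|f(x)|^2\le (f,f)\,k(x,x)$, density and uniqueness via the reproducing identity) is carried out, so your route coincides with the proof the paper points to. One small point to make explicit: your uniqueness argument as written gives the isometric inclusion $\mathcal H'\subseteq \mathcal H(\mathcal X)$, and the reverse inclusion should be noted to follow by exchanging the roles of the two spaces, since $\mathcal H_0$ is dense in both with the same induced norm.
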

\begin{proof}
    We refer the reader to \cite{MR0051437} and \cite[p.138, Thm.10.10]{MR2131724}.
\end{proof}
We refer to \cite[Theorem 10.12, p.139]{MR2131724} for a 
more explicit characterisation  of RKHS generated by scalar positive definite 
kernels in the case $\mathcal X=\R^d$.

Similarly to scalar kernels we  define matrix-valued kernels:
\begin{definition}\label{def:kernel_vector}
    
    \begin{itemize} 
        \item[(a)]  A function $K:\mathcal X \times \mathcal X \rightarrow \R^{d,d}$ is call a 
        symmetric and positive (semi)-definite \emph{matrix kernel} if 
        \begin{itemize}
            \item[($a_1$)] $\forall x,y\in \mathcal X$, $K(x,y)=K(y,x)$
            \item[($a_2$)] for arbitrary distinct points 
            $\{x_1,\ldots, x_N\} \subset \mathcal X$, $N\ge 1$, the matrix  $K_{ij}:=K(x_i,x_j)$ 
            satisfies, for all $\alpha_1, \alpha_2,\ldots, 
            \alpha_N\in \R^d$, not all of them identically zero,
            $$	 \sum_{i,j=1}^N K_{ij}\alpha_i\cdot \alpha_ j > (\ge ) 0.  $$
        \end{itemize}
        \item[(b)] A kernel $K$ is called radial scalar kernel if there 
        exists a function $\gamma:\R\rightarrow \R^{d,d}$ such that $K(x,y)= \gamma(|x-y|)$ for all 
        $x,y\in \mathcal X$.
        \item[(c)] A function $K:\mathcal X\times \mathcal X\rightarrow \R^{d,d}$ is called \emph{matrix-valued reproducing kernel} if
        \begin{itemize}
            \item[$(c_1)$] for every  
            $x\in \mathcal X$ and every $a\in \R^d$,
            $
            K(x,\cdot)a\in \Hrep
            $
            \item[$(c_2)$] for all $ f\in \Hrep $  and for all $a\in \R^d$,
            \ben		
            (K(x,\cdot)a, f)_{\Hrep} = (a\otimes \delta_x) f = a\cdot f(x).	
            \een
        \end{itemize} 
        
    \end{itemize}
\end{definition}
Unlike scalar reproducing kernels, matrix-valued reproducing kernels are not necessarily symmetric. 
However, using the reproducing property $(c_2)$ repetively yields
\ben\label{eq:nonsym_K}
\begin{split}
    K(x,y)a\cdot b & =   (K(y,\cdot)b, K(x,\cdot)a)_{ \Hrep}
    = (K(x,\cdot)a, K(y,\cdot)b)_{ \Hrep}
    = K(y,x)b\cdot a,
\end{split}
\een
for all $a,b\in \R^d$ and all $x,y\in \mathcal X$
so that for all $x,y\in \mathcal X$, we get
$ K(x,y) = K(y,x)^\top. $
Hence, assuming that $K$ is symmetric, using \eqref{eq:nonsym_K} we obtain that for arbitrary distinct points 
$\{x_1,\ldots, x_N\} \subset \mathcal X$, $N\ge 1$, the matrix  $K_{ij}:=K(x_i,x_j)$ 
satisfies,
\ben
\begin{split}
    \sum_{i,j=1}^N K_{ij}\alpha_i\cdot \alpha_ j  & = \sum_{i,j=1}^N 
    \alpha_i\cdot \alpha_ j  (K(x_i,\cdot)\alpha_i, 
    K(x_j,\cdot)\alpha_j)_{ \Hrep} \\
    & = \left( \sum_{i=1}^N \alpha_i K(x_i,\cdot)\alpha_i, 
    \sum_{i=1}^N K(x_i,\cdot)\alpha_i \right)_{ \Hrep} \ge 0,
\end{split}
\een
for all $\alpha_1, \alpha_2,\ldots, \alpha_N\in \R^d$, not all of them identically zero.
Consequently, every symmetric reproducing kernel 
$K:\mathcal X\times\mathcal X\rightarrow \R^{d,d}$ is also positive 
semi-definite. 

Similarly to the scalar case it holds
\begin{theorem}
    For every  matrix-valued symmetric 
    and positive semi-definite kernel $K:\mathcal X\times \mathcal X\rightarrow \R^{d,d}$ 
    there exists a unique Hilbert
    space of vector valued functions $\mathcal H(\mathcal X,\R^d)$ for which $K$ 
    is the matrix-valued reproducing kernel. 
\end{theorem}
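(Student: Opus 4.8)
The plan is to carry out the vector-valued analogue of the Moore--Aronszajn construction behind Theorem~\ref{thm:reproducing_existence}, exploiting the symmetry and positive semi-definiteness of $K$. First I would form the pre-Hilbert space $\mathcal H_0 := \operatorname{span}\{K(x,\cdot)a : x\in\mathcal X,\ a\in\R^d\}$ consisting of all finite sums of kernel sections, and equip it with the bilinear form determined on generators by $(K(x,\cdot)a, K(y,\cdot)b) := a\cdot K(y,x)b$ and extended bilinearly, so that $(f,g)=\sum_{i,j} a_i\cdot K(y_j,x_i)b_j$ for $f=\sum_i K(x_i,\cdot)a_i$ and $g=\sum_j K(y_j,\cdot)b_j$. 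This is the definition forced by the desired reproducing identity $(c_2)$, since $(K(x,\cdot)a)(y)=K(x,y)a$. The relation $K(x,y)=K(y,x)^\top$ recorded in \eqref{eq:nonsym_K} (equivalently, the symmetry of $K$) is what makes the form symmetric.

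Second, I would verify that the form is well defined and is an inner product. Writing $(f,g)=\sum_j b_j\cdot f(y_j)$ shows that its value depends on $f$ only through its values as a function and on $g$ only through the data $(y_j,b_j)$; combined with symmetry this yields independence of both chosen representations. Non-negativity $(f,f)=\sum_{i,j}(K(x_i,x_j)a_i)\cdot a_j\ge 0$ is precisely the positive semi-definiteness $(a_2)$. Definiteness then follows from the Cauchy--Schwarz inequality for semi-inner products: if $(f,f)=0$, then $(K(x,\cdot)a,f)=a\cdot f(x)=0$ for all $x\in\mathcal X$ and all $a\in\R^d$, whence $f\equiv 0$.

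Third --- the step I expect to be the main obstacle --- I would realise the abstract metric completion of $(\mathcal H_0,(\cdot,\cdot))$ as a genuine space of functions $\mathcal X\to\R^d$. The functional $f\mapsto a\cdot f(x)=(K(x,\cdot)a,f)$ is bounded on $\mathcal H_0$, with norm at most $\sqrt{K(x,x)a\cdot a}$, so each Cauchy sequence in $\mathcal H_0$ converges pointwise after pairing with any $a\in\R^d$; this assigns to every element of the completion a well-defined function. One then has to check that this assignment is injective, i.e. that an abstract element inducing the zero function is orthogonal to the dense set $\{K(x,\cdot)a\}$ and hence vanishes. Denoting the resulting Hilbert space of functions by $\mathcal H(\mathcal X,\R^d)$, property $(c_1)$ holds by construction, and $(c_2)$ extends from $\mathcal H_0$ to all of $\mathcal H(\mathcal X,\R^d)$ by density together with continuity of both sides.

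Finally, for uniqueness I would argue that any Hilbert space of functions admitting $K$ as reproducing kernel must contain every section $K(x,\cdot)a$, and that the reproducing property forces $(K(x,\cdot)a,K(y,\cdot)b)=a\cdot K(y,x)b$ there; hence its inner product agrees with the one above on $\mathcal H_0$. The same orthogonality argument shows that $\mathcal H_0$ is dense in any such space, so two of them are completions of one and the same normed function space and therefore coincide, both as sets of functions and in their norms.
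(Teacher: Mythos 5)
Your construction is correct, but note that the paper does not actually prove this theorem at all: its entire proof is the citation ``Proposition 1 in \cite{MR2265340}''. What you have written out is the vector-valued Moore--Aronszajn argument, i.e.\ essentially the content of that cited proposition, so your route is genuinely different from the paper's (a self-contained proof versus a deferral to the literature). Your steps are the standard and sound ones: the inner product on the span of kernel sections is forced by the reproducing property $(c_2)$ together with the relation $K(x,y)=K(y,x)^\top$ recorded in \eqref{eq:nonsym_K}; well-definedness follows from your identity $(f,g)=\sum_j b_j\cdot f(y_j)$ combined with symmetry; non-negativity of $(f,f)$ is exactly the paper's condition $(a_2)$ (after merging coincident points, since the definition quantifies over pairwise distinct points); definiteness follows from Cauchy--Schwarz for semi-inner products plus the fact that elements of the pre-Hilbert space are honest functions; and you correctly isolate the one delicate step, namely realising the abstract completion as a space of $\R^d$-valued functions via the bounded evaluations $f\mapsto a\cdot f(x)$, with injectivity coming from density of the sections $K(x,\cdot)a$. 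Your uniqueness argument (forced inner product on the span, density of the span in any candidate space, identification of the two completions through pointwise limits) is likewise the standard one. The trade-off is clear: the paper's citation is economical and delegates the technical work, whereas your proof is self-contained, runs parallel to the scalar Moore--Aronszajn statement in Theorem~\ref{thm:reproducing_existence} (which the paper also only cites), and makes explicit exactly where the symmetry and positive semi-definiteness of $K$ enter.
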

\begin{proof}
    We refer to Proposition 1 in \cite{MR2265340}.	
\end{proof}
Another special property of vvRKHSs is that for all $a\in \R^d$ and 
$x\in \mathcal X$, the 
evaluation map
$$  \Hrep\rightarrow \R:\; f\mapsto (a\otimes\delta_x) f = f(x)\cdot a  $$
is continuous. In fact, we obtain from $(c_2)$ and Cauchy's inequality that
\begin{align*} 
|a\cdot f(x)| =  |(K(x,\cdot)a, f)_{\Hrep}| \le \|K(x,\cdot)a\|_{\Hrep} \|f\|_{\Hrep}.
\end{align*}
Conversely, for every Hilbert space $\Hrep$ 
of vector 
valued functions $f:\mathcal X\rightarrow \R^d$ for which the evaluation map $f\mapsto f(x)\cdot a$ is 
continuous for all $a\in \R^d$ and $x\in \mathcal X$, there is a unique 
kernel $K(x,y)$ satisfying $(c_1)$ and $(c_2)$; cf. \cite[p.143, Thm. 10.2]{MR2131724}.

\begin{example}\label{ex:Hk_RKHS}
    As an example consider $\mathcal X=\R^d$ and the Sobolev space $H^k(\R^d)$ 
    with $k,d$ being non-negative integers satisfying $k \ge \lfloor\frac{d}{2}\rfloor + 1$. Then, 
    the Sobolev embedding yields
    $$ \forall \varphi \in H^k(\R^d), \quad \|\varphi\|_{C^0(\R^d)} \le c \|\varphi\|_{H^k(\R^d)}. 	$$
    Thus, the point evaluation 
    $\delta_y: H^k(\R^d) \rightarrow \R, f \mapsto f(y)$ is in fact
    continuous 
    for all $y\in \R^d$. Hence there is a reproducing kernel $k:\R^d\times \R^d \rightarrow \R$ for which $H^k(\R^d)$ is the 
    reproducing kernel Hilbert space.  
\end{example}

We depict some examples of positive semi-definite kernels in the following:
\begin{example}
    \begin{align}
    \label{eq:kernel1}	 K_1(x,y) & :=  \si(|x-y|)  && (\textsf{kernel generating Paley-Wiener space}),\\
    \label{eq:kernel2}      K_2(x,y) & := e^{-|x-y|^2}  &&  (\textsf{Gauss kernel}), \\ 
    \label{eq:kernel3}      K_3(x,y) & := e^{-|x-y|}  &&  (\textsf{Laplacian kernel}),\\
    \label{eq:kernel4}      K_4(x,y) & := |x-y|^{k-d/2}B_{k-d/2}(|x-y|) && (\textsf{kernel 
        generating } W^k_2(\R^d)),\\ 
    \label{eq:kernel5}      K_5(x,y) & := (1-|x-y|)_+^4(4|x-y|+1) && 
    (\textsf{polynomial kernel with compact support}).
    \end{align}
  The function $B_{k-d/2}$ is called Hankel function and 
    $\si(x):=\sin(x)/x$. 
\end{example}

One special feature of RKHS/vvRKHS is that the convergence in $\mathcal H(\mathcal X,\R^d)$ implies 
pointwise converges on $\mathcal X$; cf. \cite{MR2131724}. 
Another property is that the span of $K(x,\cdot)a$, $a\in \R^d$, 
$x\in \mathcal X$ is dense in $\Hrep$ in case $\mathcal X$ is open. We recall both 
results in the following lemmas.     
\begin{lemma}
    Let $\mathcal X$ be a compact set and $K$ a matrix-valued symmetric and 
    positive definite kernel on $\mathcal X$ and $\Hrep$ the corresponding vvRKHS. 
    Then the span of $\{K(x,\cdot)a:\; x\in \mathcal X\}$ is dense in 
    $\Hrep.$
\end{lemma}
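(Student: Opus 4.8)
The plan is to establish density by showing that the orthogonal complement of the relevant span is trivial, which in a Hilbert space is equivalent to the span being dense. First I would introduce the closed subspace
\[
V := \overline{\operatorname{span}\{K(x,\cdot)a : x\in\mathcal X,\ a\in\R^d\}} \subset \Hrep,
\]
observing that each generator $K(x,\cdot)a$ does lie in $\Hrep$ by property $(c_1)$ of Definition~\ref{def:kernel_vector}. Since $\Hrep = V \oplus V^\perp$ by the orthogonal decomposition of a Hilbert space, it suffices to prove $V^\perp = \{0\}$.

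To this end I would take an arbitrary $f \in V^\perp$. Then $f$ is orthogonal to every generator, and the reproducing property $(c_2)$ immediately yields
\[
a\cdot f(x) = (K(x,\cdot)a, f)_{\Hrep} = 0 \qquad \textsf{ for all } x\in\mathcal X,\ a\in\R^d.
\]
As $a\in\R^d$ is arbitrary, this forces $f(x)=0$ for every $x\in\mathcal X$. Because the elements of the vvRKHS are genuine functions on $\mathcal X$, pointwise vanishing means that $f$ is the zero function, hence $f=0$ in $\Hrep$. Therefore $V^\perp=\{0\}$ and consequently $V=\Hrep$, i.e. the span is dense.

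I do not expect a genuine obstacle here: the argument rests solely on the reproducing property together with the membership statement $(c_1)$, and the only point requiring a word of care is the passage from $f(x)=0$ for all $x$ to $f=0$ as an element of $\Hrep$, which is justified by the realisation of $\Hrep$ as a space of functions (equivalently, by the continuity of the evaluation maps established just above). It is worth remarking that neither the compactness of $\mathcal X$ nor the positive definiteness (as opposed to mere semi-definiteness) of $K$ is actually used in this reasoning; these hypotheses appear to be retained only for consistency with the setting in which the lemma is subsequently applied.
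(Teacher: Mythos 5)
Your proof is correct and follows essentially the same route as the paper: both take the closure $V$ of the span, use the orthogonal decomposition $\Hrep = V \oplus V^\perp$, and apply the reproducing property to show any $f \in V^\perp$ vanishes pointwise, hence is zero. Your observation that neither compactness of $\mathcal X$ nor strict positive definiteness of $K$ is needed also applies to the paper's own argument (and your $V^\perp = \{0\}$ is the correct statement of what the paper loosely writes as $V^\bot = \emptyset$).
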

\begin{proof}
    Let $V$ denote the closure of $\textsf{span}\{K(x,\cdot)a:\; x\in 
    \mathcal X\}$ in $\Hrep$. Since $\Hrep$ is a Hilbert space 
    it holds $\Hrep = V \oplus V^\bot$. Let $f \in V^\bot$ be arbitrary. Then
    the reproducing property yields $(f,K(x,\cdot)e_i)=f_i(x)=0$ for all $x\in \mathcal X$ and 
    $i=1,\ldots, d$. It follows $f=0$ and thus $V^\bot=\emptyset$ and 
    consequently $V=\Hrep$. 
\end{proof}
\begin{lemma}
    Suppose $\Hrep$ is a vvRKHS with matrix-valued kernel 
    $K:\mathcal X\times \mathcal X \rightarrow \R^{d,d}$.  Then, if $f_n,f\in \Hrep$ 
    with $f_n\rightarrow f$ as $n\rightarrow \infty$ in $ \Hrep$, it follows
    $$ f_n(x)\rightarrow f(x) \quad \textsf{ for all } x\in \mathcal X. $$
\end{lemma}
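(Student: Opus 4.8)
The plan is to deduce pointwise convergence directly from the continuity of the evaluation map recorded just above the statement. Since convergence in $\R^d$ is equivalent to convergence of each component, I would first reduce the claim to showing that $a\cdot f_n(x)\to a\cdot f(x)$ holds for every fixed $a\in\R^d$ and every fixed $x\in\mathcal X$; choosing $a=e_1,\dots,e_d$ then yields $f_n(x)\to f(x)$ in $\R^d$.

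Next I would rewrite the scalar difference by means of the reproducing property $(c_2)$. Using bilinearity of the inner product,
\[
a\cdot\big(f_n(x)-f(x)\big)=(K(x,\cdot)a,\,f_n-f)_{\Hrep},
\]
and Cauchy's inequality gives
\[
\big|a\cdot(f_n(x)-f(x))\big|\le \|K(x,\cdot)a\|_{\Hrep}\,\|f_n-f\|_{\Hrep}.
\]
For fixed $x$ and $a$, property $(c_1)$ ensures $K(x,\cdot)a\in\Hrep$, so the factor $\|K(x,\cdot)a\|_{\Hrep}$ is a finite constant independent of $n$; in fact the reproducing property shows $\|K(x,\cdot)a\|_{\Hrep}^2=a\cdot K(x,x)a$. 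Since $\|f_n-f\|_{\Hrep}\to0$ by hypothesis, the right-hand side tends to zero, which establishes the reduced claim and hence the lemma.

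I do not expect a genuine obstacle: the statement is an immediate consequence of the Cauchy--Schwarz estimate already displayed above it, and the only ingredients are the defining properties $(c_1)$ and $(c_2)$ of the matrix-valued kernel. The one subtlety worth flagging is that the bounding constant $\|K(x,\cdot)a\|_{\Hrep}=\sqrt{a\cdot K(x,x)a}$ depends on $x$, so the argument yields only pointwise convergence; it would strengthen to uniform convergence on any subset of $\mathcal X$ on which $K(x,x)$ remains bounded, but this refinement is not required for the statement as given.
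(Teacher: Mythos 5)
Your proof is correct and follows essentially the same route as the paper: both reduce to components, express $(f_n(x)-f(x))\cdot e_i$ via the reproducing property $(c_2)$ as an inner product against $K(x,\cdot)e_i$, and conclude with the Cauchy--Schwarz inequality. Your added observation that $\|K(x,\cdot)a\|_{\Hrep}^2 = a\cdot K(x,x)a$ (and the resulting remark on uniform convergence where $K(x,x)$ is bounded) is a harmless refinement the paper omits.
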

\begin{proof}
    For all $e_i$ with $i\in \{1\ldots, d\}$ it holds
    \begin{align*}
    |(f(x)-f_n(x))\cdot e_i|  = |(f-f_n, K(x,\cdot)e_i)_{ \Hrep}| \le \|f-f_n\|_{\Hrep}\|K(x,\cdot)e_i\|_{\Hrep}.
    \end{align*}
\end{proof}

\subsection{Formulas of shape gradients in reproducing kernel Hilbert spaces}\label{subsec:explicit}
This section presents the central part of this paper. We give explicit formulas for 
shape gradients in reproducing kernel Hilbert spaces and study special 
radial kernels. 
Moreover, we discuss methods to approximate and discretise the domain expression of the
shape derivative on various finite dimensional reproducing kernel Hilbert spaces constructed by finite 
elements and kernels.
It turns out that the
gradient of the shape derivative in a vvRKHS can be recovered by a 
sequence of vector solved on these finite dimensional subproblems. 
In a number of recent articles \cite{MR2642680,MR3348199,MR3436555,lauraindistributed}, the
volume expression
has been used successfully by employing finite elements. 
Subsequently, we set this finite element method in a broader context and 
relate it to reproducing kernel Hilbert spaces.

In this section we consider shape 
differentiable functions
\ben\label{eq:shape_func}
J:\Xi\subset \wp(D) \rightarrow \R, \quad \Omega\mapsto J(\Omega) 
\een
for open and bounded $D\subset \R^d$ that admit for each $\Omega$ in $\Xi$ a tensor representation of the form 
\ben\label{eq:tensor_kernel}
dJ(\Omega)(X) = \int_{\mathcal X} \Sb_1 : \partial X +  \Sb_0 \cdot X\, dx,   
\een
where $\Sb_1\in L_1(\mathcal X, \R^{d,d})$ and $\Sb_0\in L_1(\mathcal X, \R^d)$. 
This means the shape 
derivative is a linear and continuous mapping 
$dJ(\Omega):W^1_\infty(\mathcal X,\R^d)\rightarrow \R$. 
Typically, the set
$\mathcal X$ is either $\Omega$ or $D$; cf. Example~\ref{ex:shape_func_f}. 
\subsubsection*{Shape gradients in vvRKHS} 
Reproducing kernel 
Hilbert spaces allow us to obtain explicit formulas for the Riesz 
representation of functionals defined on them as shown by the following lemma.  
\begin{lemma}\label{lem:first_order_kernel}
    Let $\mathcal X\subset \overbar{D}$.	Suppose $\Hrep $ is a vvRKHS with matrix-valued kernel 
    $K(x,y)=(K_1(x,y),\ldots,K_d(x,y))$ and assume $dJ(\Omega)\in (\Hrep)^*$.  Then the gradient $\nabla 
    J(\Omega)$ of $J$ at $\Omega$ with respect to the $\Hrep
    $-metric is given pointwise for all $y\in \mathcal X$ by
    \ben\label{eq:formula_gradient_H}	
    \nabla J(\Omega)(y) = \sum_{i=1}^d\left(\int_{\mathcal X} \Sb_1(x) : \partial_x K_i(x,y)   + \Sb_0(x) \cdot  K_i(x,y)  \, dx \right)e_i, 
    \een
    where $e_i$ denotes the standard basis of $\R^d$. 
\end{lemma}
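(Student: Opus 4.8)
The plan is to recover the gradient pointwise by feeding the reproducing kernel itself into the Riesz identity that defines it. Fix $y\in\mathcal X$ and $a\in\R^d$. By property $(c_1)$ the field $K(y,\cdot)a$ belongs to $\Hrep$, hence it is an admissible test function in \eqref{eq:H_gradient}. First I would combine the reproducing property $(c_2)$ with the symmetry of the real inner product to obtain
\begin{equation*}
a\cdot \nabla J(\Omega)(y) = (K(y,\cdot)a,\nabla J(\Omega))_{\Hrep} = (\nabla J(\Omega),K(y,\cdot)a)_{\Hrep},
\end{equation*}
and then invoke \eqref{eq:H_gradient} with $X=K(y,\cdot)a$ to replace the last inner product by $dJ(\Omega)(K(y,\cdot)a)$. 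This reduces the task to evaluating the shape derivative on a single, explicitly known vector field.

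The second step is to insert the tensor representation \eqref{eq:tensor_kernel}. Viewing the test field as the function $x\mapsto K(x,y)a$ (using the symmetry of $K$ to identify $K(y,\cdot)a$ with $K(\cdot,y)a$), I would get
\begin{equation*}
a\cdot \nabla J(\Omega)(y) = \int_{\mathcal X} \Sb_1(x):\partial_x\big(K(x,y)a\big) + \Sb_0(x)\cdot K(x,y)a\,dx.
\end{equation*}
Specialising to $a=e_i$, using $K(x,y)e_i = K_i(x,y)$ and $\partial_x\big(K(x,y)e_i\big)=\partial_x K_i(x,y)$, isolates the $i$-th component $\nabla J(\Omega)(y)\cdot e_i$. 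Assembling $\nabla J(\Omega)(y)=\sum_{i=1}^d\big(\nabla J(\Omega)(y)\cdot e_i\big)e_i$ then produces exactly \eqref{eq:formula_gradient_H}.

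The step I expect to demand the most care is the legitimacy of applying the tensor representation to the kernel field. The identity \eqref{eq:tensor_kernel} is stated for $X\in\ac C^1(\overbar D,\R^d)$ and extends to $W^1_\infty(\mathcal X,\R^d)$, so I must verify that $K(y,\cdot)a$ possesses enough regularity for the volume integral to converge, which reduces to checking that $x\mapsto K(x,y)a$ and its Jacobian lie in $L_\infty(\mathcal X)$ while pairing against $\Sb_0,\Sb_1\in L_1(\mathcal X)$. For the smooth kernels of interest (for instance the Gaussian or the compactly supported kernel listed above) this holds immediately, and the same bound guarantees that $\partial_x K_i(\cdot,y)$ is well defined, so that the right-hand side of \eqref{eq:formula_gradient_H} is meaningful for each fixed $y$. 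A last bookkeeping point is the transpose hidden in $K(x,y)=K(y,x)^\top$: one should confirm that the kernel is used in its symmetric form, so that $K(y,x)a$ may legitimately be written as $K(x,y)a$ in the integrand; without symmetry the columns $K_i(x,y)$ in the statement would have to be read as the corresponding rows.
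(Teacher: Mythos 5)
Your proof is correct and follows essentially the same route as the paper's: test the Riesz identity \eqref{eq:H_gradient} with the kernel section $K(y,\cdot)e_i$, use the reproducing property $(c_2)$ to turn the inner product into the pointwise evaluation $\nabla J(\Omega)(y)\cdot e_i$, and then write out $dJ(\Omega)(K(y,\cdot)e_i)$ via the tensor representation \eqref{eq:tensor_kernel}. Your two extra points of care --- the integrability of $x\mapsto K(x,y)a$ and its Jacobian against $\Sb_0,\Sb_1\in L_1$, and the transpose issue $K(x,y)=K(y,x)^\top$ when identifying $K(y,\cdot)e_i$ with the column $K_i(\cdot,y)$ --- are legitimate refinements that the paper leaves implicit, but they do not alter the argument.
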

\begin{proof}
    Let $e_i\in \R^d$, $i\in \{1,2,\ldots, d\}$, denote the standard basis 
    of $\R^d$. By definition the gradient $\nabla J(\Omega)$ in 
    $\Hrep$ satisfies
    \ben\label{eq:gradient_H}
    (\nabla J(\Omega), \varphi )_{\Hrep} = dJ(\Omega)(\varphi) \quad \textsf{ for all }\varphi\in \Hrep. 	
    \een
    By property $(c_1)$ we know that for all $y\in 
    \mathcal X$ the function $\varphi_i^y(\cdot):=K(y, \cdot)e_i$ belongs 
    to  $\Hrep$. Therefore, plugging $\varphi_i^y$ into  
    \eqref{eq:gradient_H} and using the reproducing property $(c_2)$, we obtain
    \begin{align*}
    dJ(\Omega)(\varphi_i^y) & =  (\nabla J(\Omega), K(y,\cdot)e_i )_{\mathcal H} =   \nabla J(\Omega)(y) \cdot e_i 
    \end{align*} 
    for $i=1,\ldots, d$. 
    This shows 
    $$( \nabla J(\Omega)(y))_i = \int_{\mathcal X} \Sb_1(x) : \partial_x K_i(x,y)   + \Sb_0(x) \cdot  K_i(x,y)  \, dx	$$
    for $i=1,\ldots, d$ and thus completes the proof. 
\end{proof}
\begin{remark}
    \begin{itemize}
        \item Equation \eqref{eq:formula_gradient_H} gives an explicit 
        formula of the gradient $\nabla J(\Omega)$ without any 
        approximation. This is in contrast to the usual method using
        the $H^1$ metric and finite elements; cf. Section 
        \ref{subsec:shape_gradient} and also \cite{lauraindistributed,hiptpagan15}.
        \item For an efficient evaluation of the right-hand side of \eqref{eq:formula_gradient_H}
        we need to approximate the integral over $D$ in an efficient 
        way. Of course, in practice,  \eqref{eq:formula_gradient_H} has usually only to 
        be evaluated for $y$ on the boundary $\partial \Omega$ and not on 
        the whole domain.
        \item The assumption $dJ(\Omega)\in (\Hrep)^*$ is for instance 
        satisfied when $\mathcal X$ is open and $ \Hrep$ is continuously embedded into
        $C^1(\overbar{\mathcal X},\R^d)$, i.e., there is a constant $c>0$, such that
        $$  \|\varphi\|_{C^1(\overbar{\mathcal X},\R^d)} \le c \|\varphi\|_{\Hrep} \quad \textsf{ for all }\varphi \in  \Hrep. $$ 
        Similarly as in Example~\ref{ex:Hk_RKHS}, 
        in case $\mathcal X$ open, bounded and of class $C^1
        $,  one could consider the Sobolev space $H^k(\mathcal X)$ 
        with integers $k,d\ge 1$ satisfying $k \ge \lfloor\frac{d}{2}\rfloor + 2$. Then 
        the Sobolev embedding shows
        $$ \forall \varphi \in H^k(\mathcal X), \quad \|\varphi\|_{C^1(\mathcal X)} \le c \|\varphi\|_{H^k(\mathcal X)} 	$$
        and consequently 
        $dJ(\Omega)\in (\mathcal H(\mathcal X,\R^d))^*$, where $\mathcal H(\mathcal X,\R^d) := [H^k(\mathcal X)]^d$. 
    \end{itemize}
\end{remark}

\subsubsection*{Radial kernels}
We now focus on radial kernels of the form
\ben\label{eq:kernel_radial_cor}
K(x,y) = \phi_\sigma(|x-y|^2)I, \quad \phi_\sigma(r) := \phi(r/\sigma), \quad\sigma>0,	
\een
where $\phi\in C^1(\R^d)$ is some given function. 

\begin{lemma}\label{lem:product}
    Assume $k$ is a reproducing kernel on the set 
    $\mathcal X\subset \overbar{D}$ with corresponding reproducing kernel Hilbert space $\mathcal H(\mathcal X)$. Then $K(x,y):=k(x,y)I$ 
    is a matrix-valued reproducing kernel with vector valued reproducing
    kernel Hilbert space 
    $\mathcal H(\mathcal X;\R^d) := [\mathcal H(\mathcal X)]^d $ and inner product
    \ben\label{eq:inner_radial}
    (f,g)_{\mathcal H(\mathcal X;\R^d)} := (f_1,g_1)_{\mathcal H(\mathcal X)} + \cdots + (f_d,g_d)_{\mathcal H(\mathcal X)}	
    \een
    for all $f=(f_1,\ldots, f_d)$ and $g=(g_1,\ldots, g_d)$ with $f_1,\ldots, f_d, g_1,\ldots, g_d\in \mathcal H(\mathcal X)$. 
\end{lemma}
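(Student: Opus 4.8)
The plan is to identify $[\mathcal H(\mathcal X)]^d$, equipped with the product inner product \eqref{eq:inner_radial}, as the vvRKHS associated with $K=kI$ by directly verifying the two defining properties $(c_1)$ and $(c_2)$ of a matrix-valued reproducing kernel from Definition~\ref{def:kernel_vector}. Once these hold, uniqueness of the underlying Hilbert space is automatic from the existence-and-uniqueness theorem for vvRKHS quoted just above, so no separate argument is required for that part. Before turning to $(c_1)$ and $(c_2)$, I would record two preliminary facts: that $\mathcal H(\mathcal X;\R^d):=[\mathcal H(\mathcal X)]^d$ with the inner product \eqref{eq:inner_radial} is genuinely a Hilbert space, being the finite orthogonal direct sum of $d$ copies of the Hilbert space $\mathcal H(\mathcal X)$ (completeness is inherited coordinatewise), and that $K(x,y)=k(x,y)I$ is symmetric, since scalar reproducing kernels are symmetric by \eqref{eq:kernel_x_y}, whence $K(x,y)^\top=k(x,y)I=k(y,x)I=K(y,x)$.

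To check $(c_1)$, I would fix $x\in\mathcal X$ and $a\in\R^d$ and observe that $K(x,\cdot)a=k(x,\cdot)\,a$, whose $j$-th component is the scalar function $a_j\,k(x,\cdot)$. By property $(d_1)$ of the scalar kernel we have $k(x,\cdot)\in\mathcal H(\mathcal X)$, so every component lies in $\mathcal H(\mathcal X)$ and therefore $K(x,\cdot)a\in[\mathcal H(\mathcal X)]^d=\mathcal H(\mathcal X;\R^d)$. To verify $(c_2)$, I would take an arbitrary $f=(f_1,\ldots,f_d)\in\mathcal H(\mathcal X;\R^d)$ and expand using the definition \eqref{eq:inner_radial} of the product inner product together with the scalar reproducing property $(d_2)$:
\begin{equation*}
(K(x,\cdot)a,f)_{\mathcal H(\mathcal X;\R^d)}=\sum_{j=1}^d \bigl(a_j\,k(x,\cdot),f_j\bigr)_{\mathcal H(\mathcal X)}=\sum_{j=1}^d a_j\,f_j(x)=a\cdot f(x).
\end{equation*}
This is exactly $(c_2)$, so $K=kI$ is the matrix-valued reproducing kernel of $\mathcal H(\mathcal X;\R^d)$, which completes the argument.

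I do not expect any serious obstacle: the statement follows from bilinearity of the inner product and the two scalar kernel axioms. The only points requiring a little care are the bookkeeping of which scalar axiom — $(d_1)$ for membership versus $(d_2)$ for the reproducing identity — is invoked at each step, and the observation that completeness of the product space is inherited factorwise so that the direct sum is indeed a Hilbert space to which the uniqueness theorem applies.
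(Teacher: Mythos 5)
Your proof is correct and follows essentially the same route as the paper: the paper likewise dismisses $(c_1)$ as immediate and verifies $(c_2)$ by expanding the product inner product \eqref{eq:inner_radial} coordinatewise and applying the scalar reproducing property, exactly as in your displayed computation. Your additional remarks on completeness of the product space and symmetry of $K$ are fine but not part of the paper's (shorter) argument.
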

\begin{proof}
    We have to show that $K(x,y)$ is the reproducing kernel for the Hilbert space $[\mathcal H(\mathcal X)]^d$ with inner producing given by \eqref{eq:inner_radial}. Clearly $K(x,y)$ satisfies $(c_1)$, so it remains to show $(c_2)$.
    By assumption, $k$ is a scalar reproducing 
    kernel satisfying
    \ben\label{eq:kernel_prop}
    \forall x\in \mathcal X, \; \forall f\in \mathcal H(\mathcal X), \quad (k(x,\cdot)a,f)_{\mathcal H(\mathcal X)} = f(x).
    \een
    Then for all $f=(f_1,\ldots, f_d)$, $f_1,\ldots, f_d\in \mathcal H(\mathcal X)$ 
    and for all $a = (a_1,\ldots, a_d)\in \R^d$, we get
    \ben
    \begin{split}
        (K(x,\cdot)a,f)_{\mathcal H(\mathcal X;\R^d)} & = (a_1 k(x,\cdot),f)_{\mathcal 
            H(\mathcal X)} + \cdots + (a_d k(x,\cdot),f_d)_{\mathcal H(\mathcal X)} \\
        &\stackrel{\eqref{eq:kernel_prop}}{=} a_1 f_1(x) + \cdots + a_d f_d(x)\\
        & = a\cdot f(x) = a\otimes \delta_x f
    \end{split}
    \een
    and this shows $(c_2)$.
\end{proof}
\begin{example}
    Let us return to Example~\ref{ex:Hk_RKHS} where $\mathcal X=\R^d$ and $
    \mathcal H(\mathcal X) = H^k(\R^d)$ with 
    $k \ge \lfloor\frac{d}{2}\rfloor + 1$. Let $k(x,y)$ be the scalar 
    reproducing kernel 
    associated with  $H^k(\R^d)$. Then according to Lemma~\ref{lem:product} 
    the matrix-valued radial kernel $K(x,y):= k(x,y)I$ is the reproducing kernel
    for $\mathcal H(\R^d,\R^d):=[H^k(\R^d)]^d$. 
\end{example}

\begin{lemma}
    Let $\phi\in C^1(\R)$ be such that $k(x,y):=\phi_\sigma(|x-y|^2)$, $\sigma>0$, is 
    a reproducing kernel on $\mathcal X\subset \overbar{D}$ with reproducing 
    kernel Hilbert space $\mathcal H(\mathcal X)$. Let 
    $[\mathcal H(\mathcal X)]^d$ be the vector valued kernel Hilbert space 
    for  radial kernel $K(x,y)$ given by 
    \eqref{eq:kernel_radial_cor} and assume $dJ(\Omega)\in ([\mathcal{H}(\mathcal{X})]^d)^*$. Then the gradient $\nabla^\sigma J(\Omega)
    $ in $[\mathcal H(\mathcal X)]^d$ is given pointwise in $\mathcal X$ by 
    \ben\label{eq:formula_radial_kernel}
    \nabla^\sigma J(\Omega)(y) =\int_{\mathcal X} \left(\phi_\sigma(|x-y|^2) 
    \Sb_0(x)+\frac{2}{\sigma}\phi_\sigma'(|x-y|^2) \Sb_1(x) (x-y)  \right)\ dx, 
    \een
    where $\phi_\sigma'(r) := \phi'(r/\sigma)$. 
\end{lemma}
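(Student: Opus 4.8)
The plan is to specialise the general gradient formula of Lemma~\ref{lem:first_order_kernel} to the radial kernel \eqref{eq:kernel_radial_cor}. First I would invoke Lemma~\ref{lem:product} to confirm that $K(x,y)=\phi_\sigma(|x-y|^2)I = k(x,y)I$ really is a matrix-valued reproducing kernel for $[\mathcal H(\mathcal X)]^d$ equipped with the product inner product \eqref{eq:inner_radial}; this is exactly what licences the application of Lemma~\ref{lem:first_order_kernel} with $\Hrep := [\mathcal H(\mathcal X)]^d$. The columns of $K$ are then simply $K_i(x,y) = \phi_\sigma(|x-y|^2)e_i$. Since \eqref{eq:formula_gradient_H} is already an explicit integral, no further functional-analytic step is required: the whole argument reduces to evaluating the two contractions $\Sb_1(x):\partial_x K_i(x,y)$ and $\Sb_0(x)\cdot K_i(x,y)$ and then reassembling the components weighted by $e_i$.

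The main computation is the Jacobian $\partial_x K_i(x,y)$. Writing $(K_i)_j = \phi_\sigma(|x-y|^2)\delta_{ij}$, differentiation in $x$ gives $(\partial_x K_i)_{jl} = \delta_{ij}\,\partial_{x_l}\phi_\sigma(|x-y|^2)$, so that $\Sb_1(x):\partial_x K_i(x,y) = \sum_l (\Sb_1(x))_{il}\,\partial_{x_l}\phi_\sigma(|x-y|^2)$, which is precisely the $i$-th component of the vector $\Sb_1(x)\,\nabla_x\phi_\sigma(|x-y|^2)$. Likewise the zeroth-order term collapses to $\Sb_0(x)\cdot K_i(x,y) = \phi_\sigma(|x-y|^2)\,(\Sb_0(x))_i$. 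Summing $\big(\Sb_1(x):\partial_x K_i + \Sb_0(x)\cdot K_i\big)e_i$ over $i$ then recombines these componentwise identities into a single vector equation in $\R^d$.

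The one place that needs care — and the origin of the prefactor $2/\sigma$ — is the chain-rule evaluation of $\nabla_x\phi_\sigma(|x-y|^2)$ under the paper's convention $\phi_\sigma'(r):=\phi'(r/\sigma)$. Because $\phi_\sigma(r)=\phi(r/\sigma)$, the scalar derivative of $\phi_\sigma$ is $\tfrac{d}{dr}\phi_\sigma(r)=\tfrac1\sigma\phi'(r/\sigma)=\tfrac1\sigma\phi_\sigma'(r)$ and \emph{not} $\phi_\sigma'(r)$; combining this with $\partial_{x_l}|x-y|^2 = 2(x_l-y_l)$ yields $\partial_{x_l}\phi_\sigma(|x-y|^2) = \tfrac{2}{\sigma}\phi_\sigma'(|x-y|^2)(x_l-y_l)$. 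Substituting this turns $\Sb_1(x)\,\nabla_x\phi_\sigma(|x-y|^2)$ into $\tfrac{2}{\sigma}\phi_\sigma'(|x-y|^2)\,\Sb_1(x)(x-y)$, and together with the $\phi_\sigma(|x-y|^2)\Sb_0(x)$ term this reproduces \eqref{eq:formula_radial_kernel} exactly. I expect the bookkeeping of these two nested derivatives — the $1/\sigma$ coming from $\phi_\sigma$ and the $2(x-y)$ coming from $|x-y|^2$ — to be the only genuine pitfall, and the assumed $C^1$ regularity of $\phi$ is precisely what guarantees that every integrand above is well defined.
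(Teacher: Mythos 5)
Your proposal is correct and takes essentially the same route as the paper, whose entire proof is the single sentence ``This follows directly from Lemma~\ref{lem:first_order_kernel}''; your write-up simply supplies the specialisation that the paper leaves implicit. In particular, your careful handling of the chain rule under the convention $\phi_\sigma'(r):=\phi'(r/\sigma)$ (the source of the $\tfrac{2}{\sigma}$ prefactor) and the appeal to Lemma~\ref{lem:product} to justify applying Lemma~\ref{lem:first_order_kernel} to $K=kI$ are exactly the details the paper's terse proof presupposes.
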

\begin{proof}
    This follows directly from Lemma~\ref{lem:first_order_kernel}.
\end{proof}

\begin{corollary}\label{cor:divergence_radial}
    Let $\phi\in C^2(\R)$ be as in the previous lemma and suppose that 
    $\mathcal X\subset \overbar{D}$ is open.	
    The gradient of $ \nabla^\sigma J(\Omega)$ is given pointwise in $\mathcal X$ by 
    \ben\label{eq:formula_grad}
    \begin{split}
        \partial_y(\nabla^\sigma J(\Omega))(y)  =& - \int_{\mathcal X} \frac{2}{\sigma} \left( 
        \phi_\sigma'(|x-y|^2)( \Sb_0(x)\otimes (x-y) + \Sb_1(x)\right) \, dx\\
        & - \int_{\mathcal X} \frac{4}{\sigma^2} \phi_\sigma''(|x-y|^2) 
        (\Sb_1(x)(x-y))\otimes (x-y)\, dx 
    \end{split}
    \een
    and thus there is a constant $c>0$, so that for all $\sigma >0$, 
    \ben\label{eq:grad_estimate}
    \|\partial_y(\nabla^\sigma J(\Omega))\|_{L_\infty(\mathcal X,\R^{d,d})} \le \frac{c}{\sigma}. 
    \een
\end{corollary}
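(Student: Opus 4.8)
The plan is to obtain \eqref{eq:formula_grad} by differentiating the explicit representation \eqref{eq:formula_radial_kernel} of $\nabla^\sigma J(\Omega)$ directly under the integral sign, and then to read off \eqref{eq:grad_estimate} by estimating the resulting integrands. Since $\mathcal X\subset\overbar D$ is bounded and $\Sb_0\in L_1(\mathcal X,\R^d)$, $\Sb_1\in L_1(\mathcal X,\R^{d,d})$, while $\phi\in C^2(\R)$ has bounded first and second derivatives on the relevant (bounded) range of arguments, the integrands together with their $y$-derivatives admit $y$-independent integrable majorants; this justifies differentiating under the integral by dominated convergence. Writing $\phi_\sigma(|x-y|^2)=\phi(|x-y|^2/\sigma)$ and using $\partial_y|x-y|^2=-2(x-y)$, the chain rule gives $\partial_y[\phi_\sigma(|x-y|^2)]=-\tfrac2\sigma\phi_\sigma'(|x-y|^2)(x-y)^\top$ and, with $\phi_\sigma''(r):=\phi''(r/\sigma)$, likewise $\partial_y[\phi_\sigma'(|x-y|^2)]=-\tfrac2\sigma\phi_\sigma''(|x-y|^2)(x-y)^\top$.

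Next I would differentiate the two summands of \eqref{eq:formula_radial_kernel} separately. The first summand $\phi_\sigma(|x-y|^2)\Sb_0(x)$ produces $-\tfrac2\sigma\phi_\sigma'(|x-y|^2)\,\Sb_0(x)\otimes(x-y)$. For the second summand $\tfrac2\sigma\phi_\sigma'(|x-y|^2)\,\Sb_1(x)(x-y)$ I would apply the product rule, treating $\phi_\sigma'(|x-y|^2)$ as a scalar factor and $\Sb_1(x)(x-y)$ as a vector factor with Jacobian $\partial_y[\Sb_1(x)(x-y)]=-\Sb_1(x)$. This yields
\begin{align*}
\partial_y\!\left[\tfrac2\sigma\phi_\sigma'(|x-y|^2)\Sb_1(x)(x-y)\right]
=-\tfrac2\sigma\phi_\sigma'(|x-y|^2)\Sb_1(x)-\tfrac{4}{\sigma^2}\phi_\sigma''(|x-y|^2)\bigl(\Sb_1(x)(x-y)\bigr)\otimes(x-y).
\end{align*}
Collecting the three contributions and integrating over $\mathcal X$ gives exactly \eqref{eq:formula_grad}.

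For the bound \eqref{eq:grad_estimate} I would estimate the two integrals of \eqref{eq:formula_grad} in the $L_\infty(\mathcal X,\R^{d,d})$-norm. Abbreviating $R:=\diam(\mathcal X)$ so that $|x-y|\le R$, the first integral is immediately $O(1/\sigma)$: bounding $|\phi_\sigma'|\le\|\phi'\|_\infty$ and $|x-y|\le R$ gives $\tfrac2\sigma\|\phi'\|_\infty\bigl(R\|\Sb_0\|_{L_1}+\|\Sb_1\|_{L_1}\bigr)$. The hard part will be the second integral, whose prefactor $4/\sigma^2$ naively yields only $O(1/\sigma^2)$; to recover the sharper $O(1/\sigma)$ I would not bound $\phi_\sigma''$ by a constant, but instead exploit the cancellation between the factor $|x-y|^2$ and the argument $|x-y|^2/\sigma$ of $\phi''$, namely
\begin{align*}
|x-y|^2\,\bigl|\phi_\sigma''(|x-y|^2)\bigr|
=\sigma\cdot\frac{|x-y|^2}{\sigma}\,\Bigl|\phi''\!\Bigl(\tfrac{|x-y|^2}{\sigma}\Bigr)\Bigr|
\le\sigma\,\sup_{t\ge0}t\,|\phi''(t)|,
\end{align*}
which gains one power of $\sigma$, so the second integral is bounded by $\tfrac{4}{\sigma}\bigl(\sup_{t\ge0}t|\phi''(t)|\bigr)\|\Sb_1\|_{L_1}$. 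Adding the two estimates yields the claim with a constant $c$ depending only on $R$, $\|\Sb_0\|_{L_1}$, $\|\Sb_1\|_{L_1}$, $\|\phi'\|_\infty$ and $\sup_{t\ge0}t|\phi''(t)|$. I expect the only genuine subtlety to be this second estimate: plain $C^2$-regularity of $\phi$ does not by itself suffice, and one must invoke the boundedness of $\phi'$ and of $t\mapsto t\phi''(t)$ on $[0,\infty)$, which does hold for the decaying and compactly supported radial kernels considered here.
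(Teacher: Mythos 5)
Your derivation of \eqref{eq:formula_grad} is exactly the paper's: the paper disposes of the formula with ``follows by direct computation from \eqref{eq:formula_radial_kernel}'', and your chain-rule/product-rule computation is that computation, carried out correctly. For the estimate \eqref{eq:grad_estimate}, however, you take a genuinely different route, and the comparison is instructive. The paper never uses decay of $\phi$: it sets $r=\diam(\overbar D)$ and observes that for $\sigma\ge r$ the arguments $|x-y|^2/\sigma$ stay in a fixed compact set, so $\phi_\sigma,\phi_\sigma',\phi_\sigma''$ are bounded uniformly in $\sigma$; crude estimation then yields a bound of the form $C/\sigma + C/\sigma^2$, which is of order $1/\sigma$ only in the regime $\sigma\ge r$ where $1/\sigma^2\le 1/(r\sigma)$. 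Thus the paper's argument proves the claim under the bare $C^2$ hypothesis but only for $\sigma$ bounded away from zero (its closing phrase about ``passing to the limit $\sigma\searrow 0$'' does not repair this: for small $\sigma$ the arguments leave every compact set and the $1/\sigma^2$ term is no longer dominated by $c/\sigma$); this suffices for the intended application, the behaviour as $\sigma\nearrow\infty$ in the companion corollary. Your argument buys the opposite: the cancellation $|x-y|^2\,|\phi''(|x-y|^2/\sigma)|\le\sigma\sup_{t\ge0}t|\phi''(t)|$ makes the second integral genuinely $O(1/\sigma)$ uniformly for \emph{all} $\sigma>0$, which is what the corollary literally asserts, but at the price of the additional hypotheses $\|\phi'\|_\infty<\infty$ and $\sup_{t\ge0}t|\phi''(t)|<\infty$, which do not follow from $\phi\in C^2(\R)$ (nor from the reproducing-kernel property) but do hold for the Gaussian and compactly supported Wendland kernels the paper actually uses. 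You flag this honestly, and rightly so: under the hypotheses as literally stated, the ``for all $\sigma>0$'' claim needs exactly the kind of decay condition you impose, so your proof is the more faithful one to the statement, while the paper's is the more economical one for the regime it needs.
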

\begin{proof}
    Equation \eqref{eq:formula_grad} follows by direct computation from 
    \eqref{eq:formula_radial_kernel}.
    
    We now prove \eqref{eq:grad_estimate}. Since $\phi$ is $C^2$, it (and its first and second derivative) attains its maximum on the closed 
    unit ball $\overbar B_1(0)$ centered at the origin in $\R^d$. 
    Let $r:=\textsf{diam}(\overbar{D})$ denote the (finite) diameter of $\overbar{D}$. 
    Then for all $\sigma \ge r$ and all $x,y\in \overbar{D}$ we have
    $ \left|\frac{x-y}{\sigma}\right|\le 1.  $
    Hence, there is a constant $C>0$ so that for all $\sigma\ge r$,
    $$
    \sup_{x,y\in \overbar{D}} |\phi_\sigma(x-y)| + \sup_{x,y\in \overbar{D}} |\phi_\sigma'(x-y)|  + \sup_{x,y\in \overbar{D}}|\phi_\sigma''(x-y)| \le C.
    $$
    Thus, we obtain ($\Sb_1$ and $\Sb_0$ are extended by zero outside of 
    $\mathcal X$)
    \begin{align*}  
    |\divv_y\nabla^\sigma J(\Omega)(y)|  \le &  \int_D \frac{2}{\sigma} \left(|\phi_\sigma'(|x-y|^2)|( |\Sb_0(x)||x-y| +  |\Sb_1(x)|\right) \, dx\\
    & + \int_D \frac{4}{\sigma^2} \phi_\sigma''(|x-y|^2) 
    |\Sb_1(x)||x-y|^2\, dx \\
    \le & \int_D \frac{2}{\sigma} \left( |\Sb_0(x)||x-y| +  
    |\tr(\Sb_1(x))|\right) + \frac{4}{\sigma^2} |\Sb_1(x)||x-y|^2\, dx\\
    \le & \frac{C}{\sigma} \int_D  \left(|\Sb_0(x)| +  
    |\tr(\Sb_1(x))|\right) + \frac{C}{\sigma^2}\int_D  |\Sb_1(x)|\, dx,
    \end{align*}
    where the constant $C$ only depends on $r$. Finally taking the supremum on 
    both sides and passing to the limit $\sigma \searrow 0$ gives the 
    desired result \eqref{eq:grad_estimate}. 
\end{proof}

\begin{corollary}
    Let $\phi\in C^2(\R)$ be as in the previous lemma and suppose that 
    $\mathcal X\subset \overbar{D}$ is open.	
    Then the divergence of $\nabla^\sigma J(\Omega)
    $ is given pointwise in $\mathcal X$ by 
    \ben\label{eq:formula_div}
    \begin{split}
        \divv_y(\nabla^\sigma J(\Omega))(y)  =& - \int_{\mathcal X} \frac{2}{\sigma} \left( 
        \phi_\sigma'(|x-y|^2)( \Sb_0(x)\cdot (x-y) +  \tr(\Sb_1(x))\right) \, dx\\
        & - \int_{\mathcal X} \frac{4}{\sigma^2} \phi_\sigma''(|x-y|^2) 
        \Sb_1(x)(x-y)\cdot (x-y)\, dx.  
    \end{split}
    \een
    Moreover, there is a constant $c>0$, so that for all $\sigma >0$, 
    \ben\label{eq:divv_estimate}
    \|\divv_y\nabla^\sigma J(\Omega)\|_{L_\infty(\mathcal X)} \le \frac{c}{\sigma}  
    \een
    and
    \ben\label{eq:conv_div}
    \divv_y\nabla^\sigma J(\Omega) \rightarrow 0 \quad \textsf{ in } L_\infty(\mathcal X,\R^d) \textsf{ as } \sigma \nearrow \infty.  
    \een
\end{corollary}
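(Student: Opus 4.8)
The plan is to read off all three claims from the Jacobian formula \eqref{eq:formula_grad} of Corollary~\ref{cor:divergence_radial}, exploiting that the divergence of a vector field is the trace of its gradient, $\divv_y(\nabla^\sigma J(\Omega)) = \tr\big(\partial_y(\nabla^\sigma J(\Omega))\big)$.

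First I would establish the representation \eqref{eq:formula_div} by applying the trace to both sides of \eqref{eq:formula_grad}. Since the trace is linear and commutes with the integral, it suffices to trace the three tensorial integrands. Using the identity $\tr(a\otimes b)=a\cdot b$ for $a,b\in\R^d$, the term $\Sb_0(x)\otimes(x-y)$ contributes $\Sb_0(x)\cdot(x-y)$, the term $\Sb_1(x)$ contributes $\tr(\Sb_1(x))$, and the term $(\Sb_1(x)(x-y))\otimes(x-y)$ contributes $\Sb_1(x)(x-y)\cdot(x-y)$. Collecting these with the prefactors $2/\sigma$ and $4/\sigma^2$ gives \eqref{eq:formula_div} directly.

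Next, for the bound \eqref{eq:divv_estimate} I would estimate the integrand of \eqref{eq:formula_div} pointwise, exactly as in the proof of Corollary~\ref{cor:divergence_radial}, where this divergence estimate was in fact already carried out. Writing $r:=\diam(\overbar{D})$, one uses $|x-y|\le r$ for $x,y\in\overbar{D}$ to get $|\Sb_0(x)\cdot(x-y)|\le r|\Sb_0(x)|$ and $|\Sb_1(x)(x-y)\cdot(x-y)|\le r^2|\Sb_1(x)|$, together with the uniform boundedness of $\phi_\sigma'$ and $\phi_\sigma''$. After pulling out the factors $1/\sigma$ and $1/\sigma^2$ and using $\Sb_0,\Sb_1\in L_1(\mathcal X)$, the pointwise bound takes the shape $C\sigma^{-1}\int_{\mathcal X}(|\Sb_0|+|\tr(\Sb_1)|)\,dx + C\sigma^{-2}\int_{\mathcal X}|\Sb_1|\,dx$; taking the supremum over $y\in\mathcal X$ and absorbing the $\sigma^{-2}$ term into $\sigma^{-1}$ (for $\sigma$ bounded below) yields \eqref{eq:divv_estimate}.

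Finally, \eqref{eq:conv_div} is an immediate consequence of \eqref{eq:divv_estimate}: since $\|\divv_y\nabla^\sigma J(\Omega)\|_{L_\infty(\mathcal X)}\le c/\sigma$, letting $\sigma\nearrow\infty$ sends the right-hand side, and hence the left-hand side, to zero. I expect the only delicate point to be the uniform control of $\phi_\sigma'$ and $\phi_\sigma''$: since $\phi\in C^2(\R)$ is only assumed continuous on the unbounded line, one must observe that the arguments $|x-y|^2/\sigma\le r^2/\sigma$ lie in a fixed compact interval once $\sigma$ is bounded below (and collapse to $0$ as $\sigma\nearrow\infty$), so that continuity of $\phi'$ and $\phi''$ supplies the required uniform bound $C$. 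This is precisely the argument reused from the preceding corollary, so I anticipate no genuine new difficulty.
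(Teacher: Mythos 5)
Your proposal is correct and follows essentially the same route as the paper: the paper infers \eqref{eq:formula_div} directly from \eqref{eq:formula_grad} via the tensor identities $\divv(v)=\partial v:I$ and $A:b\otimes c=b\cdot Ac$ (your trace formulation $\divv_y(\nabla^\sigma J(\Omega))=\tr(\partial_y\nabla^\sigma J(\Omega))$ with $\tr(a\otimes b)=a\cdot b$ is the same computation), and then declares the estimate \eqref{eq:divv_estimate} and the limit \eqref{eq:conv_div} obvious, the estimate having in fact already been carried out in the proof of Corollary~\ref{cor:divergence_radial} exactly as you reuse it. Your closing caveat, that uniform control of $\phi_\sigma'$ and $\phi_\sigma''$ needs the arguments $|x-y|^2/\sigma$ confined to a compact interval, i.e.\ $\sigma$ bounded below, matches the paper's own restriction to $\sigma\ge\diam(\overbar D)$ in that proof, so you have not lost anything the paper actually establishes.
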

\begin{proof}
    Using the tensor relations $A:b\otimes c= b\cdot Ac$ and 
    $(a\otimes b)c = a (b\cdot c)$ for all $A\in \R^{d,d}$ and 
    $a,b,c\in \R^d$ and $\divv(v)= \partial v :I$, we 
    infer formula \eqref{eq:formula_div} directly from 
    \eqref{eq:formula_grad}. The rest of the proof is obvious.  
\end{proof}

\begin{remark}
    Formula \eqref{eq:formula_radial_kernel} in conjunction with Corollary~\ref{cor:divergence_radial} allows us to 
    interprete the terms $\Sb_0$ and $\Sb_1$. The term $\Sb_0$ is responsible for "translations" while
    $\Sb_1$ allows for shape deformations.
\end{remark}

We now consider the Gauss kernel for which \eqref{eq:formula_radial_kernel} and \eqref{eq:formula_div} further 
simplify. 
\begin{corollary}
    For the Gauss kernel $K(x,y) := e^{-(x-y)^2/\sigma}I$, $\sigma>0$, the gradient of 
    $J(\Omega)$ at $\Omega$ is given pointwise by 
    \ben\label{eq:gradient_gauss}
    \nabla^\sigma J(\Omega)(y) =\int_D e^{-|x-y|^2/\sigma} \left( 
    \Sb_0(x)-\frac{2}{\sigma} \cdot \Sb_1(x)(x-y) \right)\, dx.
    \een
    Moreover, the divergence is given by 
    $$ \divv_y(\nabla^\sigma J(\Omega)) = \int_D \frac{2}{\sigma} e^{-|x-y|^2/\sigma} \left( 
    \Sb_0(x)\cdot (x-y) - \frac{2}{\sigma} \Sb_1(x)(x-y)\cdot (x-y) 
    +  \tr(\Sb_1(x))\right)\, dx .
    $$
\end{corollary}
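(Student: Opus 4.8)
The plan is to recognise the Gauss kernel as the special instance $\phi(r)=e^{-r}$ of the general radial kernel $K(x,y)=\phi_\sigma(|x-y|^2)I$ with $\phi_\sigma(r)=\phi(r/\sigma)$, so that both assertions follow by direct substitution into the already-established formulas \eqref{eq:formula_radial_kernel} and \eqref{eq:formula_div}. No new analysis is needed; the work is purely algebraic.

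First I would pin down the identification. Since the Gauss kernel is $K(x,y)=e^{-|x-y|^2/\sigma}I$, matching against $\phi_\sigma(|x-y|^2)I$ forces $\phi(s)=e^{-s}$, whence $\phi'(s)=-e^{-s}$ and $\phi''(s)=e^{-s}$. Recalling the paper's convention that $\phi_\sigma'(r)$ denotes $\phi'(r/\sigma)$ (and likewise $\phi_\sigma''$), this gives $\phi_\sigma(|x-y|^2)=e^{-|x-y|^2/\sigma}$, $\phi_\sigma'(|x-y|^2)=-e^{-|x-y|^2/\sigma}$ and $\phi_\sigma''(|x-y|^2)=e^{-|x-y|^2/\sigma}$.

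For the gradient I would insert these into \eqref{eq:formula_radial_kernel}: the $\phi_\sigma$-term yields $e^{-|x-y|^2/\sigma}\Sb_0(x)$ and the $\tfrac{2}{\sigma}\phi_\sigma'$-term yields $-\tfrac{2}{\sigma}e^{-|x-y|^2/\sigma}\Sb_1(x)(x-y)$; factoring out the common exponential produces the claimed formula. For the divergence I would substitute into \eqref{eq:formula_div}: the leading minus signs combine with $\phi_\sigma'=-e^{-|x-y|^2/\sigma}$ to become plus signs, while $\phi_\sigma''=e^{-|x-y|^2/\sigma}$ keeps its sign, and pulling out the common factor $\tfrac{2}{\sigma}e^{-|x-y|^2/\sigma}$ collects the three contributions $\Sb_0(x)\cdot(x-y)$, $\tr(\Sb_1(x))$ and $-\tfrac{2}{\sigma}\Sb_1(x)(x-y)\cdot(x-y)$ into the stated expression.

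The only step demanding attention is the sign bookkeeping forced by the non-standard notation $\phi_\sigma'(r):=\phi'(r/\sigma)$, which is $\phi'$ evaluated at the rescaled argument rather than the derivative of the map $r\mapsto\phi_\sigma(r)$; keeping these two meanings apart is precisely what turns the minus signs in \eqref{eq:formula_div} into plus signs. Since everything else is a routine specialisation of formulas proved above, I expect no genuine obstacle, and the proof to amount to a one-line substitution.
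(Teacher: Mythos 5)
Your proposal is correct and follows exactly the paper's (implicit) argument: the corollary is stated as a direct specialisation of \eqref{eq:formula_radial_kernel} and \eqref{eq:formula_div} to $\phi(s)=e^{-s}$, and your substitution with $\phi_\sigma'(|x-y|^2)=-e^{-|x-y|^2/\sigma}$, $\phi_\sigma''(|x-y|^2)=e^{-|x-y|^2/\sigma}$ reproduces both displayed formulas, signs included. Your remark on the convention $\phi_\sigma'(r):=\phi'(r/\sigma)$ is exactly the right point of care, since the $2/\sigma$ and $4/\sigma^2$ prefactors in the general formulas already absorb the chain-rule factors.
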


\subsection{Finite dimensional reproducing kernel Hilbert spaces} \label{subsec:shape_gradient}
In this section, $J$ is a shape function defined on a subset $\Xi$ of 
$\wp(D)$, $D\subset \R^2$, i.e., we now focus on the two 
dimensional case $d=2$.
Recall our generic assumption that in an open subset $\Omega$ of $D$, the shape derivative is given by \eqref{eq:tensor_kernel}. 
Subsequently, we want to discuss the relation between a finite element space 
and RKHS and spaces generated by radial kernel functions. In the previous 
section, we always started with a reproducing kernel.  Here, we assume
that a finite dimensional Hilbert space is given and we seek
the reproducing kernel. 

\subsubsection*{Reproducing kernels associated with a finite dimensional space $\mathcal V_N(\mathcal X,\R^2)$} 
For a given set  $\mathcal X\subset \overbar{D}$,  let  
$\mathcal V_N(\mathcal X,\R^2)$ be some finite dimensional space of vector 
valued functions defined on $\mathcal X$ and contained in 
$C(\overbar D,\R^d)\cap W^1_1(D,\R^d)$. We assume $$ \{v^1, v^2, \ldots, v^{2N}\}  \textsf{ is a basis (not necessarily orthonormal) of } \mathcal V_N(\mathcal X,\R^2).$$ 
Suppose an inner product $(\cdot,\cdot )_{\mathcal V_N(\mathcal X,\R^2)}$ on $\mathcal V_N(\mathcal X,\R^2)$.
Then $\mathcal V_N(\mathcal X,\R^2)$ is a reproducing kernel Hilbert space 
with the $i$th raw $K_i(x,y) = K(\cdot,y)e_i$ (for $y$ fixed) of the kernel $K(x,y)$ defined as the solution of
$$ (K_i(\cdot,y), X)_{\mathcal V_N(\mathcal X,\R^2)} = X(y)\cdot e_i, \quad \textsf{ for all } X \in \mathcal V_N(\mathcal X,\R^2).	$$
Since $K(x,y)=(K(x,y))^\top$, it follows 
$K^\top(y,\cdot)e_i\in \mathcal V_N(\mathcal X,\R^2)$. 
Then the gradient $\nabla J(\Omega) \in \mathcal V_N(\mathcal X,\R^2)$ of $J$ at 
$\Omega$ is given by 
$$   dJ(\Omega)(X) = (\nabla J(\Omega), X )_{\mathcal V_N(\mathcal X,\R^2)} \quad \textsf{ for all } X \in \mathcal V_N(\mathcal X,\R^2).  $$
For the numerical realisation it is beneficial to have an explicit formula for the gradient in terms of the basis elements: let $A_N := (v^j,v^i)_{\mathcal V_N(\mathcal X,\R^2)}$, $F_N := (dJ(\Omega)(v^1), \ldots, dJ(\Omega)(v^{2N}))^\top$ and $\alpha := (\alpha_1, \ldots, \alpha_{2N})^\top $, then 
\ben\label{eq:formula_gradient_finite}
\nabla J(\Omega) = \sum_{k=1}^{2N} \alpha_k v^k, \qquad \alpha = A_N^{-1}F_N . 
\een
Of course this formula gives the same gradient as \eqref{eq:formula_gradient_H}, i.e., 
\ben
\nabla J(\Omega)(y) = \sum_{i=1}^2\left(\int_{\mathcal X} \Sb_1(x) : \partial_x (K(x,y)e_i)   + \Sb_0(x) \cdot  (K(x,y)e_i)  \, dx \right)e_i, \qquad \textsf{for }i=1,2.  
\een

\subsubsection*{Metrics on $\mathcal V_N(\mathcal X,\R^2)$}
Usually the space $\mathcal V_N(\mathcal X,\R^2)$ is contained in some 
Hilbert space $ \mathcal H(\mathcal X,\R^d) $. Therefore it is natural to 
equip the space  $\mathcal V_N(\mathcal X,\R^2)$ with the inner product 
$(\cdot,\cdot)_{\Hrep}$ from the space $\Hrep$ and to compute the gradient $\nabla J(\Omega)$ with respect to this inner product,
\ben
(\nabla J(\Omega), \varphi)_{\mathcal H(\mathcal X,\R^2)} = \int_D \Sb_1 : \partial \varphi + \Sb_0\cdot \varphi \; dx \quad \textsf{ for all }\varphi\in \mathcal V_N(\mathcal X,\R^2). 	
\een
\begin{example}
    For instance in case $\mathcal X=D\subset \R^2$, the space $\mathcal V_N(\mathcal X,\R^2)$ could comprise 
    conforming Lagrange finite elements contained in $\ac{H}^1(D, \R^2)$; see also below. 
    Then
    \ben
    (\nabla J(\Omega), \varphi)_{H^1(D,\R^2)} = \int_D \Sb_1 : \partial \varphi + \Sb_0\cdot \varphi \; dx \quad \textsf{ for all }\varphi\in \mathcal V_N(D,\R^2). 	
    \een
    In case $dJ(\Omega)$ is supported on $\partial\Omega$, i.e., if $\Sb_0=0$ and $\Sb_1=0$ on $D\setminus \overline \Omega$, then also $\mathcal X=\Omega$ 
    would be an admissible choice. In this case it is sufficient to solve the above 
    variational problem on the domain $\Omega$. 
\end{example}

Given a space $\mathcal V_N(\mathcal X,\R^2)$ as above, the simplest metric on 
it (not induced by the ambient space) can be defined on the basis elements $v^i$ by 
\ben
(v^i,v^j)_{\mathcal V_N(\mathcal X,\R^2)} := \delta_{ij}, \quad i,j\in \{1,2,\ldots, 2N\}.	
\een
More generally, for arbitrary $v,w\in \mathcal V_N(\mathcal X,\R^2)$ we find by definition
$\alpha_i,\beta_i$ in $\R$, $i,j=1,2,\ldots, 2N$,
\ben
v = \sum_{i=1}^{2N} \alpha_i v^i, \quad w = \sum_{i=1}^{2N} \beta_i v^i. 
\een
Then we set
\ben\label{eq:euclid_metric}
\left(v, w \right)_{\mathcal V_N} := \sum_{i,j=1}^{2N}\alpha_i\beta_j \delta_{ij}. 
\een
We will refer to this metric as Euclidean metric.
The gradient of $J$ with respect to this metric is given by 
\ben\label{eq:gradient_eulid}
\nabla J(\Omega) = \sum_{k=1}^{2N} dJ(\Omega)(v^k) v^k. 
\een
It can be readily checked that with the Eulidean metric, the reproducing kernel has 
the form
\ben
K(x,y) = \sum_{l=1}^{2N} v^l(y)\otimes  v^l(x).	 
\een
So $K(x,y)$ is not radial kernel, but it is has only non-zero entries on the diagonal.
An interesting application of the choice ``$\mathcal V_N(D,\R^2)$ = linear Lagrange 
finite elements on $D$'' equipped with the Eulidean metric was proposed in \cite{MR800331}; see also the next section. 
The authors use as descent direction the negative of the gradient defined in 
\eqref{eq:gradient_eulid}  to obtain optimal 
triangulations involving second 
order elliptic PDEs.

\subsubsection*{Building space $\mathcal V_N(\mathcal X,\R^d)$ with finite element spaces}\label{subsect:fem}
Maybe the easiest way to construct a basis for $\mathcal V_N(\mathcal X,\R^2)$ is to use finite elements. 
For simplicity we assume that $\mathcal X$ is a polygonal set. 
Let  $\{\mathcal T_h\}_{h>0}$ denote a family of simplicial triangulations $\mathcal T_h=\{K\}$ consisting 
of triangles $K$ such that 
\ben
\overbar{\mathcal X} = \bigcup \limits_{K\in \mathcal T_h}  K, \quad \forall h >0.
\een
For every element $K\in \mathcal T_h$, $h(K)$ denotes the diameter of  $K$ 
and $\rho(K)$ is the diameter of the largest ball contained in $K$. The maximal 
diameter of all elements is denoted by $h$, i.e., 
$h:=\textnormal{max} \{h(K) \  | \ K\in \mathcal T_h\}.$ Each 
$K\in \mathcal T_h$ consists of three nodes and three edges and we denote the 
set of nodes and edges by 
$\mathcal N_h$ and $\mathcal E_h$, respectively. We assume that 
there exists a positive constant $\varrho >0$, independent of $h$,   such 
that 
\ben
\frac{h(K)}{\rho(K)} \le \varrho
\een
holds for all elements $K \in \mathcal T_h$ and all $h>0$.   Then we 
may define Lagrange finite element functions of order $k\ge 1$ by 
\ben\label{eq:V_h_k}
V_h^k(\mathcal X) := \{v\in C(\overbar{\mathcal X}):\; v_{|K}\in \mathcal P^k(K), \;\forall  K\in \mathcal T_h \}.	
\een
Recall that in the linear case $k=1$ a basis $v^i\in C(\overbar{\mathcal X})$ may be defined via
\ben
x_j\in \mathcal N_h, \quad  v^i(x_j) = \delta_{ij}, \quad i,j=1,\ldots, N_h, 	
\een
where $\delta_{ij}$ denotes the Kroenecker symbol.
We can then define 
$\mathcal V_{N_h}(\mathcal X,\R^2):=V^k_h(\mathcal X)\times V^k_h(\mathcal X)$, that is,
\ben\label{eq:V_N_h}
\mathcal V_{N_h}(\mathcal X,\R^2)  := \textsf{span}\{ v_1e_1, \ldots, v_{N_h}e_1,  v_1e_2, \ldots, v_{N_h}e_2 \}.
\een

\subsubsection*{Building space $\mathcal V_N(D,\R^d)$ using matrix valued kernels}
Let $\{z_1,z_2,\ldots, z_N\}$ be given points in 
$\mathcal X\subset \overbar{D}$ 
and let $K(x,y)$ be a positive definite and symmetric matrix-valued kernel on 
$\mathcal X$. By 
Theorem~\ref{thm:reproducing_existence}, there exists a Hilbert space 
$\mathcal H(\mathcal X,\R^2)$ for which $K$ is the reproducing kernel.
We define the functions
\ben\label{eq:basis_gauss}
v^i(z) :=   K(z,z_i) e_1\qquad\textsf{and}\qquad  v^{N+i}(z) := K(z,z_i) e_2,  
\een
where $i=1,\ldots, N$ and $\{e_1,e_2\}$ denotes the standard basis of $\R^2$. 
As prototype kernel we take the scaled (radial) Gaussian kernel 
(see \eqref{eq:kernel1}-\eqref{eq:kernel4} for other choices)
\ben
K(x,y) :=  e^{-|x-y|^2/\sigma}I.    
\een
Notice that $K$ is positive definite as shown in \cite[Thm. 6.10,p. 74]{MR2131724}.
By construction the functions $v^i$ decay 
exponentially away from $z^i$. The decay rate is determined by the smoothing 
parameter $\sigma>0$. 
We define the finite dimensional space 
\ben\label{eq:V_N_kernel}
\mathcal V_N(\mathcal X,\R^2) := \textsf{span}\{v^1,v^2, \ldots, v^{2N-1}, v^{2N} \}. 
\een
In case $\mathcal X$ is open,
$\mathcal V_N(\mathcal X,\R^2)\subset C^\infty(\mathcal X,\R^2)$.

Recall that the Gauss kernel $k(x,y) = e^{-|x-y|^2/\sigma}$ is a positive 
reproducing kernel (which can be seen by using Fourier transform). Hence, according to Lemma~\ref{lem:product} 
$K(x,y) = e^{-|x-y|^2/\sigma}I$ is a matrix-valued symmetric and positive 
definite reproducing 
kernel. 
The elements of 
$\mathcal V_N(\mathcal X,\R^d)$ are linearly independent and 
also $v^i\in \mathcal H(\mathcal X,\R^d)$.

\subsubsection*{Limiting case $N\rightarrow \infty$ for kernel spaces}
Let $\mathcal V_N(\mathcal X,\R^2)$  be the finite dimensional space defined 
in \eqref{eq:V_N_h} and $\mathcal H(\mathcal X, \R^d)$ the vvRKHS of $K(x,y)$.
We are now interested in the behaviour of $\nabla^{\mathcal V_N}J(\Omega)$ as $N$ tends 
to infinity. Denote by $\nabla^{\mathcal H}J(\Omega)$ the solution of
\ben
(\nabla^{\mathcal H} J(\Omega), \varphi)_{\mathcal H} = dJ(\Omega)(\varphi) \quad \textsf{ 
    for all }\varphi \in \Hrep,
\een
and by $\nabla^{\mathcal V_N}J(\Omega)$ the solution of
\ben
(\nabla^{\mathcal V_N} J(\Omega), \varphi)_{\mathcal H} = dJ(\Omega)(\varphi) 
\quad \textsf{ for all }\varphi \in \mathcal V_N(\mathcal X,\R^2). 
\een
We have seen that $\nabla^{\mathcal H} J(\Omega)$ is given 
by the explicit formula \eqref{eq:formula_gradient_H} while 
$\nabla^{\mathcal V_N}J(\Omega)$ can be computed by \eqref{eq:formula_gradient_finite}. 
\begin{lemma}\label{lem:convergence_VN_V}
    There holds
    \ben
    \lim_{N\rightarrow \infty} \|\nabla^{\mathcal V_N} J(\Omega)-\nabla^{\mathcal H} J(\Omega)\|_{\mathcal H(\mathcal X,\R^2)}=0 
    \een
    and thus in particular
    \ben
    \nabla^{\mathcal V_N} J(\Omega)(x) \rightarrow \nabla^{\mathcal H} J(\Omega)(x)\quad \textsf{ for all } x\in \mathcal X \quad \textsf{ as } N\rightarrow \infty. 	
    \een
\end{lemma}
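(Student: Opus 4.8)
The plan is to recognise the two variational problems as the full and the Galerkin Riesz representation of the \emph{same} functional $dJ(\Omega)$ with respect to the \emph{same} inner product $(\cdot,\cdot)_{\mathcal H}$, and hence to identify $\nabla^{\mathcal V_N}J(\Omega)$ as an orthogonal projection. Indeed, since $\mathcal V_N(\mathcal X,\R^2)\subset \mathcal H$, I may insert any $\varphi\in \mathcal V_N(\mathcal X,\R^2)$ into the defining equation of $\nabla^{\mathcal H}J(\Omega)$, so that $(\nabla^{\mathcal H}J(\Omega),\varphi)_{\mathcal H}=dJ(\Omega)(\varphi)=(\nabla^{\mathcal V_N}J(\Omega),\varphi)_{\mathcal H}$ for all such $\varphi$. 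Subtracting yields the Galerkin orthogonality $(\nabla^{\mathcal H}J(\Omega)-\nabla^{\mathcal V_N}J(\Omega),\varphi)_{\mathcal H}=0$ for every $\varphi\in\mathcal V_N(\mathcal X,\R^2)$, i.e.\ $\nabla^{\mathcal V_N}J(\Omega)$ is precisely the $\mathcal H$-orthogonal projection of $\nabla^{\mathcal H}J(\Omega)$ onto $\mathcal V_N(\mathcal X,\R^2)$.

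From the projection property the best-approximation identity is immediate:
$$\|\nabla^{\mathcal H}J(\Omega)-\nabla^{\mathcal V_N}J(\Omega)\|_{\mathcal H}=\min_{\varphi\in \mathcal V_N(\mathcal X,\R^2)}\|\nabla^{\mathcal H}J(\Omega)-\varphi\|_{\mathcal H}=\operatorname{dist}_{\mathcal H}\big(\nabla^{\mathcal H}J(\Omega),\,\mathcal V_N(\mathcal X,\R^2)\big).$$
Since the spaces form an increasing family, the right-hand side is monotone decreasing, so convergence to zero reduces to the single statement that $\bigcup_N\mathcal V_N(\mathcal X,\R^2)$ is dense in $\mathcal H$.

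The density of the union is the crux of the argument and the place where the real work lies. For the kernel-generated spaces the basis functions are $K(z_i,\cdot)e_j$, $j=1,2$, so $\bigcup_N \mathcal V_N(\mathcal X,\R^2)$ is the span of $\{K(z,\cdot)e_j: z\in\{z_i\}_{i\ge1}\}$ for a sequence of centres $\{z_i\}$ chosen dense in $\mathcal X$. I would first establish that $x\mapsto K(x,\cdot)a$ is continuous from $\mathcal X$ into $\mathcal H$, which follows from continuity of $K$ since $\|K(x,\cdot)a-K(x',\cdot)a\|_{\mathcal H}^2=\big(K(x,x)-K(x,x')-K(x',x)+K(x',x')\big)a\cdot a$ by the reproducing property. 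Using this continuity I can pass from the dense countable set of centres to all of $\mathcal X$, so that $\operatorname{span}\{K(x,\cdot)a:x\in\mathcal X,\ a\in\R^2\}$ lies in the $\mathcal H$-closure of $\bigcup_N\mathcal V_N(\mathcal X,\R^2)$; the earlier density lemma then gives that this span, and hence the union, is dense in $\mathcal H$.

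Finally, the pointwise convergence is a free consequence of the norm convergence: by the lemma that convergence in a vvRKHS implies pointwise convergence (the continuity of the evaluation maps $f\mapsto f(x)\cdot a$), $\|\nabla^{\mathcal V_N}J(\Omega)-\nabla^{\mathcal H}J(\Omega)\|_{\mathcal H}\to0$ forces $\nabla^{\mathcal V_N}J(\Omega)(x)\to\nabla^{\mathcal H}J(\Omega)(x)$ for every $x\in\mathcal X$, which completes the argument.
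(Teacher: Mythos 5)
Your proof is correct, but it takes a genuinely different route from the paper's. You exploit that both variational problems use the \emph{same} inner product $(\cdot,\cdot)_{\mathcal H}$, so that Galerkin orthogonality identifies $\nabla^{\mathcal V_N}J(\Omega)$ as the $\mathcal H$-orthogonal projection of $\nabla^{\mathcal H}J(\Omega)$ onto $\mathcal V_N(\mathcal X,\R^2)$; the claim then reduces to the best-approximation identity $\|\nabla^{\mathcal H}J(\Omega)-\nabla^{\mathcal V_N}J(\Omega)\|_{\mathcal H}=\operatorname{dist}_{\mathcal H}\bigl(\nabla^{\mathcal H}J(\Omega),\mathcal V_N(\mathcal X,\R^2)\bigr)$ and to density of $\bigcup_N\mathcal V_N(\mathcal X,\R^2)$, which you prove for kernel-generated spaces with dense centres via the continuity of $x\mapsto K(x,\cdot)a$ and the paper's span-density lemma. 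The paper instead argues by compactness: testing the discrete equation with $\nabla^{\mathcal V_N}J(\Omega)$ gives the uniform bound $\|\nabla^{\mathcal V_N}J(\Omega)\|_{\mathcal H}\le c$, a subsequence converges weakly, the weak limit is identified with $\nabla^{\mathcal H}J(\Omega)$ by passing to the limit in the discrete equation, and the norm convergence $\|\nabla^{\mathcal V_N}J(\Omega)\|_{\mathcal H}^2=dJ(\Omega)(\nabla^{\mathcal V_N}J(\Omega))\to dJ(\Omega)(\nabla^{\mathcal H}J(\Omega))=\|\nabla^{\mathcal H}J(\Omega)\|_{\mathcal H}^2$ upgrades weak to strong convergence. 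Note that both arguments rest on the same two hypotheses left implicit in the statement: the family $\{\mathcal V_N\}$ must be nested and its union dense in $\mathcal H(\mathcal X,\R^2)$ --- the paper's limit-identification step silently uses exactly this (a fixed test function must lie in $\mathcal V_{N_k}$ for all large $k$, and the limiting identity $(z,\varphi)_{\mathcal H}=dJ(\Omega)(\varphi)$ must extend from the union to all of $\mathcal H$), and without density the discrete gradients converge only to the projection of $\nabla^{\mathcal H}J(\Omega)$ onto the closure of the union. Your approach has the merit of making this hypothesis explicit and verifying it, of avoiding weak compactness and subsequence extraction altogether, and of yielding a quantitative, monotonically decreasing error identity that could be combined with kernel interpolation estimates to give convergence rates; the paper's argument is shorter on the page only because it leaves the density issue unaddressed.
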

\begin{proof}
    By definition of $\nabla^{\mathcal V_N}J(\Omega)$,
    \ben\label{eq:nabla_H_J}
    (\nabla^{\mathcal V_N}J(\Omega), \varphi)_{\mathcal H} = 
    dJ(\Omega)(\varphi)\quad \textsf{ for all } \varphi\in \VN.  
    \een
    On the one hand, the function $\nabla^{\mathcal V_N}J(\Omega)$ is given by \eqref{eq:formula_gradient_finite}.
    Since by construction 
    $\nabla^{\mathcal V_N} J(\Omega)\in \Hrep$, we may use 
    it as   a test function in \eqref{eq:nabla_H_J}, i.e., 
    \ben  \|\nabla^{\mathcal V_N}J(\Omega)\|_{\Hrep}^2 = dJ(\Omega)(\nabla^{\mathcal V_N}J(\Omega)) \le c \|\nabla^{\mathcal V_N}J(\Omega)\|_{\Hrep},    
    \een
    so that $\|\nabla^{\mathcal V_N}J(\Omega)\|_{\Hrep}\le c$ for all $N$. Hence there is a subsequence $N_k$ and $z\in \Hrep$ such that
    $\nabla^{\mathcal V_{N_k}}J(\Omega) \rightharpoonup z $ weakly in  $\Hrep$. 
    This allows us to pass to the limit in \eqref{eq:nabla_H_J} and we obtain by uniqueness of $\nabla^{\mathcal H}J(\Omega)$,
    \begin{align*} 
    \nabla^{\mathcal V_{N_k}}J(\Omega) &\rightharpoonup \nabla^{\mathcal 
        H}J(\Omega) \qquad \textsf{ weakly in } \Hrep \quad \textsf{ as } k\rightarrow \infty.
    \end{align*}
    Since for every sequence $N\rightarrow \infty$ there is a subsequence $N_k$ such that $\nabla^{\mathcal V_{N_k}}J(\Omega) \rightharpoonup z $ weakly in  $\Hrep$, the whole sequence converges weakly.
    On the other hand, it follows from \eqref{eq:nabla_H_J} that
    \ben
    \|\nabla^{\mathcal V_N}J(\Omega)\|_{\Hrep}^2 = dJ(\Omega)(\nabla^{\mathcal V_N}J(\Omega)) \quad \longrightarrow\quad dJ(\Omega)(\nabla^{\mathcal H}J(\Omega)) = \|\nabla^{\mathcal H}J(\Omega)\|_{\Hrep}^2.
    \een
    Now since weak convergence and norm convergence together imply the strong 
    convergence, the claim follows.
\end{proof}
\begin{remark}	
    Let $\mathcal V_N(D,\R^d)$ be the Lagrange finite element space defined in 
    \eqref{eq:V_N_h} and suppose $\mathcal V_N(D,\R^2)\subset \ac{H}^1(D,\R^2)$.
    It is clear from standard finite element analysis that under suitable smoothness assumptions
    \ben
    \lim_{h\searrow 0}\|\nabla^{H^1} J(\Omega) - \nabla^{\mathcal V_{N_h}}J(\Omega) \|_{H^1(D,\R^2)}=0. 
    \een
    For a proof of this claim for a specific problem we refer to \cite{hiptpagan15}. 
\end{remark}

\section{A linear transmission problem}\label{sec:model_problems}
In this section we discuss a simple cost function constrained by a transmission 
problem. Transmission problems are important for applications because
they can be used to formulate inverse problems such as electrical impedance problems; see \cite{MR2329288,lauraindistributed}. 

\subsection{Problem formulation} 
We are interested in minimising the cost function
\begin{equation}
\label{eq:cost_func}
\min_{\Omega} J(\Omega) = \int_D |u-u_d|^2\; dx \quad \textsf{ over }  \Xi,
\end{equation}
where $\Xi\subset \wp(D)$ is some admissible set and $u=u(\Omega)$ is the 
(weak) solution of the transmission problem
\begin{equation}\label{eq:state_strong_lin}
\begin{split}
-\divv(\beta_+\nabla u^+)  &= f  \quad \textsf{ in } \Omega^+, \\
-\divv(\beta_-\nabla u^-)  &= f \quad \textsf{ in }  \Omega^-, \\
u&=0\quad \textsf{ on } \partial D, 
\end{split} 
\end{equation}
supplemented by the transmission conditions
\begin{equation}\label{eq:state_strong_lin-1}
\beta_+ \partial_n u^+=\beta_- \partial_nu^- \quad \textsf{ and } \quad u^+=u^-\quad \textsf{ on } \Gamma.
\end{equation}
The appearing data in the previous equation is specified by the following assumption. 
\begin{assumption}
    \begin{itemize}
        \item the set $D\subset \R^d$  is a bounded domain with boundary 
        $\partial D$
        \item for every open open subset $\Omega \subset D$, we use the 
        notation $\Omega^+:=\Omega$ and   
        $\Omega^-:=D\setminus \overbar{\Omega}$ 
        \item the \emph{interface} is defined by $\Gamma=\partial{\Omega}^-\cap \partial{\Omega}^+$, so 
        if $\Omega\subset\subset D$, then $\Gamma=\partial\Omega$
        \item the functions $f,u_d$ belong to $H^1(D)$
        \item $\beta^+,\beta^->0$ are positive numbers 
    \end{itemize}
\end{assumption}

Finally let us recall the variational formulation of 
\eqref{eq:state_strong_lin}-\eqref{eq:state_strong_lin-1}
\ben\label{eq:trans_state_lin}
\int_{D} \beta_\chi\nabla u\cdot \nabla v \, dx  =  \int_{D} f  \varphi  \, dx  \quad \textsf{ for all } v \in \ac{H}^1(D),
\een
where $\beta_\chi:=\beta_+\chi + \beta_-(1-\chi)$.
\begin{remark}
    The well-posedness of the optimisation problem \eqref{eq:cost_func} subject to 
    \eqref{eq:trans_state_lin} can be 
    achieved by adding a perimeter term or Sobolev perimeter.  We will not discuss 
    that issue any further here and refer to 
    \cite{MR2731611} and also \cite{SturmHoemHint13,MR3374631}. Other 
    methods to obtain well-posedness include to impose a volume constraint; 
    cf. \cite[p. 225, Section 3.5]{MR3374631}.
\end{remark}
\subsection{Shape derivative}
Let us now prove the shape differentiability of $J$ given by 
\eqref{eq:cost_func} at all open sets $\Omega\subset D$. At first we need a lemma:
\begin{lemma}\label{lemma:phit}
    Let $D\subseteq \R^d$ be open and bounded and suppose $X\in \ac C^1(\overbar D, \R^d)$.
    \begin{itemize} 
        \item[(i)] We have
        \begin{align*}
        \frac{ \partial \Phi_t - I}{t} \rightarrow  \partial X \quad 
        &\textsf{ and }\quad   \frac{ 
            \partial \Phi_t^{-1} - I}{t} \rightarrow  - \partial X && 
        \textsf{ strongly in } C(\overline D, \R^{d,d})\\
        \frac{ \det(\partial \Phi_t) - 1}{t} \rightarrow & \divv(X) && \textsf{ strongly in } C(\overline D).
        \end{align*}
        \item[(ii)] 	For all open sets $\Omega \subseteq D$ and all $\varphi\in W^1_\mu(\Omega)$, $\mu \ge 1$, we have 
        \begin{align}
        \frac{\varphi\circ \Phi_t  - \varphi}{t} \rightarrow & \nabla \varphi \cdot X
        && \textsf{ strongly in }  L_\mu(\Omega).
        \end{align}
    \end{itemize}
\end{lemma}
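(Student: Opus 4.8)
The plan is to treat (i) through the linearised flow equation and (ii) by a density argument reducing everything to smooth functions.

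For (i), I would first record that since $X\in \ac C^1(\overbar D,\R^d)$, differentiable dependence on initial data guarantees that $\Phi_t$ is $C^1$ in space and that $A_t:=\partial\Phi_t$ is jointly continuous in $(t,x)$ and solves the linearised equation $\partial_t A_t=(\partial X\circ\Phi_t)A_t$ with $A_0=I$. Writing $M:=\|\partial X\|_{C(\overbar D)}$, Gronwall applied to $\|A_t\|\le 1+M\int_0^t\|A_s\|\,ds$ yields $\|A_t\|\le e^{Mt}$ uniformly in $x$, whence $\|A_t-I\|\le Mt\,e^{Mt}$ and $A_t\to I$ uniformly as $t\searrow 0$. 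Integrating the linearised equation gives the averaged representation $\frac{A_t-I}{t}=\frac1t\int_0^t(\partial X\circ\Phi_s)A_s\,ds$; since $|\Phi_s(x)-x|\le s\|X\|_\infty$ forces $\Phi_s\to\id$ uniformly and $\partial X$ is uniformly continuous on the compact set $\overbar D$, the integrand tends to $\partial X$ uniformly, and averaging preserves this, giving the first convergence.

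For the remaining two limits in (i) I would not re-run the ODE but exploit $A_t-I\to 0$ uniformly. Writing $\frac{A_t^{-1}-I}{t}=-A_t^{-1}\frac{A_t-I}{t}$ and noting that $A_t^{-1}\to I$ uniformly (Neumann series, valid once $\|A_t-I\|\le\frac12$) yields $\frac{A_t^{-1}-I}{t}\to-\partial X$. For the Jacobian, Jacobi's formula gives $\partial_t\det A_t=\det A_t\,\tr(A_t^{-1}(\partial X\circ\Phi_t)A_t)=\det A_t\,(\divv X\circ\Phi_t)$ by cyclicity of the trace, and the same averaging argument delivers $\frac{\det A_t-1}{t}\to\divv X$ uniformly.

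Part (ii) is where the real work sits, and the plan is threefold: (a) prove the identity for smooth $\varphi$, (b) establish a bound on the difference quotient that is uniform in $t$, and (c) pass to general $\varphi$ by density. For $\varphi\in C^1(\overbar D)$ the fundamental theorem of calculus and the chain rule give $\frac{\varphi\circ\Phi_t-\varphi}{t}=\frac1t\int_0^t(\nabla\varphi\cdot X)\circ\Phi_s\,ds$, which converges uniformly to $\nabla\varphi\cdot X$ exactly as in (i); since $\Omega$ has finite measure this upgrades to $L_\mu(\Omega)$. For the uniform bound I would estimate $|\frac{\varphi\circ\Phi_t-\varphi}{t}|\le\|X\|_\infty\frac1t\int_0^t(|\nabla\varphi|\circ\Phi_s)\,ds$ pointwise, take the $L_\mu(\Omega)$ norm, apply Minkowski's integral inequality, and change variables $y=\Phi_s(x)$; since $\det\partial\Phi_s\to 1$ uniformly its inverse is bounded (say by $2$ for small $s$), which bounds $\|\frac{\varphi\circ\Phi_t-\varphi}{t}\|_{L_\mu(\Omega)}$ by $C\|X\|_\infty\|\nabla\varphi\|_{L_\mu}$ with $C$ independent of $t$ and of $\varphi$. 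Finally I would approximate $\varphi\in W^1_\mu$ by smooth $\varphi_k$ in the $W^1_\mu$ norm (this is precisely the definition of $W^1_\mu$ used in the paper) and run a three-term estimate: the difference-quotient term for $\varphi-\varphi_k$ is controlled by (b), the limit term $\nabla(\varphi-\varphi_k)\cdot X$ by $\|X\|_\infty\|\nabla(\varphi-\varphi_k)\|_{L_\mu}$, and the middle term $\frac{\varphi_k\circ\Phi_t-\varphi_k}{t}-\nabla\varphi_k\cdot X$ vanishes as $t\searrow 0$ for fixed $k$ by (a); choosing $k$ large and then $t$ small closes the argument. The genuinely delicate point, and the main obstacle, is the change-of-variables/Jacobian step underpinning the uniform bound, together with the tacit requirement that $\varphi\circ\Phi_s$ make sense on $\Omega$ — this is why it is essential that $X$ vanish on $\partial D$, so that $\partial D$ is flow-invariant and $\Phi_s$ maps $D$ into $D$.
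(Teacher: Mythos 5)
The paper states Lemma~\ref{lemma:phit} \emph{without proof} (it is invoked as a standard shape-calculus fact, with the arguments implicit in the cited monographs), so there is no in-paper proof to compare against; judged on its own merits, your proof is correct and is essentially the classical argument. Part (i) via the variational equation $\partial_t A_t=(\partial X\circ \Phi_t)A_t$, Gronwall, the averaged integral representation, the identity $A_t^{-1}-I=-A_t^{-1}(A_t-I)$, and Jacobi's formula is exactly right, and your part (ii) follows the standard three-step scheme: smooth case by the fundamental theorem of calculus, a $t$-uniform bound on the difference quotient via change of variables, then density.

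Two points deserve attention. First, in the three-term estimate you apply the uniform bound (b) to $\varphi-\varphi_k$, which is not smooth; the pointwise FTC estimate underlying (b) is only available for smooth functions, so you should add the (routine) remark that for fixed $t$ both sides of (b) are continuous with respect to the $W^1_\mu$-norm, whence the bound extends from the dense class of smooth functions to all of $W^1_\mu$. Second, your closing observation about $\varphi\circ\Phi_s$ being well defined is aimed at the wrong boundary: $X=0$ on $\partial D$ makes $\Phi_s$ map $\overbar D$ into $\overbar D$, but for a proper open subset $\Omega\subsetneq D$ the flow need not map $\Omega$ into $\Omega$, so for $\varphi$ belonging only to $W^1_\mu(\Omega)$ the composition $\varphi\circ\Phi_t$ is in general not even defined on $\Omega$, and likewise your change of variables needs $\nabla\varphi$ on $\Phi_s(\Omega)\not\subseteq\Omega$. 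This is really a defect of the lemma's formulation rather than of your proof: one must read $\varphi$ as (the restriction of) a function in $W^1_\mu(D)$ --- which is how the lemma is actually used in the proof of Theorem~\ref{thm:shape_d_transmission}, where it is applied to $f,u_d\in H^1(D)$ --- and with that reading your argument, including the change of variables $y=\Phi_s(x)$ over $\Phi_s(\Omega)\subseteq D$, goes through verbatim.
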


Now we can prove the shape differentiablity of $J$.
\begin{theorem}\label{thm:shape_d_transmission}
    Let $X\in \ac C^1(\overbar D, \R^2)$ be given and denote by $\Phi_t$ the $X$-flow. 
    The shape derivative of $J$ given by \eqref{eq:cost_func} at 
    a measurable subset $\Omega \subset D $ is given by
    \ben\label{eq:vol_shape_deri_transmission}
    dJ(\Omega)(X) = \int_D \Sb_1(\Omega,u,p):\partial X + \Sb_0(\Omega,u,p) \cdot X\, dx,	
    \een
    where $u\in \ac H^1(D,\R^2)$ solves the state \eqref{eq:trans_state_lin} 
    and $p\in \ac H^1(D,\R^2)$ solves the adjoint state equation
    \ben\label{eq:trans_adjoint_state_lin}
    \int_{D} \beta_\chi \nabla v \cdot \nabla p \, dx  = - \int_D 2(u -u_d) v \, dx \quad \textsf{ for all } v \in \ac{H}^1(D).
    \een
    and  
    \ben\label{eq:Sb_0_Sb_1}
    \begin{split}
        \Sb_1(\Omega,u,p) & = -\beta_\chi\nabla u\otimes \nabla p- \beta_\chi\nabla p \otimes \nabla u + I (\beta_\chi\nabla u \cdot \nabla p - fp +  |u-u_d|^2) \\
        \Sb_0(\Omega,u,p) & = - p\nabla f - 2(u-u_d)\nabla u_d.  	
    \end{split}
    \een
    If $\Gamma\in C^2$, then $u^\pm,p^\pm\in H^2(\Omega^\pm)$,
    \ben
    -\divv(\Sb_1(\Omega,u,p))+\Sb_0(\Omega,u,p)=0 \quad \textsf{ a.e. in } \Omega\quad\textsf{  and } \quad \textsf{ a.e. in }  D\setminus \overbar\Omega, 	
    \een
    and
    \ben\label{eq:boundary_expression_trm}
    dJ(\Omega)(X) = \int_{\Gamma} \lb\Sb_1(\Omega,u,p)\nu \rb \cdot  X \, ds = \int_{\Gamma} \lb\Sb_1(\Omega,u,p)\nu\cdot \nu\rb \; (X\cdot 
    \nu)\, ds,
    \een
    where $\Sb_1^{\pm}(\Omega,u,p) := (\Sb_1(\Omega,u,p))_{|\Omega^{\pm}}$. 
\end{theorem}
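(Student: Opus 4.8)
The plan is to use the averaged adjoint (Lagrangian) method, which yields the volume expression directly and circumvents the need for the material derivative of the state. First I would transport the perturbed problem back to the fixed hold-all $D$. Since $X$ vanishes on $\partial D$, the flow $\Phi_t$ fixes $D$ and moves only the interface $\Gamma$; writing $u^t := u(\Phi_t(\Omega))\circ\Phi_t$ and changing variables $x\mapsto\Phi_t(x)$ turns the state equation \eqref{eq:trans_state_lin} into: find $u^t\in\ac{H}^1(D)$ with
$$\int_D \beta_\chi A_t\nabla u^t\cdot\nabla v\,dx = \int_D \xi_t\,(f\circ\Phi_t)\,v\,dx\qquad\textsf{ for all }v\in\ac{H}^1(D),$$
where $\xi_t:=\det(\partial\Phi_t)$ and $A_t:=\xi_t(\partial\Phi_t)^{-1}(\partial\Phi_t)^{-\top}$. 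Crucially $\beta_\chi$ is left unchanged, since $\chi_\Omega$ is invariant under pulling $\Phi_t(\Omega)$ back to $\Omega$. Correspondingly $j(t):=J(\Phi_t(\Omega)) = \int_D \xi_t\,|u^t-u_d\circ\Phi_t|^2\,dx$, an integral living entirely on the fixed set $D$.

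Next I would introduce the Lagrangian
$$G(t,\varphi,\psi) := \int_D \xi_t|\varphi-u_d\circ\Phi_t|^2\,dx + \int_D \beta_\chi A_t\nabla\varphi\cdot\nabla\psi\,dx - \int_D \xi_t(f\circ\Phi_t)\psi\,dx,$$
so that $j(t)=G(t,u^t,\psi)$ for every $\psi$. The averaged adjoint $p^t$ is defined by $\int_0^1\partial_\varphi G(t,su^t+(1-s)u^0,p^t)\,ds = 0$; in this linear--quadratic setting this collapses at $t=0$ to exactly the adjoint equation \eqref{eq:trans_adjoint_state_lin}, with $p:=p^0$. The differentiability theorem of the averaged adjoint method then gives $dJ(\Omega)(X)=j'(0)=\partial_t G(0,u,p)$, once one checks the uniform coercivity of $\varphi\mapsto\int_D\beta_\chi A_t\nabla\varphi\cdot\nabla\varphi$ for small $t$ (immediate from $A_t\to I$ uniformly) and the strong $t$-differentiability of the data. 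The latter rests on Lemma~\ref{lemma:phit}, which supplies $\partial_t\xi_t|_{t=0}=\divv X$, $\partial_t A_t|_{t=0}=(\divv X)I-\partial X-\partial X^\top$, $\partial_t(f\circ\Phi_t)|_{t=0}=\nabla f\cdot X$ and $\partial_t(u_d\circ\Phi_t)|_{t=0}=\nabla u_d\cdot X$.

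Carrying out $\partial_t G(0,u,p)$ and collecting the $\divv X = I:\partial X$ terms, together with the identities $(\partial X\nabla u)\cdot\nabla p=\partial X:(\nabla p\otimes\nabla u)$ and $(\partial X^\top\nabla u)\cdot\nabla p=\partial X:(\nabla u\otimes\nabla p)$, reproduces exactly $\Sb_1 = -\beta_\chi\nabla u\otimes\nabla p-\beta_\chi\nabla p\otimes\nabla u + I(\beta_\chi\nabla u\cdot\nabla p-fp+|u-u_d|^2)$, while the terms linear in $X$ but not in $\partial X$, namely $-2(u-u_d)(\nabla u_d\cdot X)$ and $-(\nabla f\cdot X)p$, assemble into $\Sb_0=-p\nabla f-2(u-u_d)\nabla u_d$. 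This establishes \eqref{eq:vol_shape_deri_transmission}--\eqref{eq:Sb_0_Sb_1} for merely measurable $\Omega$, as the computation never uses any regularity of $\Gamma$.

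For the second part I would assume $\Gamma\in C^2$ and invoke elliptic regularity for the transmission problem on each subdomain $\Omega^\pm$, where $\beta_\chi$ is constant, to obtain $u^\pm,p^\pm\in H^2(\Omega^\pm)$. With this regularity a direct computation of $\divv\Sb_1^\pm$ — using $\divv(\nabla u\otimes\nabla p)=(\nabla^2 u)\nabla p+(\Delta p)\nabla u$, the analogous identity for $\nabla p\otimes\nabla u$, and $\divv(gI)=\nabla g$ — makes the Hessian contributions cancel, leaving $-\beta_\chi[(\Delta p)\nabla u+(\Delta u)\nabla p]-p\nabla f-f\nabla p+2(u-u_d)(\nabla u-\nabla u_d)$; substituting the strong forms $\beta_\chi\Delta u=-f$ and $\beta_\chi\Delta p=2(u-u_d)$ collapses this to $\Sb_0$, so that $-\divv\Sb_1^\pm+\Sb_0^\pm=0$ a.e. on each side. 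Since then $\Sb_1\in W^1_1$ on each subdomain and the conservation equations hold, the boundary expression \eqref{eq:boundary_expression_trm} follows by integrating by parts on $\Omega^\pm$ exactly as in the passage from \eqref{eq:domain} to \eqref{eq:boundary} (Prop. 3.3 of \cite{lauraindistributed}), the surviving interface terms combining into the jump $\lb\Sb_1\nu\cdot\nu\rb$. I expect the main obstacle to be the rigorous justification of $j'(0)=\partial_t G(0,u,p)$: one must control $t\mapsto u^t$ strongly enough to differentiate under the integral within the averaged adjoint framework, which is precisely where Lemma~\ref{lemma:phit} and the uniform coercivity of the transported bilinear form are indispensable; the $H^2$-regularity across the $C^2$ interface is a secondary technical point needed only for the boundary formula.
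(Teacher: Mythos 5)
Your proposal is correct and takes essentially the same route as the paper: pull the perturbed problem back to the fixed domain, form the Lagrangian $G$ with the transported data $\xi_t$, $A_t$, identify $dJ(\Omega)(X)=\partial_t G(0,u,p)$ — the step the paper outsources to \cite{lauraindistributed,sturm2015shape,MR3374631}, which is precisely the averaged-adjoint machinery you invoke — and, for $\Gamma\in C^2$, use elliptic regularity followed by the elementwise integration by parts of Prop.~3.3 in \cite{lauraindistributed}. The only differences are ones of detail, in your favour: you verify the conservation equation $-\divv\Sb_1^\pm+\Sb_0^\pm=0$ by direct computation rather than by citation, and your expression for $\partial_t G(0,u,p)$ correctly retains the term $-2(u-u_d)(\nabla u_d\cdot X)$, which is missing (apparently a typo) from the paper's display \eqref{eq:partial_t_G} even though it does appear in the paper's final $\Sb_0$.
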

\begin{proof}
    The proof is an adaption of the proof of Proposition 5.2 in 
    \cite{lauraindistributed}. However, let us sketch the ingredients of the proof. At first we consider
    equation \eqref{eq:trans_state_lin} with characteristic function $\chi_{\Omega_t}$, 
    $\Omega_t := \Phi_t(\Omega)$,  
    \ben\label{eq:trans_state_lin_per}
    \int_{D} \beta_{\chi_{\Omega_t}}\nabla u_t\cdot \nabla v \, dx  =  \int_{D} f  
    v \, dx  \quad \textsf{ for all } v \in \ac{H}^1(D).
    \een
    Using $\chi_{\Omega_t} = \chi \circ\Phi_t^{-1}$ and setting 
    $u^t:=u_t\circ \Phi_t$, a change of variables shows \eqref{eq:trans_state_lin_per} is equivalent to
    \ben\label{eq:trans_state_lin_per_2}
    \int_{D} \beta_\chi A(t)\nabla u^t\cdot \nabla v \, dx  =  
    \int_{D} \xi(t) f^t  
    v \, dx  \quad \textsf{ for all } v \in \ac{H}^1(D),
    \een
    where $\xi$ and $A$ are defined in \eqref{eq:notation}.
    Let us introduce the Lagrangian
    \ben\label{eq:lagrangian_G}
    G(t,X,w,v) = \int_D \xi(t)|w - u_d^t|^2 \, dx + \int_{D} \beta_\chi A(t)\nabla w\cdot \nabla v\, dx  -  
    \int_{D} \xi(t) f^t  
    v  \, dx
    \een
    with the definitions
    \ben\label{eq:notation}
    \xi(t):=\det(\partial \Phi_t),\qquad A(t):=\xi(t)\partial \Phi_t^{-1}\partial 
    \Phi_t^{-\top}, \quad f^t :=f\circ\Phi_t, \quad u_d^t = u_d \circ \Phi_t.
    \een
    Thanks to Lemma~\ref{lemma:phit} the derivatiev $\partial_t G(0,v,w)$ exists for all 
    $w,v\in \ac H^1(D,\R^2)$ and is given by
    \ben\label{eq:partial_t_G}
    \partial_t G(0,X,w,v)= \int_D \beta_\chi A'(0)\nabla w\cdot \nabla v + \xi'(0)(|w-u_d|^2 - fv) - \nabla f\cdot X v \;dx.  
    \een
    Now it can be shown that cf. \cite{lauraindistributed,sturm2015shape,MR3374631}
    \ben
    dJ(\Omega)(X) = \dt G(t,X,u^t,p)|_{t=0} = \partial_t G(0,X,u,p),	
    \een
    where $p\in \ac{H}^1(D)$ is the solution of \eqref{eq:trans_adjoint_state_lin}.
    From this and \eqref{eq:partial_t_G} it can be inferred that
    \ben
    dJ(\Omega)(X) = \int_D \Sb_1(\Omega,u,p):\partial X + \Sb_0(\Omega,u,p) \cdot X\, dx	
    \een
    with the definitions 
    \ben
    \Sb_1(\Omega,u,p) = -\beta_\chi\nabla u\otimes \nabla p- \beta_\chi\nabla p \otimes \nabla u + I (\beta_\chi\nabla u \cdot \nabla p - fp +  |u-u_d|^2), 
    \een
    \ben
    \Sb_0(\Omega,u,p) = - p\nabla f - 2(u-u_d)\nabla u_d.  	
    \een
    Now if $\Gamma\in C^2$, then by standard regularity theory we obtain 
    $u^\pm,p^\pm\in H^2(\Omega^\pm)$. Therefore 
    $\Sb_1^\pm(\Omega,u,p)\in W^1_1(\Omega^\pm,\R^{2,2}) $. Thus, Proposition 
    3.3 in \cite{lauraindistributed} shows
    \ben
    dJ(\Omega)(X) = \int_{\Gamma} \lb\Sb_1(\Omega,u,p)\nu \rb \cdot  X \, ds = \int_{\Gamma} \lb\Sb_1(\Omega,u,p)\nu\cdot \nu\rb \; (X\cdot 
    \nu)\, ds
    \een
    and additionally 
    \ben
    -\divv(\Sb_1(\Omega,u,p))+\Sb_0(\Omega,u,p)=0 \quad \textsf{ a.e. in } \Omega\quad\textsf{  and } \quad \textsf{ a.e. in }  D\setminus \Omega. 	
    \een
\end{proof}
\begin{corollary}
    Let $X\in \ac C^{0,1}(\R^2,\R^2)$ with $X=0$ on $\partial D$.
    Then for all measurable $\Omega\subset D$,
    \ben
    \partial_X J(\Omega) :=   \lim_{t\searrow 0} \frac{J((\textsf{id}+tX)(\Omega)) - J(\Omega)}{t} = \int_D \Sb_1(\Omega,u,p):\partial X + \Sb_0(\Omega,u,p) \cdot X\, dx,	
    \een
    where $\Sb_0$ and $\Sb_1$ are given by \eqref{eq:Sb_0_Sb_1}.
\end{corollary}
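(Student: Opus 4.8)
The plan is to re-run the proof of Theorem~\ref{thm:shape_d_transmission} essentially verbatim, with the flow $\Phi_t$ replaced by the perturbation of identity $T_t:=\textsf{id}+tX$, and then to check that every limit invoked there survives the loss of one derivative on $X$. First I would record that, since $X\in \ac C^{0,1}(\R^2,\R^2)$ vanishes on $\partial D$, Rademacher's theorem gives $\partial X\in L_\infty(D,\R^{2,2})$, and that for $t<1/\mathrm{Lip}(X)$ the map $T_t$ is a bi-Lipschitz homeomorphism of $\overbar D$ fixing $\partial D$; in particular the change of variables $u^t:=u_t\circ T_t$ underlying \eqref{eq:trans_state_lin_per_2} remains licit. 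The decisive observation is that $T_t$ carries exactly the same first-order data as the flow: one has the \emph{exact} identity $\partial T_t=I+t\,\partial X$, whence
\ben
\frac{\partial T_t - I}{t} = \partial X,\qquad \frac{\partial T_t^{-1}-I}{t}\to -\partial X,\qquad \frac{\det(\partial T_t)-1}{t}\to \divv(X),
\een
the latter two by the elementary expansions of $(I+t\,\partial X)^{-1}$ and $\det(I+t\,\partial X)$, now valid in $L_\infty(D)$ rather than $C(\overbar D)$ because $\partial X$ is merely bounded. The second part of Lemma~\ref{lemma:phit}, namely $(\varphi\circ T_t-\varphi)/t\to \nabla\varphi\cdot X$ in $L_\mu$, holds for $T_t$ as well by the same density argument (smooth $\varphi$ first, then $W^1_\mu$-limits).

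With these limits in hand I would re-run the Lagrangian computation. Setting $\xi(t):=\det(\partial T_t)$, $A(t):=\xi(t)\,\partial T_t^{-1}\partial T_t^{-\top}$, $f^t:=f\circ T_t$ and $u_d^t:=u_d\circ T_t$ exactly as in \eqref{eq:notation}, the transported state equation and the Lagrangian \eqref{eq:lagrangian_G} keep their form; since $A(t)\to I$ uniformly and stays uniformly elliptic for small $t$, the transported state $u^t$ is well defined and $u^t\to u$, so the averaged-adjoint machinery of \cite{lauraindistributed} applies and yields $\partial_X J(\Omega)=\partial_t G(0,X,u,p)$ with $p$ the adjoint from \eqref{eq:trans_adjoint_state_lin}. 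Because the three limits above coincide with those of Lemma~\ref{lemma:phit}, the expression \eqref{eq:partial_t_G} for $\partial_t G(0,X,u,p)$ is unchanged, and collecting terms reproduces $\Sb_1,\Sb_0$ from \eqref{eq:Sb_0_Sb_1}. This delivers the claimed formula.

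The main obstacle is not the algebra but the bookkeeping of topologies: in Theorem~\ref{thm:shape_d_transmission} the convergences of Lemma~\ref{lemma:phit} take place in $C(\overbar D)$, whereas for Lipschitz $X$ one only has $\partial X\in L_\infty$, so the determinant and inverse expansions must be read in $L_\infty$ (or $L_p$), and one must confirm that this weaker mode of convergence still suffices to differentiate the Lagrangian and to pass to the limit in the state equation. Since all transformation coefficients enter multiplicatively against the $L_1$-tensors $\Sb_0,\Sb_1$ and since uniform ellipticity is preserved, this causes no genuine difficulty; what it does rule out, however, is the tempting shortcut of approximating $X$ by $\ac C^1$ fields, because $\ac C^1$ is not dense in $\ac C^{0,1}$ in the $W^1_\infty$-norm, which is precisely why the direct repetition of the proof is required.
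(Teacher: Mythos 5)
Your proposal follows essentially the same route as the paper's own proof: replace the flow $\Phi_t$ by the bi-Lipschitz perturbation of identity $T_t=\textsf{id}+tX$ (valid for $t<1/\mathrm{Lip}(X)$), verify that the limits of Lemma~\ref{lemma:phit} carry over to $T_t$, and then repeat the Lagrangian argument of Theorem~\ref{thm:shape_d_transmission} verbatim. Your additional care in noting that for merely Lipschitz $X$ these limits must be read in $L_\infty$ rather than $C(\overbar D)$ (and that approximation by $\ac C^1$ fields is unavailable) is a correct refinement of a point the paper passes over silently.
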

\begin{proof}
    It is not difficult to check that for $X\in 
    C^{0,1}(\R^2,\R^2)$ with $X=0$ on $\partial D$ the 
    transformation $T_t(X):\R^2\rightarrow \R^2, x\mapsto x+t X(x)$ is bi-Lipschitz for all $t < 1/L$, 
    where $L$ denotes the Lipschitz constant of $X$.  
    Then we notice that items $(i)$ and $(ii)$ of Lemma~\ref{lemma:phit} also hold 
    when we replace $\Phi_t$ by $T_t(X)$. As a consequence $\partial_t G(0,X,w,v)$ exists 
    also in this case for all $v,w\in \ac{H}^1(D)$. 
    Now we can follow the lines of the proof 
    of Theorem~\ref{thm:shape_d_transmission} only replacing the flow 
    $\Phi_t$ by the transformation $T_t(X)$. 
\end{proof}
\begin{remark}
    Notice that the transformation $T_t(X)$ is the $\tilde X$-flow of the time-dependent vector field 
    $\tilde X(x,t):=X\circ (\textsf{id}+tX)^{-1}(x)$, that is, $\Phi_t^{\tilde X}=T_t(X)$; cf. \cite[Chapter 4]{MR2731611}. It is important to note 
    that $dJ(\Omega)(X)$ may not be well-defined for all $X\in \ac 
    C^{0,1}(\R^2,\R^2)$ with $X=0$ on $\partial D$ as $t\mapsto 
    \partial \Phi_t^X$ is not differentiable in $C(\overbar D,\R^{2,2})$ at $t=0$. 
\end{remark}

\subsection{Discretised shape derivative}\label{subsec:discretised}
In the recent article \cite{MR2642680} the relationship between the analytical and discretised 
shape derivative has been studied for a specific model problem. The rigorous 
numerical analysis was carried out in \cite{MR3348199}.
Here we want to recast these results in terms of our tensor representation of 
the shape derivative. 

\subsubsection*{Finite element approximation}
Suppose that $D$ is a polygonal set.  Let $V_h^k$, $k\ge 1$, be the space defined in \eqref{eq:V_h_k}.
Then the finite element approximation of state equation 
\eqref{eq:trans_state_lin} and the adjoint state equation 
\eqref{eq:trans_adjoint_state_lin} reads:
\ben\label{eq:approx_state_adjoint}
\begin{split}
    \int_{D} \beta_\chi\nabla u_h\cdot \nabla \varphi \, dx  & =  \int_{D} f  
    \varphi  \, dx  \quad \textsf{ for all } \varphi \in V_h^k \\ 
    \int_{D} \beta_\chi \nabla \varphi \cdot \nabla p_h \, dx  & = - \int_D 2(u_h -u_d)\varphi \, dx \quad \textsf{ for all } \varphi \in V^k_h.
\end{split}
\een
With the discretised state and adjoint state equation the discretised version 
of the shape derivative given by \eqref{eq:vol_shape_deri_transmission} reads 
\ben\label{eq:vol_discrete}
dJ_{h}^{vol}(\Omega)(X) = \int_{D} \Sb_1^h:\partial X + \Sb_0^h \cdot X\, dx,	
\een
with 
\ben\label{eq:tensor_Sb_1_h}
\Sb_1^h := \Sb_1(\Omega,u_h,p_h) = -\beta_\chi\nabla u_h\otimes \nabla p_h- \beta_\chi\nabla p_h \otimes \nabla u_h + I (\beta_\chi\nabla u_h \cdot \nabla p_h - fp_h +  |u_h-u_d|^2),
\een
\ben\label{eq:tensor_Sb_0_h}
\Sb_0^h := \Sb_0(\Omega,u_h,p_h) = - p_h\nabla f - 2(u_h-u_d)\nabla u_d.  	
\een
\subsubsection*{Comparison of discretised domain and boundary expression}
At first we observe that the discretised volume expression $dJ^{vol}_h(\Omega)$ 
given by \eqref{eq:vol_discrete} does not have the
nice property to be supported on the boundary $\Gamma$ even for smooth 
vector fields: 
there exists $X\in \ac C^{0,1}(\overbar{D},\R^2)$ so that 
$dJ_{h}^{vol}(\Omega)(X)\ne 0 $
and there exists at least one point $x$ in $\Omega \cup
(D\setminus \Omega) $, so that
$
-\divv((\Sb_1^h)^{\pm}) + (\Sb_0^h)^{\pm}  \ne  0.  
$
Therefore $dJ^{vol}_h(\Omega)$ is not equivalent to its discretised boundary 
counterpart
\ben
dJ_h^{bd,1}(\Omega)(X) := \int_{\Gamma} \lb \Sb_1^h\nu\cdot\nu \rb  (\nu\cdot 
X) \; ds
\een
for $X\in \ac C^1(\overbar D,\R^2)$.
Recall that the boundary expression of $dJ(\Omega)$ in the continuous case was computed in 
\eqref{eq:boundary_expression_trm} and reads 
\ben
dJ(\Omega)(X) = \int_{\Gamma} \lb\Sb_1\nu\rb\cdot \nu \; (X\cdot \nu)\, ds. 
\een
Moreover, we have the following equivalence (cf. \cite{lauraindistributed}) 
\ben
\int_{\Gamma} \lb\Sb_1 \nu\cdot \nu\rb \; (X\cdot \nu)\, ds  = \int_{\Gamma} 
\lb \Sb_1\nu \rb \cdot X \, ds, \quad X\in \ac C^1(\overbar D,\R^2).    
\een
Accordingly there is another possible way to discretise the boundary expression:
\ben
dJ_h^{bd,2}(\Omega)(X) := \int_{\Gamma} \lb \Sb_1^h\nu \rb  \cdot X \; ds,
\een
which neither conicides with $dJ_h^{bd,1}(\Omega)(X)$ nor with 
$dJ_h^{vol}(\Omega)(X)$. 
In fact we can prove by partial integration that the three previously 
introduced discretisations of the shape derivative are related. 

Recall 
that $\mathcal E_h$ denotes the edges of the triangulation $\mathcal T_h$ of 
$D$.  
\begin{lemma}
    Let $\Omega\subset D$ be a polygonal domain, so that, 
    $\partial \Omega=\cup_{\substack{E\in \mathcal E_h \\ \mathcal E_h\cap \Gamma\neq \emptyset}}\{E\}$
    .  We have for all $X\in \ac C^1(\overbar D,\R^2)$ 
    \begin{align} dJ_{h}^{vol}(\Omega)(X)  = &  
    dJ_h^{bd,2}(\Omega)(X)  +   \sum_{K\in \mathcal T_h}    \int_{K} (-\divv(\Sb_1^h) + \Sb_0^h) 
    \cdot X\, dx \\
    &  +    \sum_{\stackrel{E\in \mathcal E_h}{E\not\subset \Gamma }}\int_{E} \lb \Sb_1^h \nu_{ E} \rb \cdot  X \, ds, 
    \end{align}
    or equivalently 
    \begin{equation}
    \begin{split} dJ_{h}^{vol}(\Omega)(X)  = &  dJ_h^{bd,1}(\Omega)(X)  +  \sum_{K\in \mathcal T_h}    \int_{K} (-\divv(\Sb_1^h) + \Sb_0^h) 
    \cdot X\, dx  \\
    &  +    \sum_{\stackrel{E\in \mathcal E_h}{E\not\subset \Gamma }}\int_{E} \lb \Sb_1^h \nu_{E} \rb \cdot  X \, ds  + \int_{\Gamma} \lb\Sb_1^h \nu_\Gamma  - \Sb_1^h 
    \nu_\Gamma \cdot \nu_\Gamma\rb  \cdot  X \, ds. 
    \end{split}
    \end{equation}
\end{lemma}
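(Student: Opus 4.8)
The plan is to integrate by parts triangle by triangle and then reassemble the interface contributions. First I would record that, although $f$ and $u_d$ are only in $H^1(D)$, the finite element functions $u_h,p_h\in V_h^k$ are polynomial on every $K\in\mathcal T_h$; hence the products defining $\Sb_1^h$ in \eqref{eq:tensor_Sb_1_h} restrict to $W^1_1(K,\R^{2,2})$ on each element, and the matrix divergence theorem is applicable elementwise. Concretely, for $X\in\ac C^1(\overbar D,\R^2)$ I would use
\benn
\int_K \Sb_1^h:\partial X\,dx = -\int_K \divv(\Sb_1^h)\cdot X\,dx + \int_{\partial K}(\Sb_1^h\nu_K)\cdot X\,ds,
\eenn
where $\nu_K$ is the outward unit normal of $K$ and $\divv$ acts row-wise. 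Summing over $K\in\mathcal T_h$ and adding the unchanged volume term $\int_D\Sb_0^h\cdot X\,dx$ turns $dJ_h^{vol}(\Omega)(X)$ into the sum of the elementwise residual integrals $\sum_K\int_K(-\divv(\Sb_1^h)+\Sb_0^h)\cdot X\,dx$ and the collected edge integrals $\sum_K\int_{\partial K}(\Sb_1^h\nu_K)\cdot X\,ds$.

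The second step is to reorganise the edge integrals. Each $E\in\mathcal E_h$ is either shared by two triangles $K^+,K^-$ or lies on $\partial D$; on the latter $X$ vanishes (by definition of $\ac C^1(\overbar D,\R^2)$), so such edges drop out. On an interior edge the outer normals are opposite, $\nu_{K^+}=-\nu_{K^-}$, so after fixing $\nu_E:=\nu_{K^+}$ the two one-sided traces combine into the jump $\int_E\lb\Sb_1^h\nu_E\rb\cdot X\,ds$. Using the hypothesis that $\Gamma=\partial\Omega$ is resolved exactly by the mesh, I would split the interior edges into those contained in $\Gamma$ and those with $E\not\subset\Gamma$. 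The former assemble, with $\nu_E=\nu_\Gamma$, into $\int_\Gamma\lb\Sb_1^h\nu\rb\cdot X\,ds=dJ_h^{bd,2}(\Omega)(X)$, and the latter form the remaining edge sum. This already yields the first identity.

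For the second identity I would only rewrite the boundary term $dJ_h^{bd,2}$ by decomposing the jump on $\Gamma$ into its normal and tangential parts,
\benn
\lb\Sb_1^h\nu_\Gamma\rb\cdot X = \lb\Sb_1^h\nu_\Gamma\cdot\nu_\Gamma\rb(\nu_\Gamma\cdot X) + \big(\lb\Sb_1^h\nu_\Gamma\rb-\lb\Sb_1^h\nu_\Gamma\cdot\nu_\Gamma\rb\nu_\Gamma\big)\cdot X,
\eenn
so that integrating over $\Gamma$ gives $dJ_h^{bd,2}(\Omega)(X)=dJ_h^{bd,1}(\Omega)(X)+\int_\Gamma\lb\Sb_1^h\nu_\Gamma-\Sb_1^h\nu_\Gamma\cdot\nu_\Gamma\rb\cdot X\,ds$; substituting this into the first identity produces the second. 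The main obstacle is not any single estimate but the careful bookkeeping of orientations and jump conventions: one must ensure that the $\Omega^+/\Omega^-$ sign convention underlying $\lb\cdot\rb$ on $\Gamma$ is consistent with the orientation $\nu_E$ chosen on the interior edges, and that every interior edge is counted exactly once with the correct pair of neighbouring triangles, so that the cancellations across interior edges and the identification of the $\Gamma$-edges with $dJ_h^{bd,2}$ are exact.
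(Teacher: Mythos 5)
Your proposal is correct and follows essentially the same route as the paper: elementwise integration by parts of the $\Sb_1^h:\partial X$ term, reassembly of the element boundary integrals into jump terms over interior edges and over $\Gamma$ (with $\partial D$ edges vanishing since $X$ does), which gives the first identity, and then the normal/tangential decomposition of $\lb\Sb_1^h\nu_\Gamma\rb\cdot X$ to pass to the second. If anything, your bookkeeping is slightly more careful than the paper's, which asserts $(\Sb_1^h)_{|K}\in C^\infty(\overbar K)$ even though $f,u_d\in H^1(D)$ enter $\Sb_1^h$, whereas your elementwise $W^1_1$ regularity is exactly what the divergence theorem needs.
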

\begin{proof}
    At first notice that for all $K\in \mathcal T_h$ we have
    $(\Sb_1^h)_{|K} \in C^\infty(\overbar{K}). $ 
    Hence it follows by partial  integration on each element $K\in \mathcal T_h$,
    \begin{align*} dJ_{h}^{vol}(\Omega)(X) & = \int_{D} \Sb_1^h:\partial X + 
    \Sb_0^h \cdot X\, dx \\
    & = \sum_{K\in \mathcal T_h} \int_K \Sb_1^h:\partial X + \Sb_0^h \cdot X\, dx\\
    & = \sum_{K\in \mathcal T_h}    \int_{K} (-\divv(\Sb_1^h) + \Sb_0^h) 
    \cdot X\, dx+ \sum_{K\in \mathcal T_h}    \int_{\partial K} \Sb_1^h\nu_K \cdot  
    X \, ds. 
    \end{align*}
    Now the result follows at once from 
    \ben  \sum_{K\in \mathcal T_h}    \int_{\partial K} \Sb_1^h\nu_K \cdot  
    X \, ds = \sum_{\stackrel{E\in \mathcal E_h}{E\not\subset \Gamma }}   \int_{E} \lb\Sb_1^h\nu_{E}\rb \cdot  
    X \, ds +   \int_{\Gamma} \lb\Sb_1^h\nu\rb \cdot  
    X  \, ds \een
    and by rearranging. 
\end{proof}
Finally we note that 
$dJ_h^{bd,1}(\Omega)(X) \neq dJ_h^{bd,2}(\Omega)(X)$.  
It is clear that in some sense
\ben
dJ(\Omega)(X) \approx dJ_h^{bd,1}(\Omega)(X) \approx dJ_h^{bd,2}(\Omega)(X) \approx dJ_h^{vol}(\Omega)(X).	
\een
For a rigorous error analysis and more details we refer to \cite{MR3348199} and also \cite{P14_562}.

\section{Numerics}
\label{sec:experiments}

This section is devoted to the practical demonstration of vvRKHS based shape optimisation.
The numerical experiments with two different kernels show that this approach is a very efficient and robust numerical tool. We compare these kernel methods with two other typically used 
gradients, the Euclidean gradient and the $H^1$ gradient, both computed in the 
conforming P1 finite element space.
All methods are applied to the transmission problem \eqref{eq:trans_state_lin}.

\subsection{Numerical setting and algorithm}
The subsequent computations are carried out on the domain $D=(0,1)^2$ which is 
in accordance with our assumption in the previous section. In all test cases the set 
$\Omega\subset D$ is assumed to be polygonal.
The initial mesh consists of 900 elements as shown in Figure~\ref{fig:shape-setup} and the interface of the initial circular shape is discretised with 100 equidistant vertices.

In the following, $J$ is the shape function defined in \eqref{eq:cost_func} with shape derivative at
$\Omega\subset D$ (cf. \eqref{eq:vol_discrete}),
\ben
dJ^h_{vol}(\Omega)(X) =  \int_{D} \Sb_1^h(\Omega):\partial X + \Sb_0^h(\Omega) \cdot X\, dx. 
\een
Here $\Sb_0^h(\Omega)=\Sb_0^h(\Omega,u_h,p_h)$ and $\Sb_1^h=\Sb_1^h(\Omega,u_h,p_h)$ are defined in \eqref{eq:tensor_Sb_1_h} and \eqref{eq:tensor_Sb_0_h}, respectively.  They 
are approximations of $\Sb_0(\Omega,u,p)$ and $\Sb_1(\Omega,u,p)$ given by \eqref{eq:Sb_0_Sb_1}.
The approximations $u_h$ and $p_h$ of the adjoint state and the state are given by \eqref{eq:approx_state_adjoint} where we choose 
$k=1$.

\subsubsection*{Standard gradient algorithm}
Suppose some Hilbert space 
$\mathcal V_N(D, \R^2)\subset H^1(D,\R^2)$.
The gradient $\nabla J^h(\Omega)$ of $J$ is computed by 
\ben\label{eq:gradient_discrete}
(\nabla J^h(\Omega), X)_{\mathcal V_N(\mathcal X, \R^2)} =  \int_{D} \Sb_1^h:\partial X + \Sb_0^h \cdot X\, dx\quad \forall X \in \mathcal V_N(\mathcal X, \R^2) 
\een
The basic optimisation algorithm can be described as follows:\\[0.5ex]

\begin{algorithm}[H]
    
    
    
    \KwData{Let $n=0$, $\gamma>0$ and $N\in \N$ be given. Initialise domain $\Omega_0\subset D$, time $t_n=0$.}
    
    
    initialization\;
    
    \While{ $n  \le  N $}{
        
        1.) solve \eqref{eq:gradient_discrete} to obtain  $ \nabla J^h = \nabla J^h(\Omega_{n})$\;
        2.) decrease $t > 0$ until $ J^h((\textsf{id}-t\nabla J^h)(\Omega_n)) < J^h(\Omega_n) $\\ 
        \qquad and set $\Omega_{n+1} \gets (\textsf{id}-t\nabla J)(\Omega_n) $\;
        \eIf{ $J^h(\Omega_n) - J^h(\Omega_{n+1}) \ge \gamma (J^h(\Omega_0) - J^h(\Omega_1))$ }{            
            step accepted: continue program\; 
        }{
        no sufficient decrease: quit\;
    }            
    increase $n \gets n+1$\;
}
\caption{Standard algorithm} \label{eq:algo_euler}
\end{algorithm}

\subsubsection*{Variable metric gradient algorithm} 
Let $\mathcal H^\sigma(D,\R^2)$ be the vvRKHS defined by the radial 
kernel $K(x,y)= \phi_\sigma(|x-y|^2)I$, where we choose 
$\phi$ to be (recall $\phi_\sigma(r):=\phi(r/\sigma)$)
\begin{enumerate}
    \item $\phi_1(r) := e^{r}$
    \item $\phi_2(r) := (1-r)_+^4(4r+1)$.
\end{enumerate} 
Notice that the corresponding RKHS $\mathcal H(D,\R^2)$ is infinite dimensional, depends on $\sigma$ and 
the gradient $\nabla^\sigma J(\Omega)$  of $J$, defined in \eqref{eq:formula_radial_kernel}, in this space also depends on 
$\sigma$. We define the discretised gradient $\nabla^\sigma J^h(\Omega)$ via 
\ben\label{eq:gradient_discrete_sigma}
\nabla^\sigma J^h(\Omega)(y) :=\int_{D} \left(\phi_\sigma(|x-y|^2) 
\Sb_0^h(x)+\frac{2}{\sigma}\phi_\sigma'(|x-y|^2) \Sb_1^h(x) (x-y)  \right)\ dx. 
\een
Here, $\Sb_0^h$ and $\Sb_1^h$ are approximations of $\Sb_0$ and $\Sb_1$ and specified 
for our transmission problem below. It should be emphasised that the gradient 
does not necessarily vanish on $\partial D$. 

We now have gathered all ingredients to state the improved variable metric algorithm.\\[1ex]
\begin{algorithm}[H]
    
    
    
    \KwData{Let $n=0$, $\gamma>0$, $\sigma>0$ and $N\in \N$ be given. Initialise domain $\Omega_0\subset D$ and time $t_n=0$.}
    
    
    initialization\;
    
    \While{ $n  \le  N $}{
        1.) solve \eqref{eq:gradient_discrete} to obtain  $ \nabla^\sigma J^h = \nabla^\sigma J^h(\Omega_{n})$\;
        2.) decrease $t >0$ until 
        $ J^h((\textsf{id}-t\nabla^\sigma J^h )(\Omega_n)) < J^h(\Omega_n) $\\ 
        \qquad and set $\Omega_{n+1} \gets (\textsf{id}-t\nabla^\sigma J^h)(\Omega_n)$\;
        \eIf{ $J^h(\Omega_n) - J^h(\Omega_{n+1}) \ge \gamma (J^h(\Omega_0) - J^h(\Omega_1))$ }{
            step accepted: continue program\;   
        }{
        decrease $\sigma \gets q \sigma$, $q\in (0,1)$\;
    }
    increase   $n \gets n+1$\;     
}

\caption{Variable metric algorithm.} \label{eq:algo_euler-2}

\end{algorithm}

\begin{remark}
    Algorithm \ref{eq:algo_euler-2} represents a new type of algorithm 
    for shape optimisation since it includes a change of the metric during the optimisation 
    process.
\end{remark}

\subsubsection*{Numerical tests}
In Figure~\ref{fig:RK-shapes}, the results of Algorithm~\ref{eq:algo_euler-2} with
parameters $\sigma=10$, $\gamma=10^{-2}$, $q=0.5$, $f=1$, $\beta_1=1$, $\beta_2=0.5$ and the
gradient defined in \eqref{eq:gradient_discrete_sigma} are depicted for some selected iteration steps.
In the left picture the reproducing kernel associated to $\phi_1$ is chosen and in the
right picture, the kernel associated to $\phi_2$ is employed. The inital shape 
is a circle with radius 0.1 located in the left lower corner with center (0.15,0.15), see Figure~\ref{fig:shape-setup}. 
The optimal shapes are two discs located at (0.65,0.35) and (0.7,0.5) with radii 0.2 and 0.1, respectively.
They are thus located in the upper right corner of the domain and intersect each other. The reference 
function $u_d$ is the solution of the tranmission problem on the optimal domain depicted in Figure~\ref{fig:shape-setup} (right).

The evolutions of the shapes are quite similar but a closer inspection reveals that they are in fact not 
identical. As predicted, for initially large $\sigma$, the shape is only translated but not changed otherwise.
After several iterations, the location of the 
optimal shape is reached and $\sigma$ is successively reduced which enables the subsequent deformation of the shape. 
Eventually, the final shape is very well reconstructed although the initial shape was place quite far away from the optimum.
Additionally, Figure~\ref{fig:RK2-meshes} illustrates the computing mesh with $\phi_2$ for some iterations.
The convergence history for the two examined kernels is depicted in Figure~\ref{fig:errors} (left).

\begin{figure}
    \centering
    \includegraphics[width=0.49\linewidth]{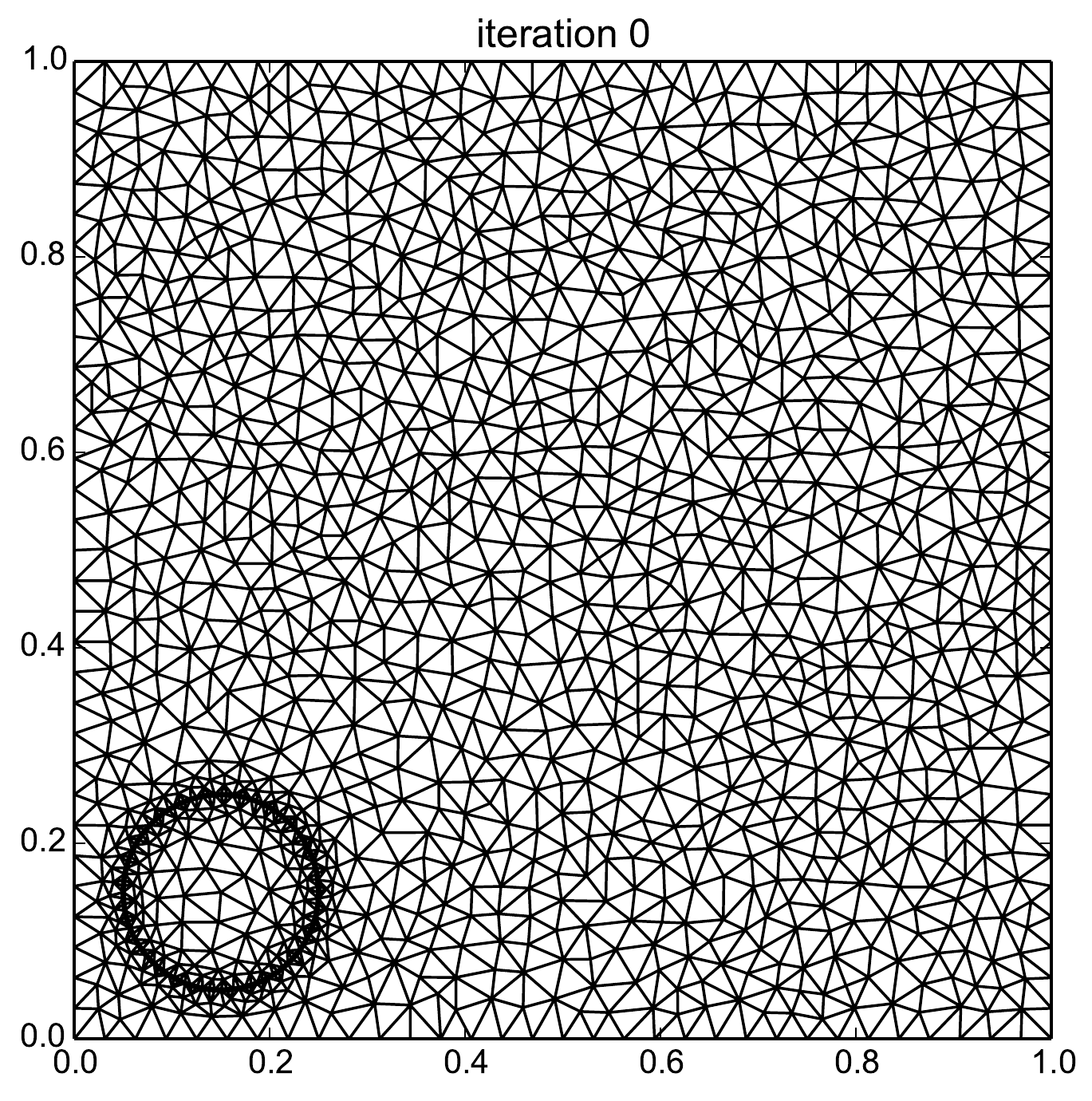}
    \includegraphics[width=0.49\linewidth]{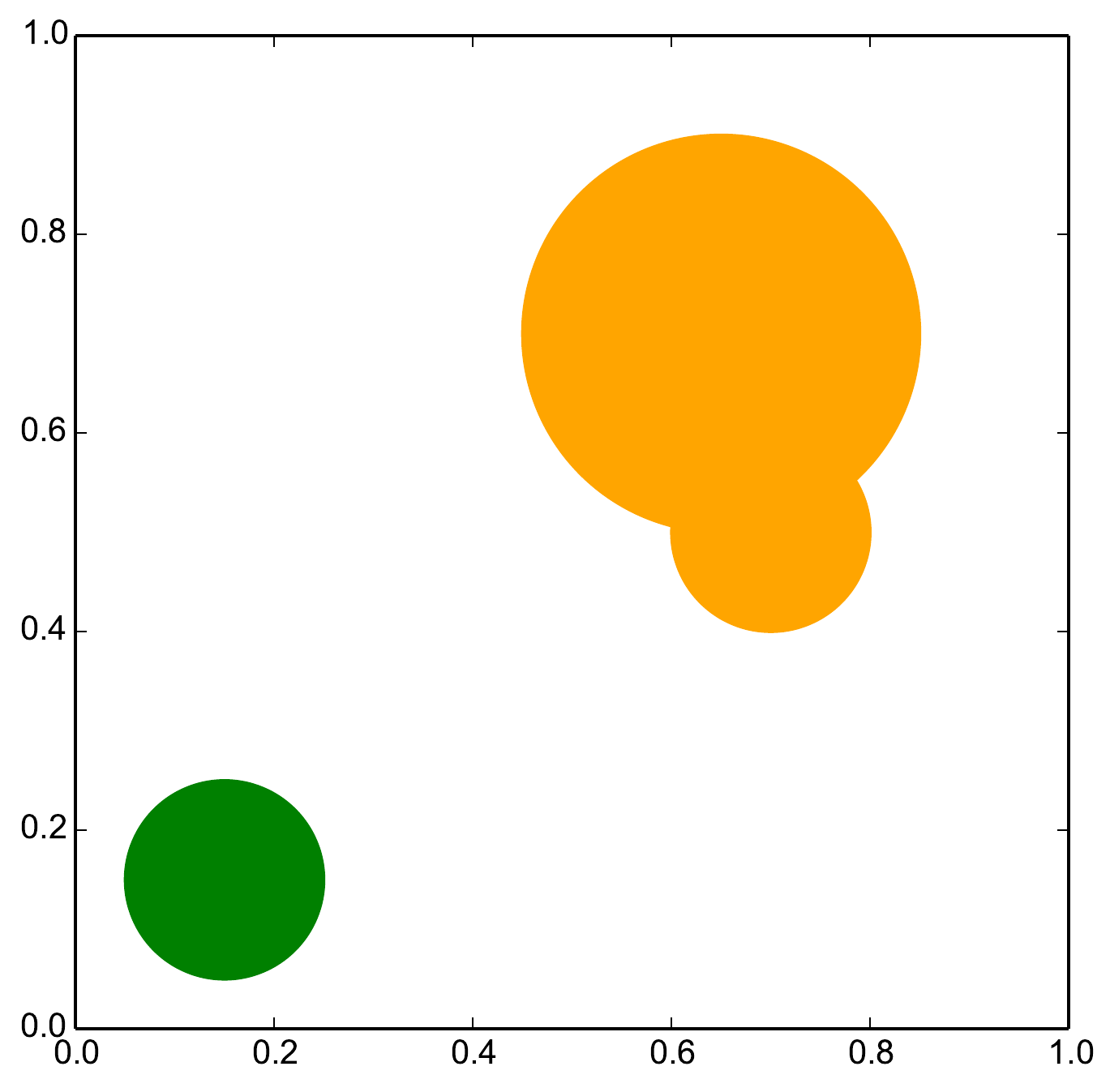}
    \caption{Inital mesh (left) and domain setup (right) with initial shape (bottom left) and optimal shape (top right).}
    \label{fig:shape-setup}
\end{figure}

In Figure~\ref{fig:H1-shapes}, results of Algorithm~\ref{eq:algo_euler} with 
the $H^1$ metric (left) and the Euclidean metric (right) are depicted. 
The gradient in the Euclidean metric is given by (cf. \eqref{eq:gradient_eulid})
\ben
\nabla J^h(\Omega) = \sum_{k=1}^{2N} dJ^h_{vol}(\Omega)(v^k) v^k
\een
and the $H^1$ gradient is defined as the solution of the variational problem
\ben
(\nabla J^h(\Omega), \varphi)_{H^1} = dJ^h_{vol}(\Omega)(\varphi) \quad \text{ for all } \varphi\in \mathcal V_{N_h}(\mathcal X,\R^2),
\een
where the space $\mathcal V_{N_h}(\mathcal X,\R^2)$ is given by \eqref{eq:V_N_h} with $\mathcal X=D$.
The inital shape is now placed very close to the optimal shape and even overlaps it. The reason is 
that both gradient methods, the Euclidean and the $H^1$, are not able to perform
large shape deformation and do do not converge when the inital shape is too far away.
For the Euclidean metric to converge, the initial shape actually has to lie basically inside the optimal shape.

Opposite to this, it poses no problem for our novel variable metrics algorithm which proves to be much more robust
in practise as demonstrated before.
We also point out that the reconstructions in Figure~\ref{fig:H1-shapes} 
are not as good as the previous ones in Figure~\ref{fig:RK-shapes}.
The convergence history for the $H^1$ and the Euclidean metric based optimisations is depicted in Figure~\ref{fig:errors} (right).

\begin{figure}
    \includegraphics[width=0.5\linewidth]{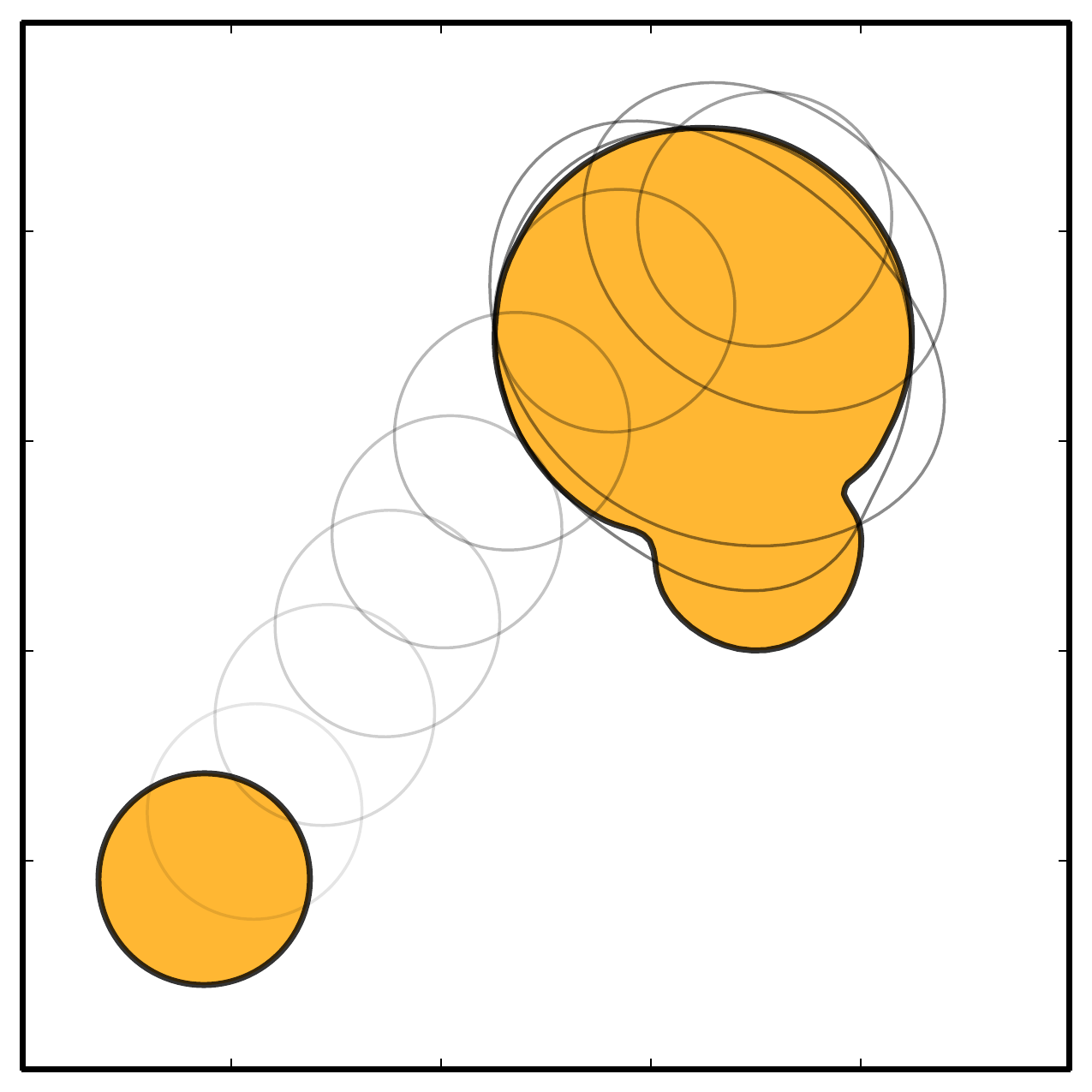}
    \includegraphics[width=0.5\linewidth]{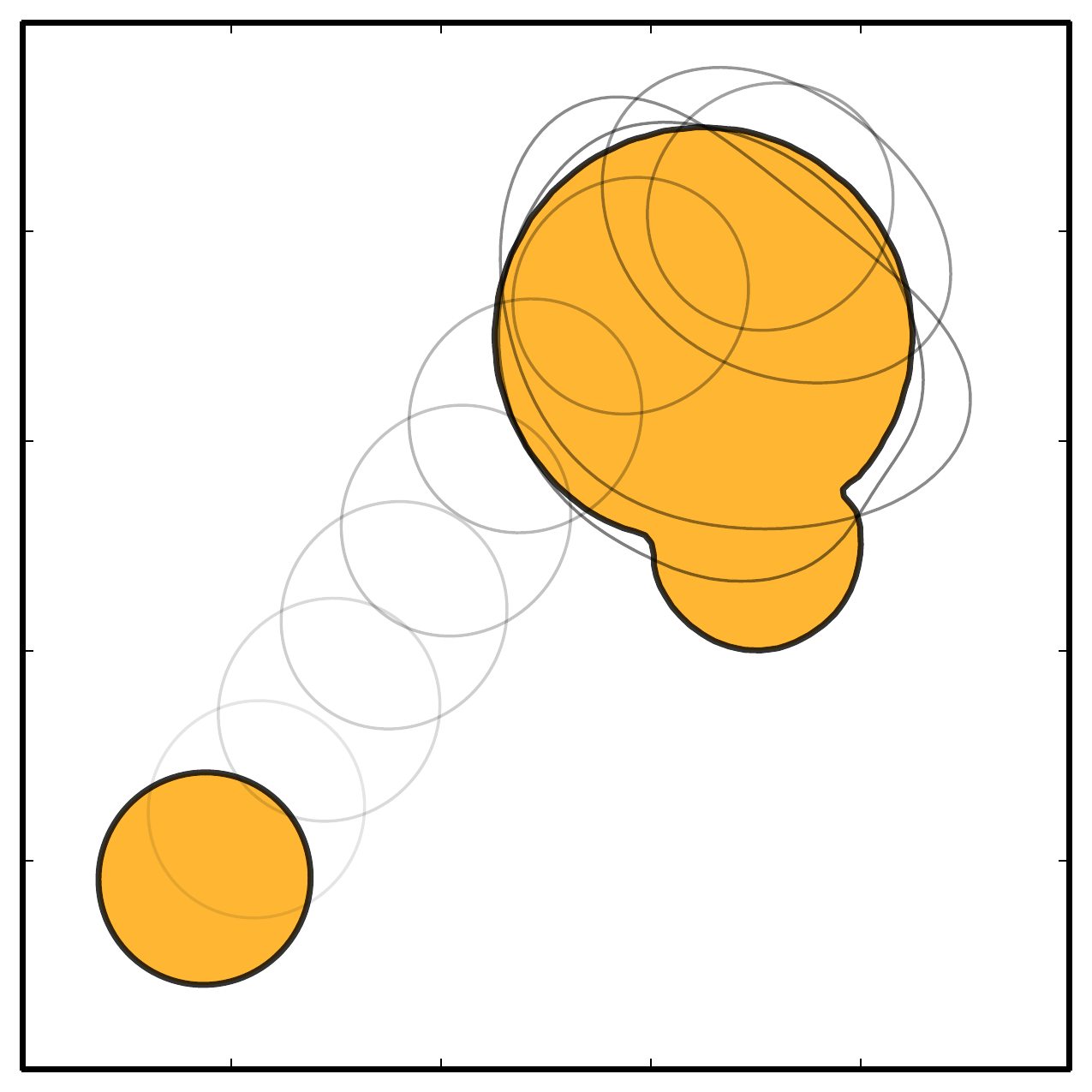}
    \caption{Shape progress for vvRKHS based optimisation with $\phi_1$ (left) and $\phi_2$ (right) for iterations 2, 5, 8, 11, 14, 17, 24, 30, 35, 50.}
    \label{fig:RK-shapes}
\end{figure}

\begin{figure}
    \includegraphics[width=0.24\linewidth]{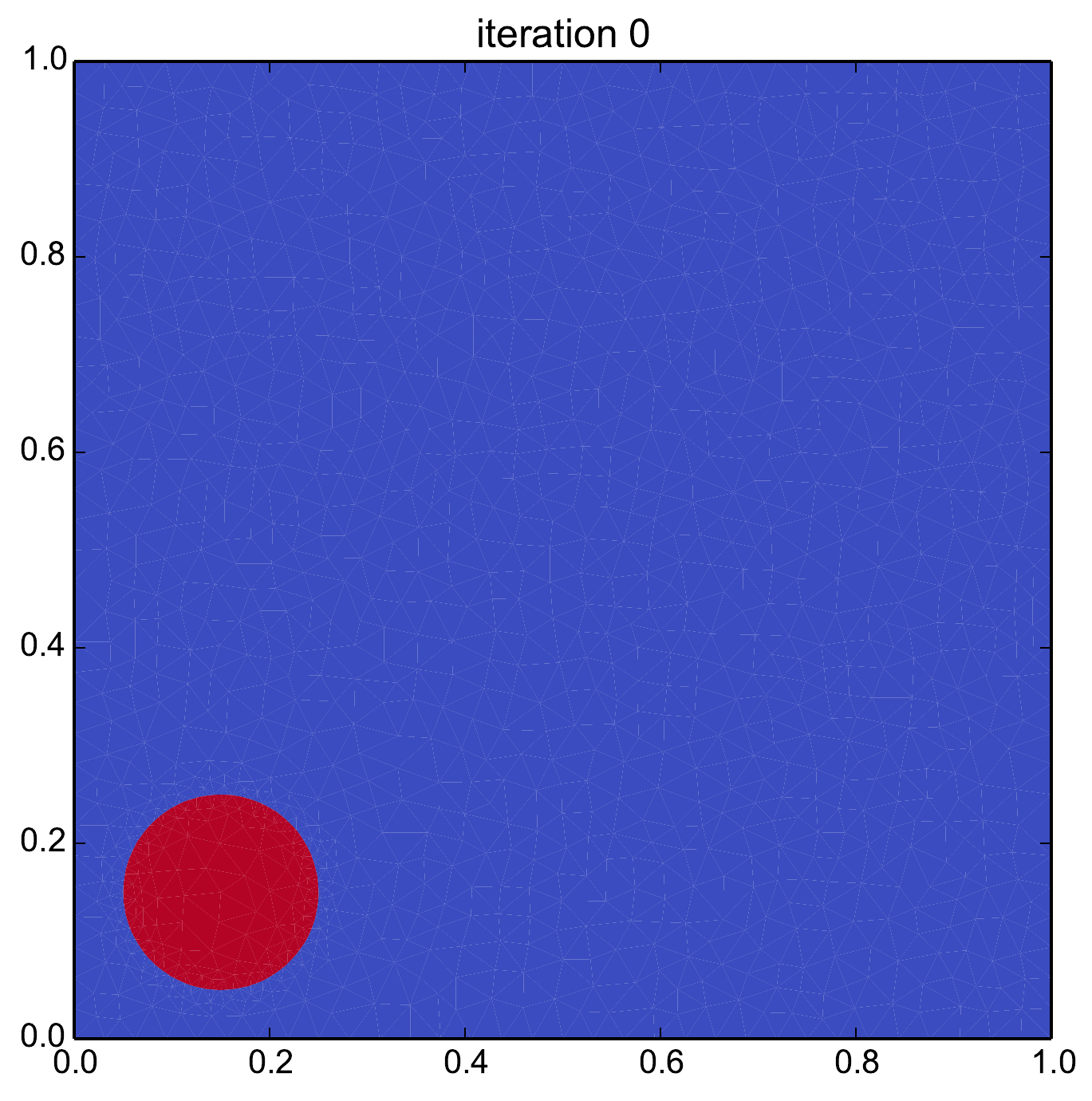}
    \includegraphics[width=0.24\linewidth]{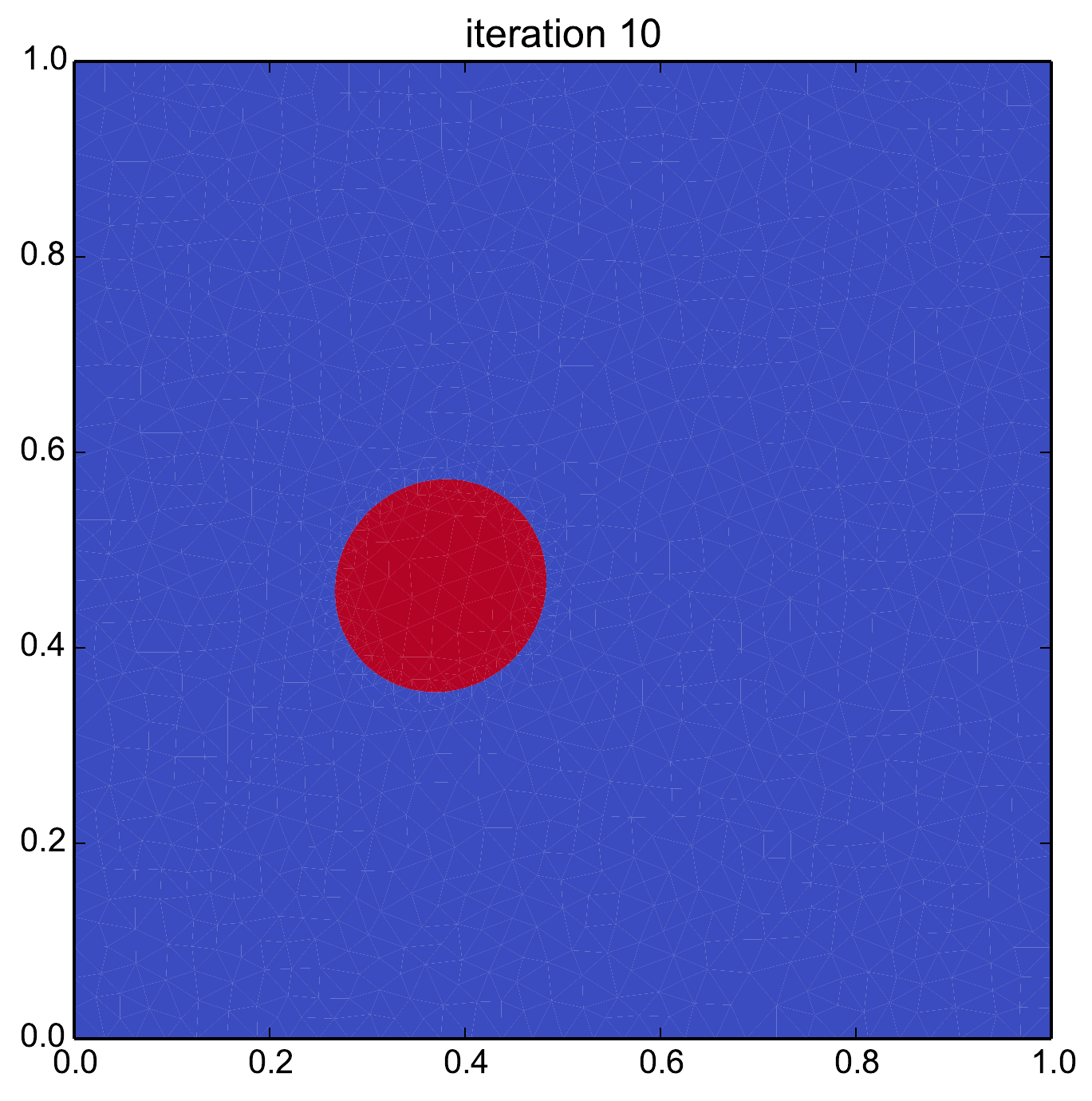}
    \includegraphics[width=0.24\linewidth]{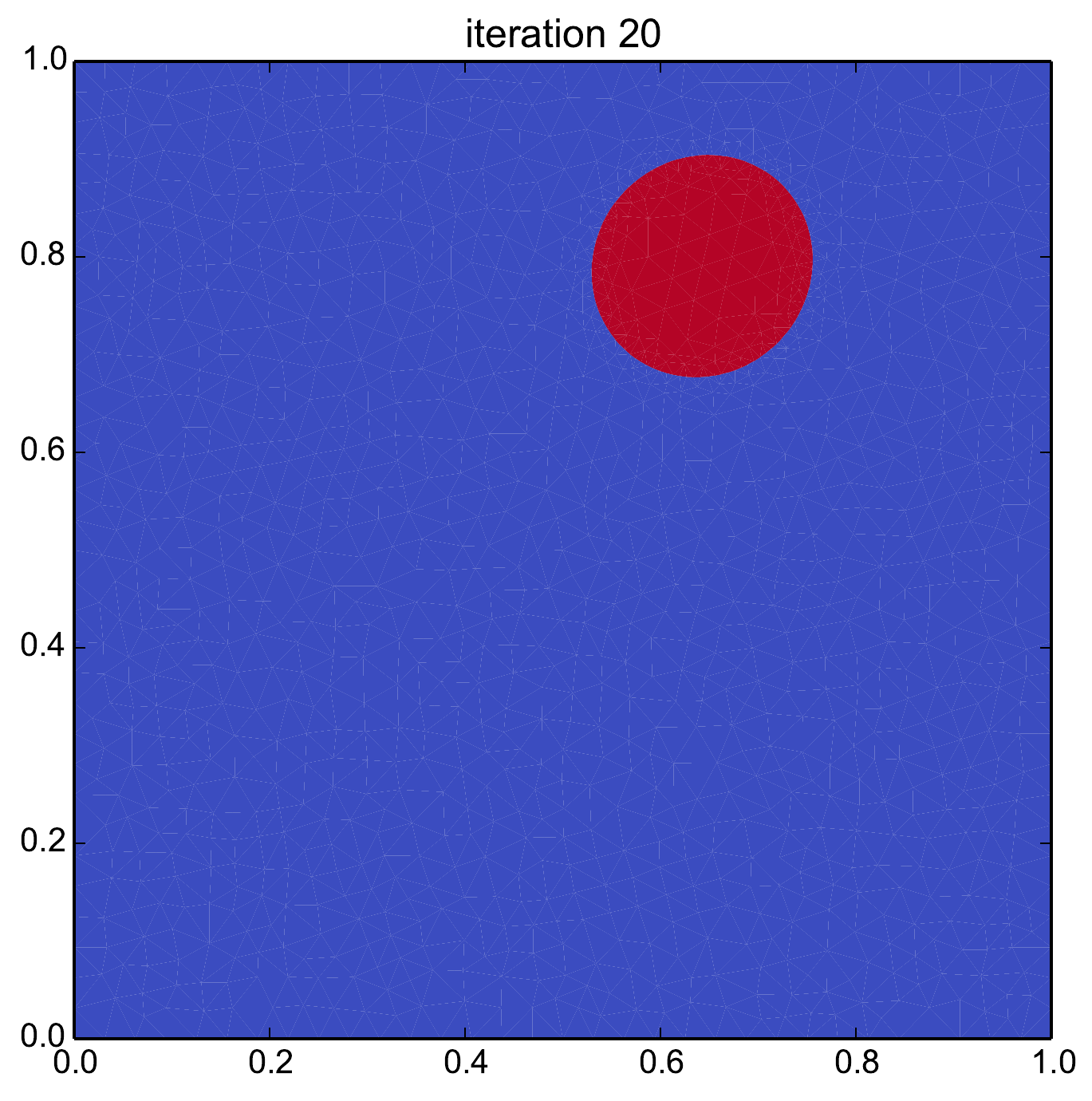}
    \includegraphics[width=0.24\linewidth]{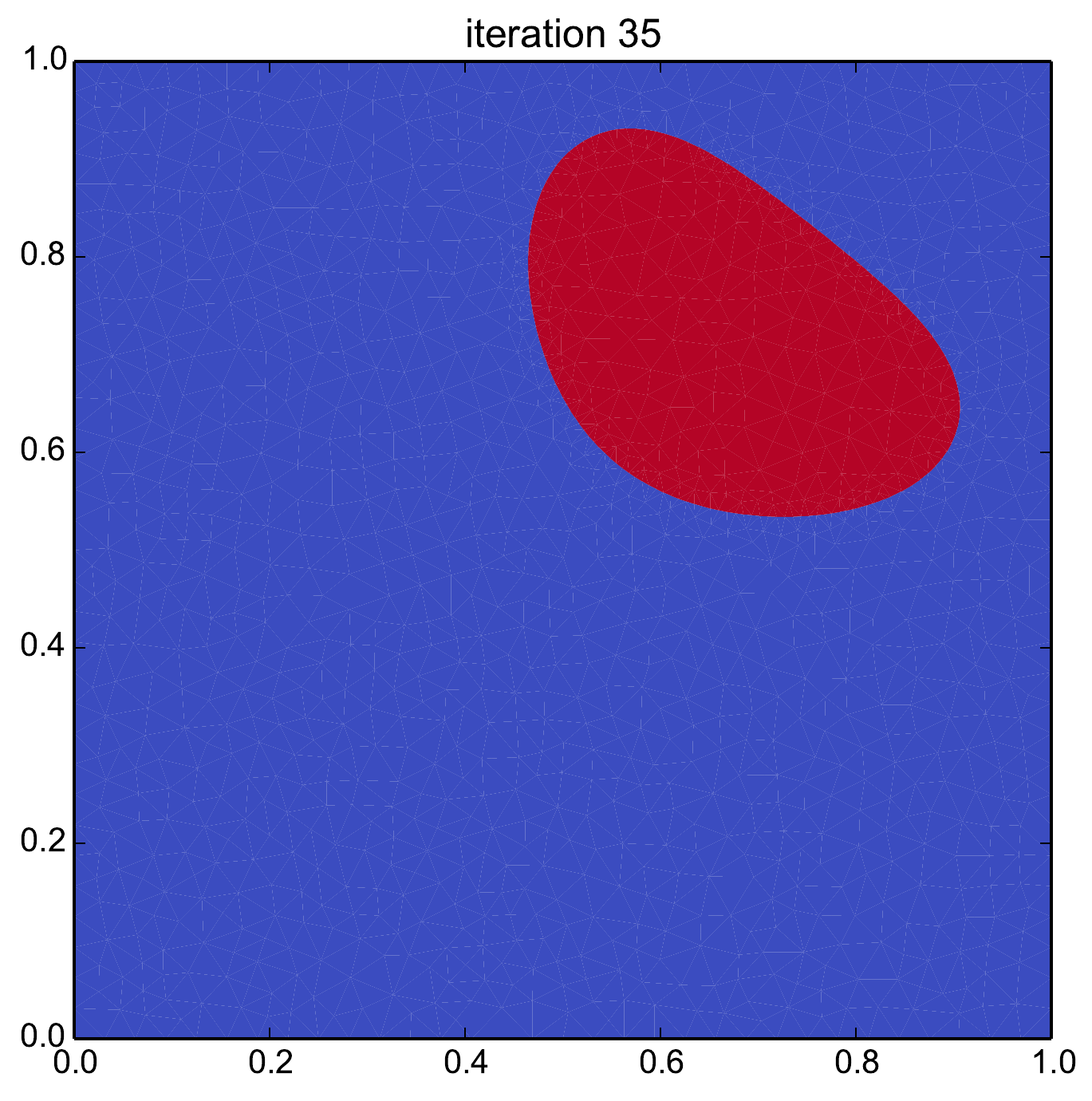}
    \includegraphics[width=0.24\linewidth]{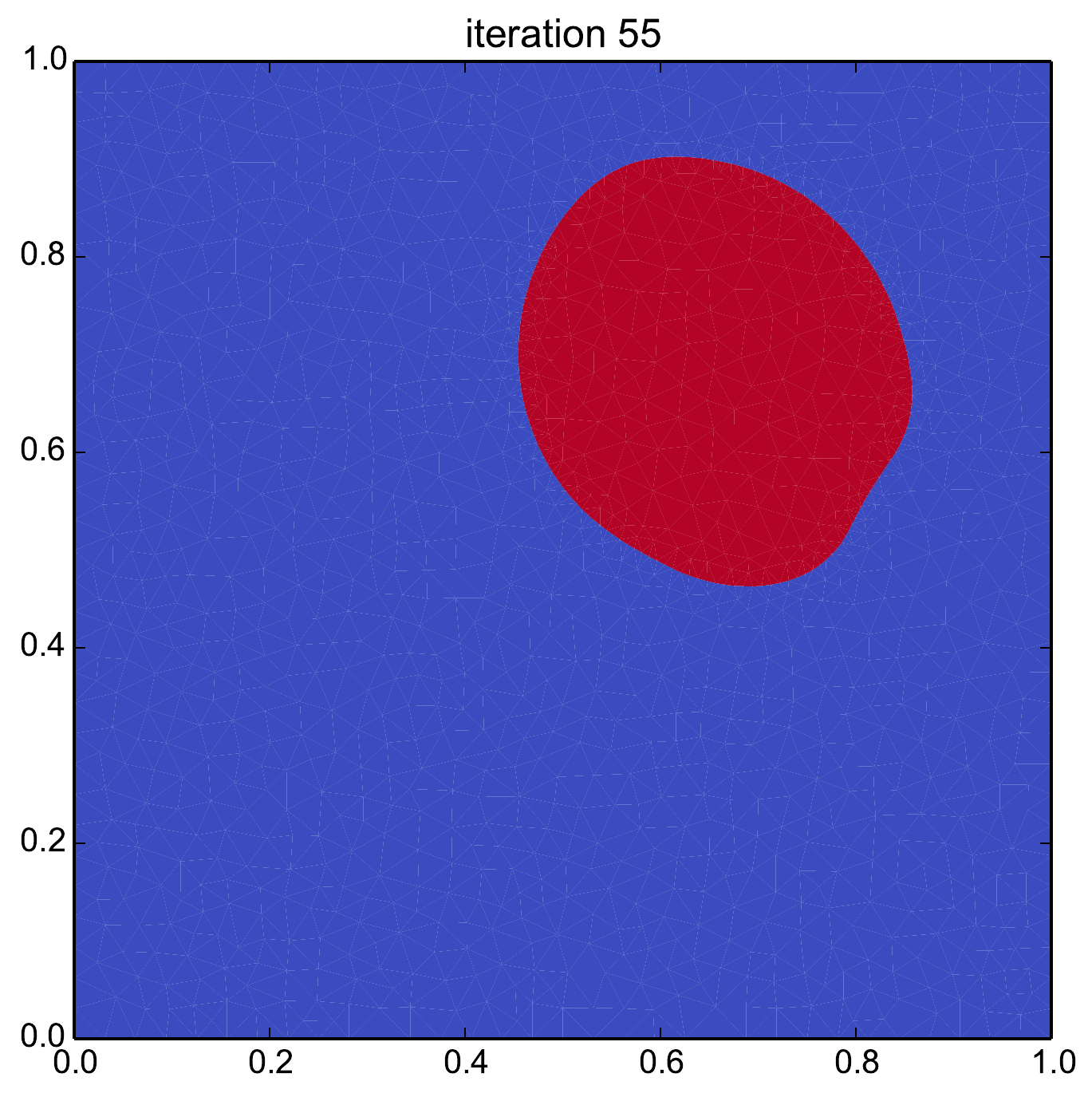}
    \includegraphics[width=0.245\linewidth]{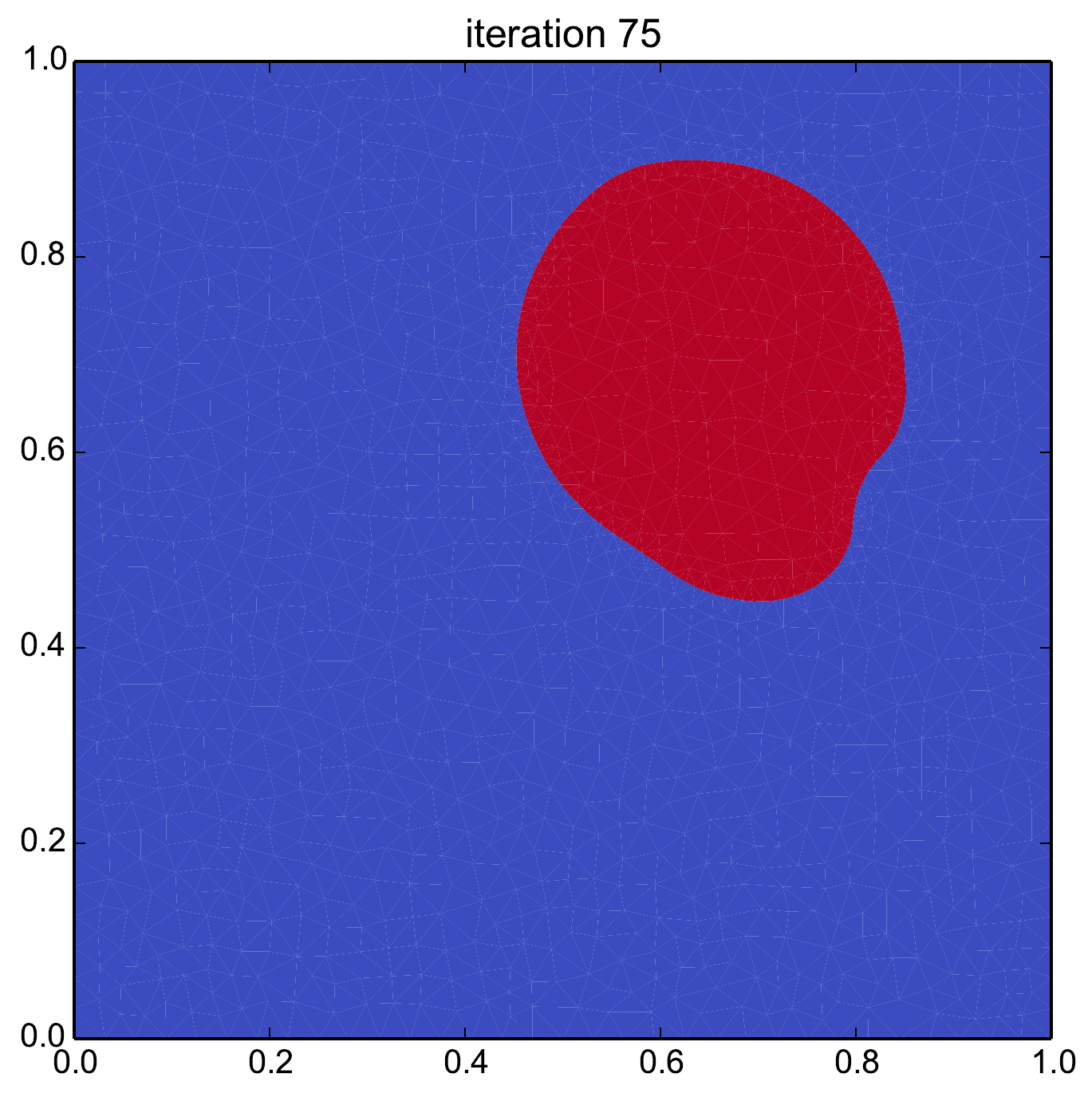}
    \includegraphics[width=0.245\linewidth]{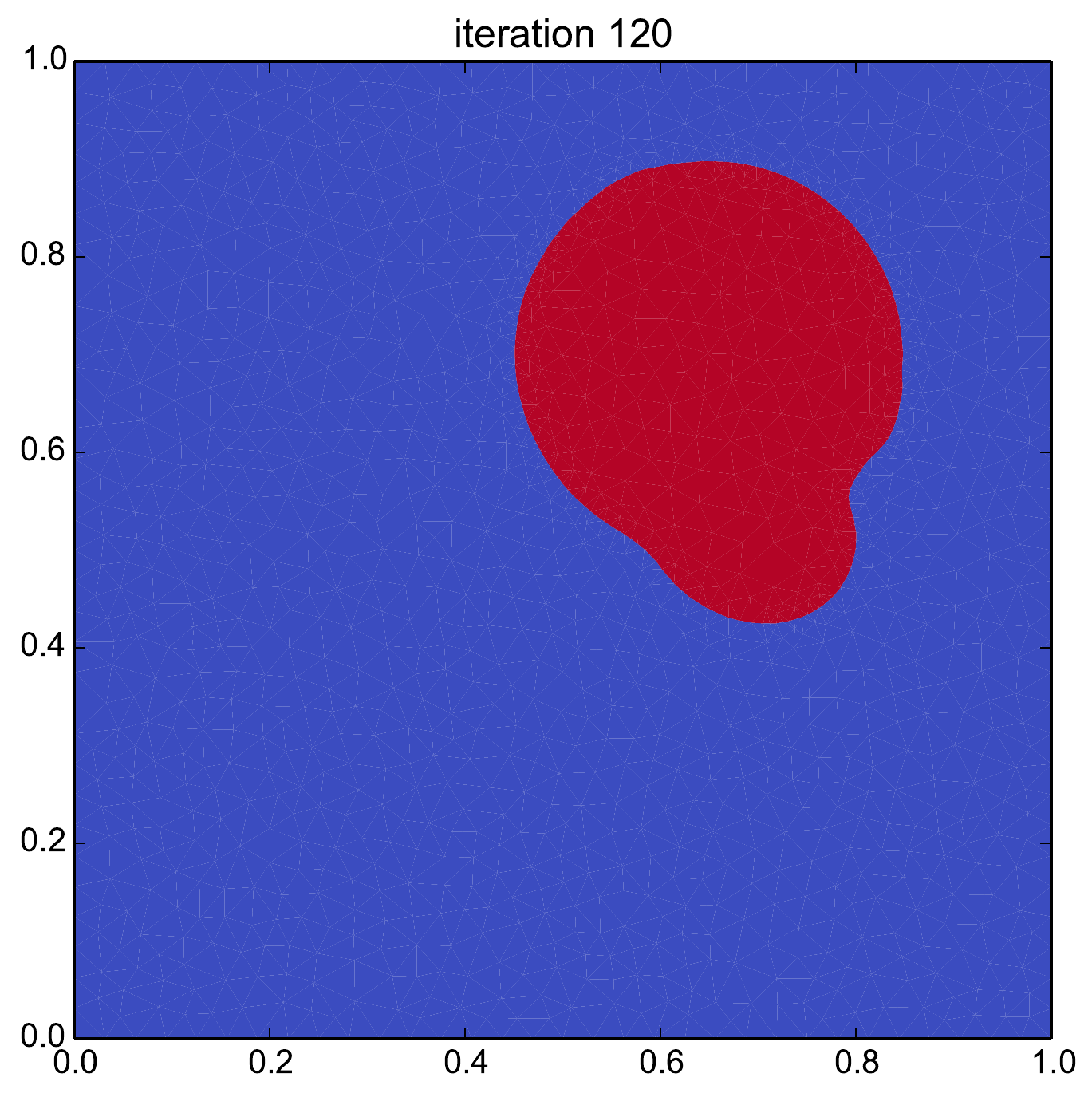}
    \includegraphics[width=0.24\linewidth]{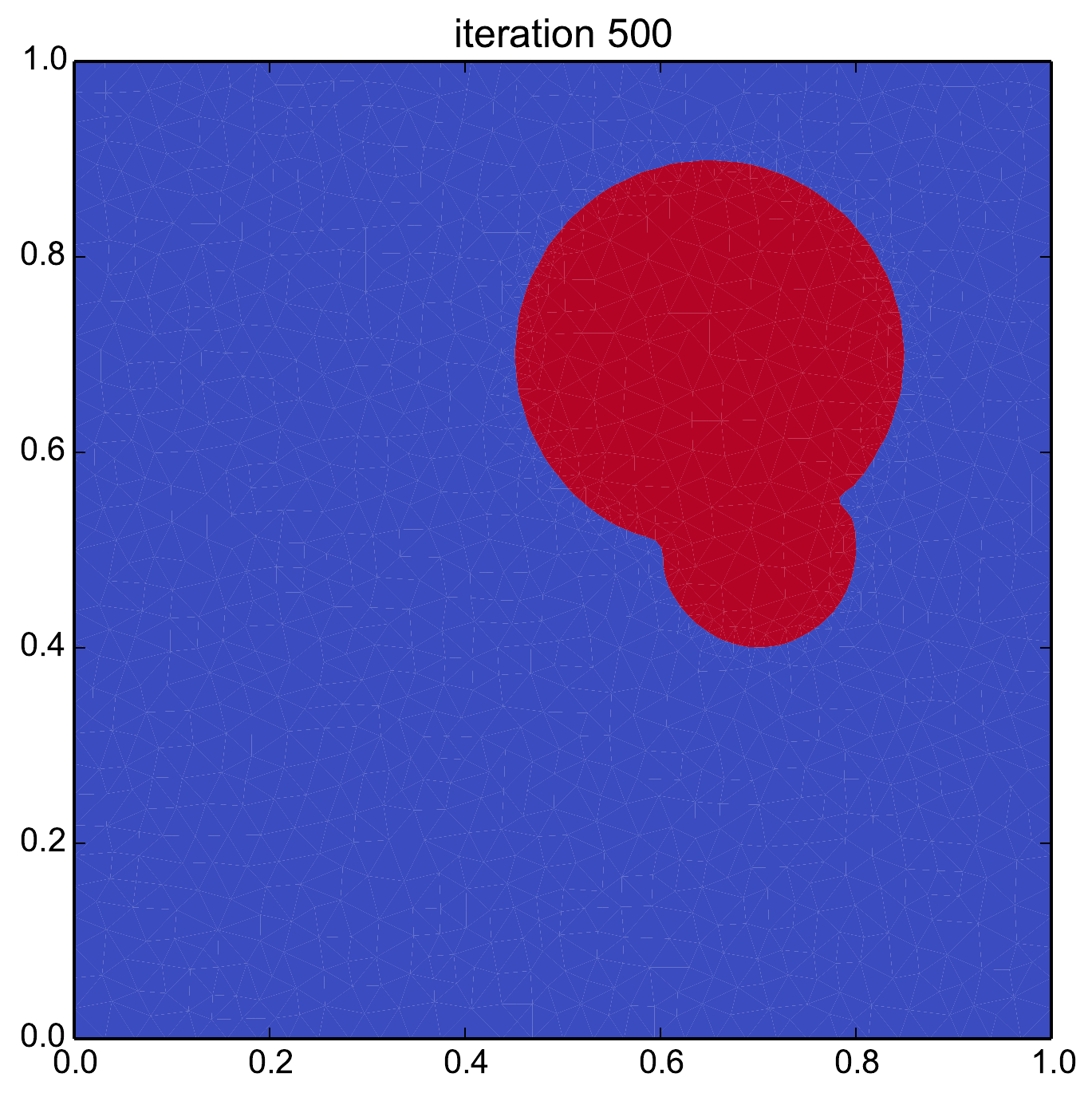}
    \caption{Successive meshes for iteration steps 0, 10, 20, 35, 55, 75, 120, 200, 500 of vvRKHS based optimisation with $\phi_2$.}
    \label{fig:RK2-meshes}
\end{figure}

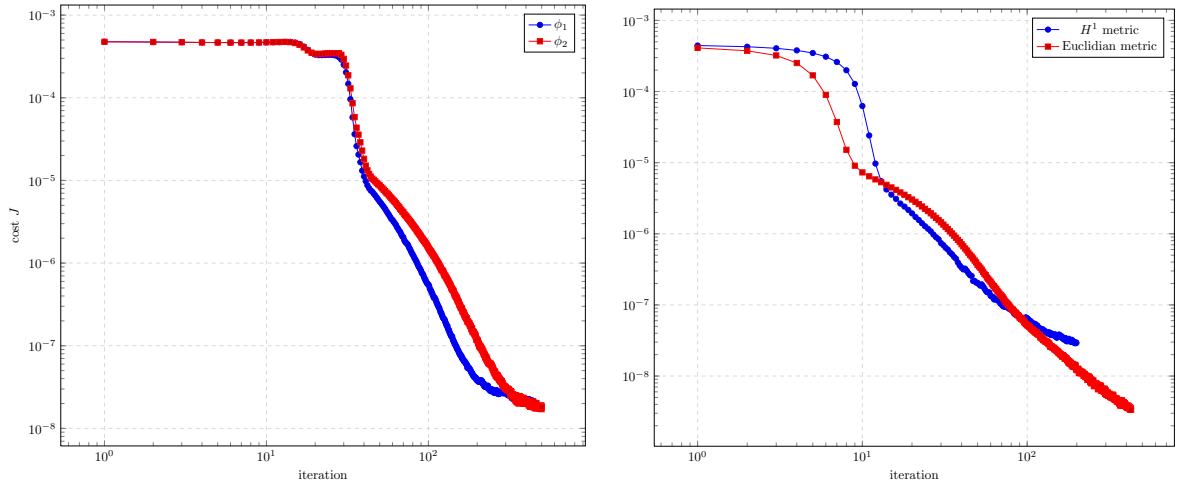
\begin{figure}[h!]
    \begin{center}
        \resizebox{0.50\textwidth}{!}{
            \begin{tikzpicture}
            \pgfplotstableread{data/RK1-RK-1-RKS-10.0-f-1-mN-30-iN-100-it-450-error.dat}\dataA
            \pgfplotstableread{data/RK2-RK-2-RKS-10.0-f-1-mN-30-iN-100-it-500-error.dat}\dataB
            \begin{loglogaxis}[
            width=\linewidth, 
            grid=major,
            grid style={dashed,gray!30},
            xlabel=iteration,
            ylabel=cost $J$
            ]
            \addplot table[x=iteration,y=error2] {\dataA}; \addlegendentry{$\phi_1$};
            \addplot table[x=iteration,y=error2] {\dataB}; \addlegendentry{$\phi_2$};
            \end{loglogaxis}
            \end{tikzpicture}
        }
        \hfill%
        \resizebox{0.48\textwidth}{!}{
            \begin{tikzpicture}
            \pgfplotstableread{data/pgfB-H1-RK--1-RKS-10.0-f-1-mN-30-iN-100-it-199-error.dat}\dataA
            \pgfplotstableread{data/pgfF3-RK--1-RKS-10.0-f-1-mN-100-iN-50-it-427-error.dat}\dataB
            \begin{loglogaxis}[
            width=\linewidth, 
            grid=major,
            grid style={dashed,gray!30},
            xlabel=iteration
            ]
            \addplot table[x=iteration,y=error2] {\dataA}; \addlegendentry{$H^1$ metric};
            \addplot table[x=iteration,y=error2] {\dataB}; \addlegendentry{Euclidian metric};
            \end{loglogaxis}
            \end{tikzpicture}
        }
        \caption{Error progress for vvRKHS based iterations (left), $H^1$ and Euclidian metric (right).}
        \label{fig:errors}
    \end{center}
\end{figure}

\begin{figure}
    \includegraphics[width=0.5\linewidth]{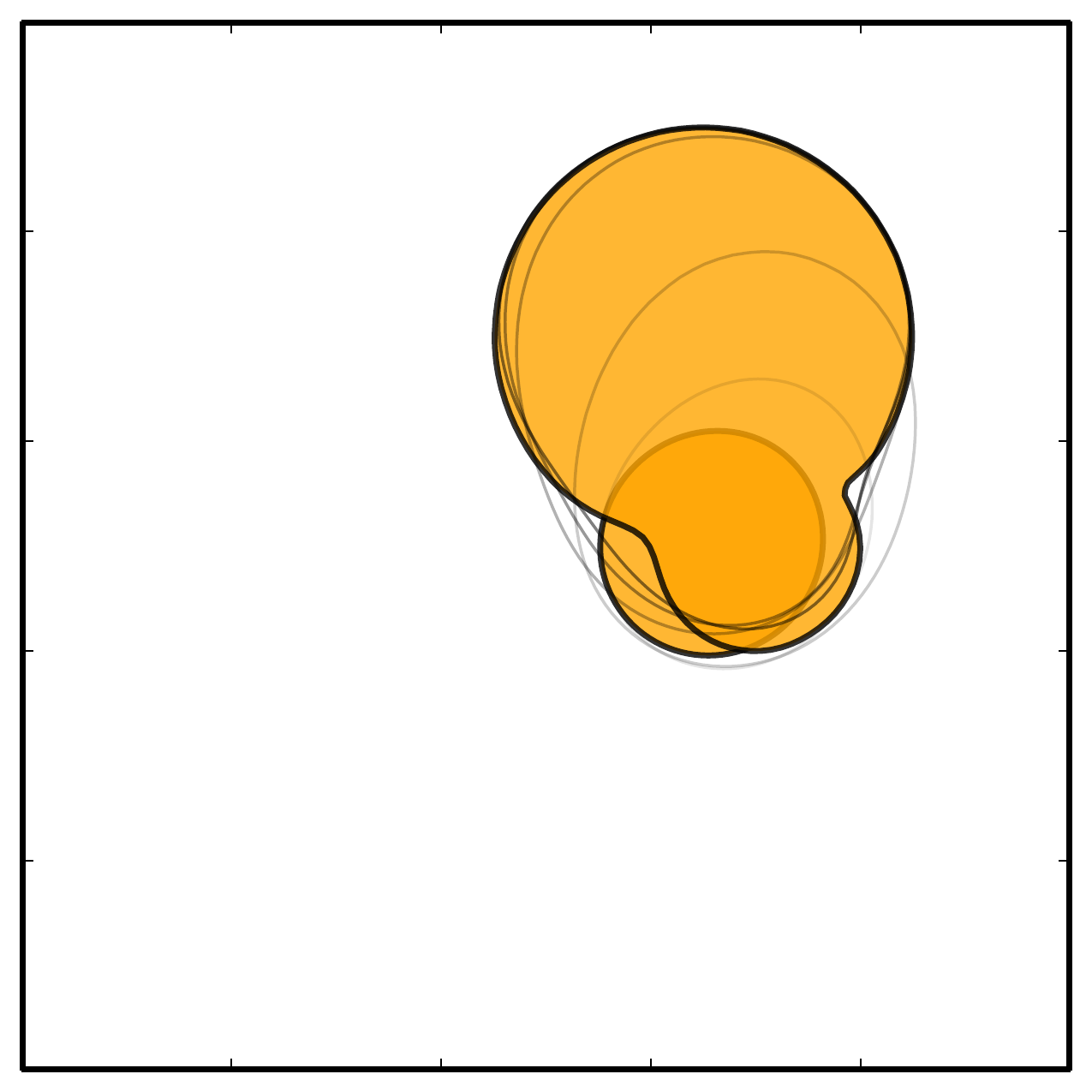}
    \includegraphics[width=0.5\linewidth]{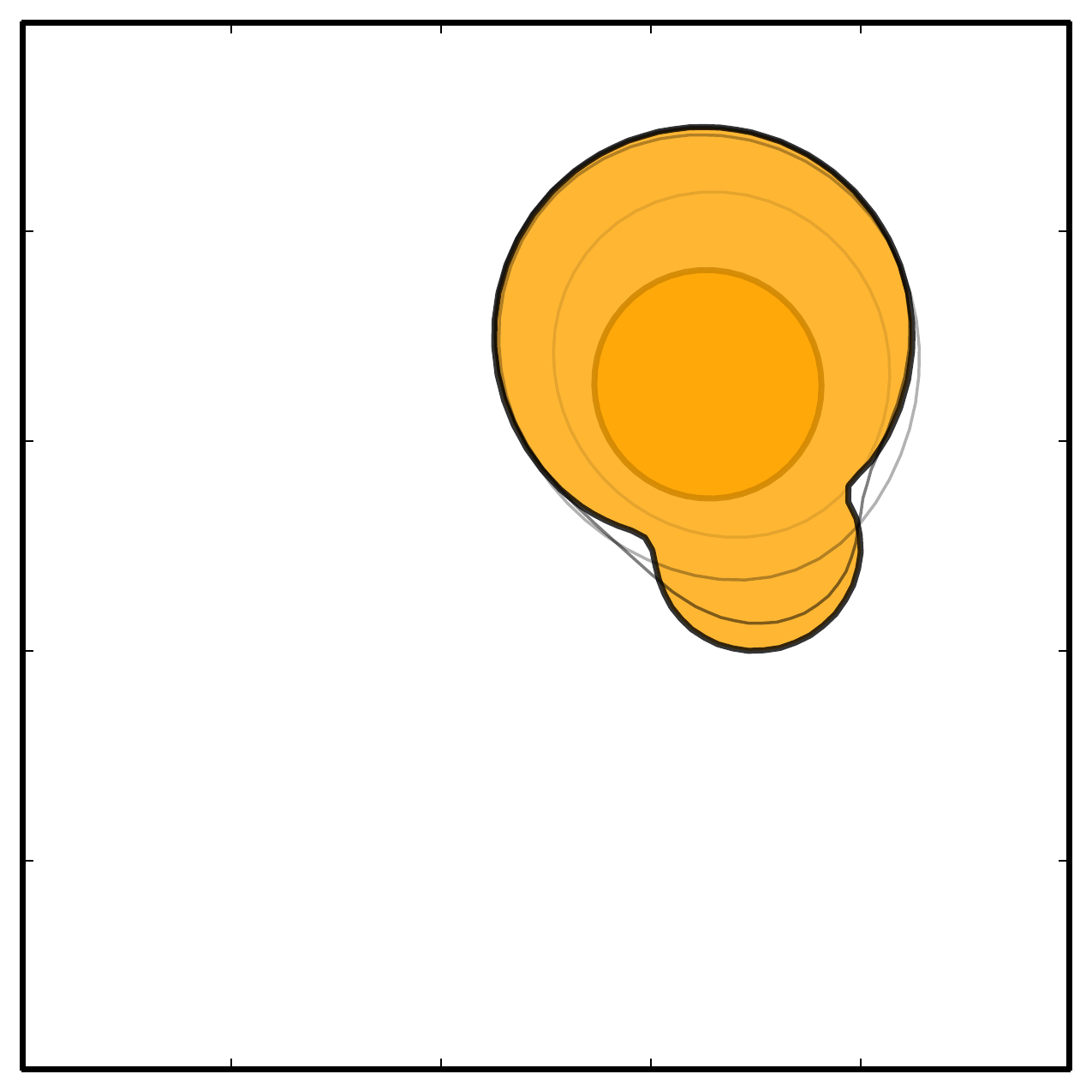}
    \caption{Shape progress for $H^1$ and Euclidian metric based optimisation.}
    \label{fig:H1-shapes}
\end{figure}

\section*{Conclusion}
We examined the applicability of RKHS in PDE constrained shape optimization.
In particular, we showed that many previously used gradient algorithms can be identified as 
methods using gradients computed in RKHS. We also investigated special radial kernels and proposed a 
new variable metrics algorithm which exhibits very promising behaviour in our experimental setting.
A comparison with other common methods shows that our 
method is much more robust when used with more complicated problems, namely when the distance to the optimal shape is
large and its regularity is reduced (examine the non-convex areas).
With the presented derivation and numerical demonstration of the new method, we only scratched the surface
of this promising approach to shape optimisation in RKHS and many highly interesting 
question remain open.
For instance, the ``optimal choice'' of the kernel for specific problems will be subject of future work.  

\bibliographystyle{plain}
\bibliography{references}

\end{document}